\documentclass[10pt,letterpapper]{amsart}

\usepackage{graphicx,hyperref}
\usepackage[hmargin=1in, vmargin=1in]{geometry}
\renewcommand{\theequation}{\thesection.\arabic{equation}}

\theoremstyle{plain}
\newtheorem*{main}{Main Theorem}
\newtheorem{thm}{Theorem}[section]
\newtheorem{lem}{Lemma}[section]
\newtheorem{prop}[lem]{Proposition}

\theoremstyle{definition}
\newtheorem{defn}{Definition}
\newtheorem{rmk}{Remark}

\renewcommand{\[}{\begin{equation}\notag\begin{aligned}}
\renewcommand{\]}{\end{aligned}\end{equation}}
\newcommand{\beq}[1]{\begin{equation}\label{#1}\begin{aligned}}
\newcommand{\beqeps}[1]%
  {\addtocounter{equation}{1}\begin{equation}\label{#1}\tag{$\theequation\epsilon$}\begin{aligned}}

\newcommand{\p}{\begin{pmatrix}}
\newcommand{\pp}{\end{pmatrix}}

\newcommand{\din}{\mathrm{in}}
\newcommand{\dout}{\mathrm{out}}

\newcommand{\myflow}[1]{\underset{\eqref{#1}}{\bullet}}

\author{Ting-Hao Hsu}
\address{Department of Mathematics\\ The Ohio State University\\ Columbus, OH 43210}
\email{hsu.296@osu.edu}
\title[Viscous singular shock profiles for two-phase flow]%
{Viscous singular shock profiles for a system of\\
conservation laws modeling two-phase flow}

\keywords{
Conservation laws;
Singular shocks;
Viscous profiles;
Dafermos regularization;
Geometric Singular Perturbation Theory.
}
\subjclass[2010]{35L65, 35L67, 34E15, 34C37}

\begin{document}

\maketitle
\begin{center}
\today
\end{center}
\begin{abstract}
This paper is concerned with singular shocks
for a system of conservation laws modeling incompressible two-phase fluid flow.
We prove the existence of viscous profiles
using the geometric singular perturbation theory.
Weak convergence and growth rates of the unbounded family of solutions are also obtained.
\end{abstract}

\section{Introduction} \label{sec_intro}
Keyfitz et al \cite{Keyfitz:2003,Keyfitz:2004}
considered the system of conservation laws \beq{claw_bv}
  &\beta_t+(vB_1(\beta))_x=0\\
  &v_t+(v^2B_2(\beta))_x=0
\] where $t\ge0$, $x\in\mathbb R$, $v\in\mathbb R$, $\beta\in [\rho_1,\rho_2]$
with $\rho_2<\rho_1$ and \beq{def_B}
  B_1(\beta)= \frac{(\beta-\rho_1)(\beta-\rho_2)}\beta,\quad
  B_2(\beta)= \frac{\beta^2-\rho_1\rho_2}{2\beta^2}.
\]
For Riemann problems with data in feasible regions,
they constructed uniquely defined admissible solutions.
It can be readily shown that this system is not everywhere hyperbolic,
and hence standard methods does not apply
(see e.g. \cite{Smoller:1983,Dafermos:2010}).
To resolve this problem,
along with rarefaction waves and regular shocks,
the concept of singular shocks was adopted.
A singular shock solution,
roughly speaking,
is a distribution which contains delta measures
and is the weak limit of a sequence of approximate viscous solutions.
For details of the definition, we refer to \cite{Sever:2007, Keyfitz:2011}.

The existence of singular shocks for \eqref{claw_bv}
was proved in \cite{Keyfitz:2004}.
In that work, 
for certain Riemann data \beq{ic_riemann_bv}
  (\beta,v)(x,0)
  = \begin{cases}
    (\beta_{L},v_{L}),&x<0\\
    (\beta_{R},v_{R}),&x>0
  \end{cases}
\]
approximate solutions of
the regularized system via Dafermos regularization \beqeps{dafermos_bv}
  &\beta_t+(vB_1(\beta))_x=\epsilon t\beta_{xx}\\
  &v_t+(v^2B_2(\beta))_x=\epsilon t v_{xx}
\]
were constructed.
A family of exact solutions of \eqref{dafermos_bv} and \eqref{ic_riemann_bv},
rather than approximate solutions,
is called a \emph{viscous profile}.
In this paper, we prove existence of viscous profile,
also we give descriptions of their limiting behavior
including weak convergence and growth rates.
The main tool in our study is the \emph{Geometric Singular Perturbation Theory} (GSPT),
which will be introduced in later sections.
The use of this tool on singular shocks was
first introduced in the pioneering work of Schecter \cite{Schecter:2004}.

The system \eqref{claw_bv} is equivalent to
a two-fluid model for incompressible two-phase flow \cite[p.248]{Drew:1999}
of the form \beq{claw_alpha}
  &\partial_t(\alpha_i)+\partial_x(\alpha_iu_i)=0\\
  &\partial_t(\alpha_i\rho_iu_i)+\partial_x(\alpha_i\rho_iu_i^2)+\alpha_i\partial_xp_i= F_i,
  \quad i=1,2,
\] where the drag terms $F_i$ are neglected
and the pressure terms satisfy $p_1=p_2$.
To reduce \eqref{claw_alpha} to \eqref{claw_bv},
in \cite{Keyfitz:2003} the volume fractions $\alpha_1$ and $\alpha_2=1-\alpha_1$
have been replaced by
a density-weighted volume element
$\beta=\rho_2\alpha_1+\rho_1\alpha_2$
and the momentum equations replaced by a single equation for
the momentum difference
$v=\rho_1u_1-\rho_2u_2-(\rho_1-\rho_2)K$,
where $K=\alpha_2u_1+\alpha_2u_2$ is taken to be zero.
This is a simple example of continuous model for two-phase flow,
but it shares with other continuous models the property of
changing type -- that is, it is not hyperbolic for some (in this case, most) states.

The purpose of this study is to shed light on the mathematical properties
of the change-of-type system
that appear in continuous models of two-phase flow.
The original studies \cite{Keyfitz:2003,Keyfitz:2004}
showed the existence of self-similar solutions
with reasonable properties.
Specifically, the singular shocks that appear can be considered to be propagating phase boundaries.
In this paper,
we focus on viscous profiles of singular shocks
and unveil some of their limiting behavior.

In Section \ref{sec_main},
we state our main result,
and in Section \ref{sec_assumptions}
the validity of the assumptions of the theorem is discussed,
with some proofs for the sufficient conditions postponed to Section \ref{sec_h3}.
In Section \ref{sec_GSPT},
we recall and enhance some tools in GSPT, 
including Fenichel's Theorems and the Exchange Lemma.
Section \ref{sec_singular_config}
is devoted to describing the structure of the system.
The proof of the main theorem is completed in Section \ref{sec_complete},
and numerical simulations are shown in Section \ref{sec_simulation}.

\section{Main Result}\label{sec_main}
In standard notation for conservation laws,
we write \eqref{claw_bv} as
\beq{claw_u}
  u_t+f(u)_x=0,
\] where $u=(\beta,v)$,
and write Riemann data for Riemann problems
in the form \beq{bc_riemann}
  u(x,0)= u_L+ (u_R-u_L) \mathrm{H}(x),
\]
where $\mathrm{H}(x)$ is the step function taking value $0$ if $x<0$; $1$ if $x>0$.

We study the systems that approximate \eqref{claw_u}
via the Dafermos regularization: \beqeps{dafermos_u}
  u_t+f(u)_x=\epsilon t u_{xx}
\] for small $\epsilon> 0$.
Using the self-similar variable $\xi = x/t$,
the system is converted to \beqeps{dafermos_similar}
  -\xi \frac{d}{d\xi} u+ \frac{d}{d\xi}\big(f(u)\big)
  = \epsilon \frac{d^2}{d\xi^2}u,
\] and the initial condition \eqref{bc_riemann} becomes \beq{bc_u_infty}
  u(-\infty)=u_L,\; u(+\infty)=u_R.
\]
The system \eqref{dafermos_similar} is equivalent to \beqeps{dafermos_similar_xi}
  &-\epsilon u_\xi= f(u)- \xi u- w\\
  &w_\xi= - u\\
\] or, up to a rescaling of time, \beqeps{sf_u}
  &\dot u= f(u)- \xi u- w\\
  &\dot{w}= -\epsilon u\\
  &\dot\xi= \epsilon.
\] The time variable in \eqref{sf_u}
is implicitly defined by the equation of $\dot\xi$.
When $\epsilon=0$, \eqref{sf_u} is reduced to \beq{fast_u}
  &\dot u= f(u)-\xi u- w\\
  &\dot{w}= 0,\;
  \dot\xi= 0.
\]
Returning to the $(\beta,v)$ notation,
the system \eqref{sf_u} is written as \beqeps{sf_bv}
  &\dot\beta= -B_1(\beta)v- \xi\beta- w_1\\
  &\dot v= -B_2(\beta)v^2- \xi v- w_2\\
  &\dot{w}_1= -\epsilon\beta\\
  &\dot{w}_2= -\epsilon v\\
  &\dot\xi= \epsilon,
\] and \eqref{fast_u} becomes \beq{fast_bv}
  &\dot\beta= -B_1(\beta)v- \xi\beta- w_1\\
  &\dot v= -B_2(\beta)v^2- \xi v- w_2\\
  &\dot{w}_1= 0,\;
  \dot{w}_2= 0,\;
  \dot\xi= 0.
\]
The linearization
at any equilibrium $(\beta,v,w_1,w_2,\xi)$ for \eqref{fast_bv}
has eigenvalues $\lambda_{\pm}(\beta,v)-\xi$,
where \beq{def_lambda}
  \lambda_{\pm}(u)
  = 2vB_2(\beta)\pm v\sqrt{B_1(\beta)B_2'(\beta)}.
\] Note that $\mathrm{Re}(\lambda_\pm(u))=2vB_2(\beta)$
since $B_1(\beta)B_2'(\beta)\le 0$
when $\rho_2\le\beta\le\rho_1$.
Moreover,
the system is nonhyperbolic everywhere in the physical region
except on the union of the lines $\{\beta=\rho_1\}$, $\{\beta=\rho_2\}$, and $\{v=0\}$.

An \emph{over-compressive shock region}
is a region where the condition $\mathrm{(H1)}$
defined below holds.
It was shown in \cite{Keyfitz:2004}
that any data in an over-compressive shock region
admits a singular shock solution,
and the shock speed $s$ is defined by \eqref{def_s} below.
Our main theorem 
confirms Dafermos profiles
in a subset of this region.

\begin{main}
Consider the Riemann problem \eqref{claw_bv}, \eqref{ic_riemann_bv}.
Let $u_L=(\beta_L,v_L)$ and $u_R=(\beta_R,v_R)$
be two points in $[\rho_1,\rho_2]\times (0,\infty)$ with $\beta_R\ne \beta_L$,
and let \begin{align}
  &s=\frac{v_LB_1(\beta_L)-v_RB_1(\beta_R)}{\beta_L-\beta_R}\label{def_s}\\
  &w_L=f(u_L)-su_L,\;
  w_R=f(u_R)-su_R\label{def_w}\\
  &e_0=w_{2L}-w_{2R}\label{def_deficit}
\end{align} where we denote $w_L=(w_{1L},w_{2L})$ and $w_R=(w_{1R},w_{2R})$.
Assume \begin{enumerate}
  \item[$\mathrm{(H1)}$] $\mathrm{Re}(\lambda_\pm(u_R))<s <\mathrm{Re}(\lambda_\pm(u_L))$,
  where $\lambda_\pm(u)$ are defined in \eqref{def_lambda}.
  \item[$\mathrm{(H2)}$] $e_0>0$.
  \item[$\mathrm{(H3)}$] For the system \eqref{fast_bv},
  there exists a trajectory
  joining $(\beta_L,v_L,w_{L},s)$
  and  $(\rho_1,+\infty,w_{L},s)$,
  and a trajectory joining $(\beta_R,v_R,w_{R},s)$
  and  $(\rho_2,+\infty,w_{R},s)$.
\end{enumerate}
Then there is a singular shock with Dafermos profile
for the Riemann data $(u_L,u_R)$.
That is, for each small $\epsilon>0$,
there is a solution $\tilde{u}^\epsilon(\xi)$ of \eqref{dafermos_similar} and \eqref{bc_u_infty},
and $\tilde{u}_\epsilon(\xi)$ becomes unbounded as $\epsilon\to 0$.
Indeed, \beq{est_v_max}
  \max_\xi \Big(\epsilon\log \tilde{v}_\epsilon(\xi)\Big)
  =  \frac{(\rho_1-\rho_2)\,(w_{2L}-w_{2R})}{\rho_1+\rho_2}+ o(1)
  \quad\text{as }\epsilon\to 0.
\] Moreover,
if we set $u_\epsilon(x,t)=\tilde{u}_\epsilon(x/t)$,
then $u_\epsilon(x,t)$ is a solution of \eqref{dafermos_u}
and \begin{subequations}\label{weak_limit}
  \begin{align}
  &\beta_\epsilon\rightharpoonup \beta_L+ (\beta_R-\beta_L) \mathrm{H}(x-st)
    \label{weak_limit_beta}\\
    &v_\epsilon\rightharpoonup v_L+ (v_R-v_L) \mathrm{H}(x-st)
    + \frac{e_0}{\sqrt{1+s^2}}t\delta_{\{x=st\}}
    \label{weak_limit_v}
  \end{align}
\end{subequations}
in the sense of distributions as $\epsilon\to 0$.
\end{main}

The trajectories in $\mathrm{(H3)}$ are illustrated in Fig \ref{fig_H3}.

\begin{figure}[t]\centering
\includegraphics[trim = 1cm 6.8cm 2cm 6cm, clip, width=.49\textwidth]{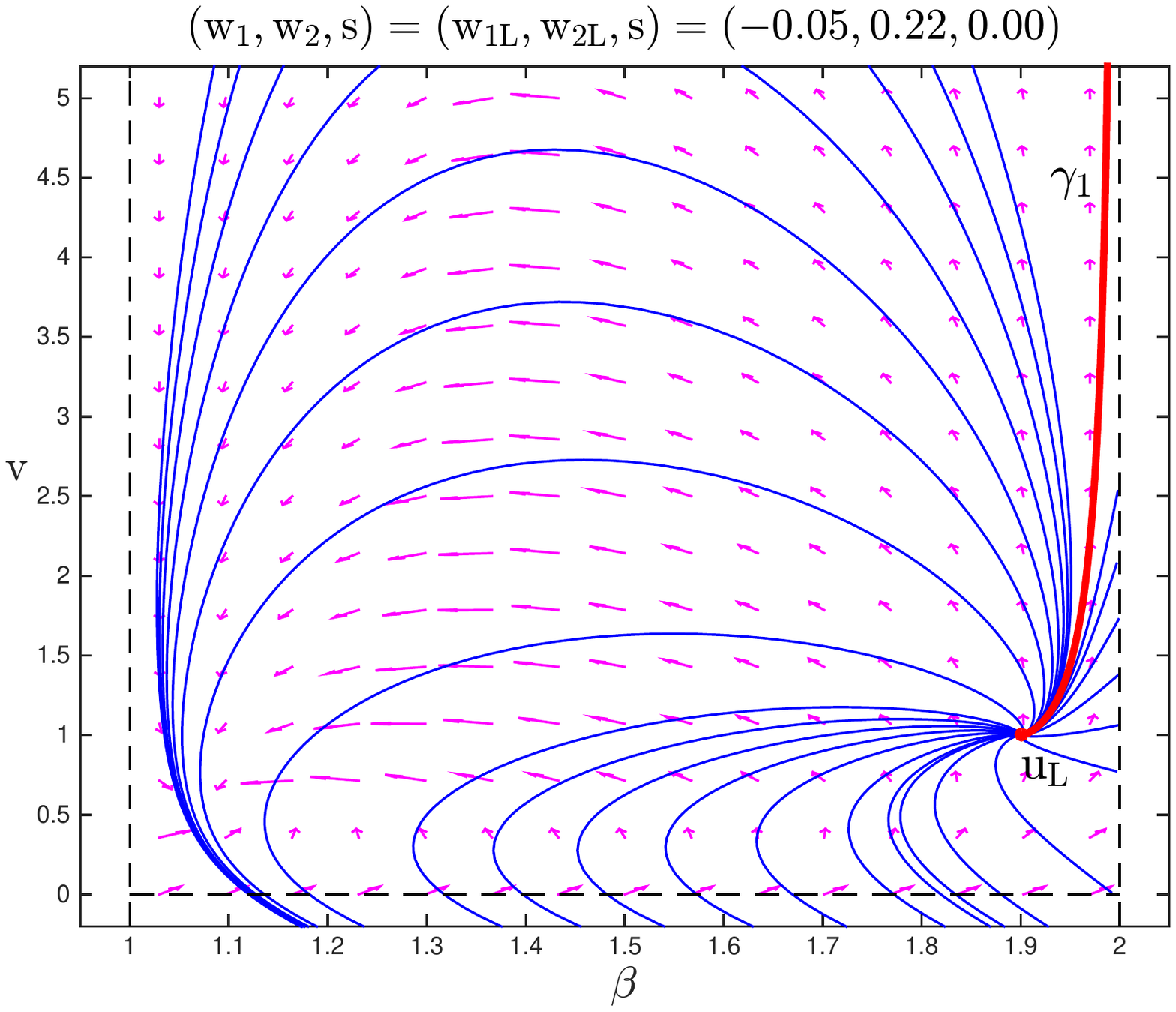}
\includegraphics[trim = 1cm 6.8cm 2cm 6cm, clip, width=.49\textwidth]{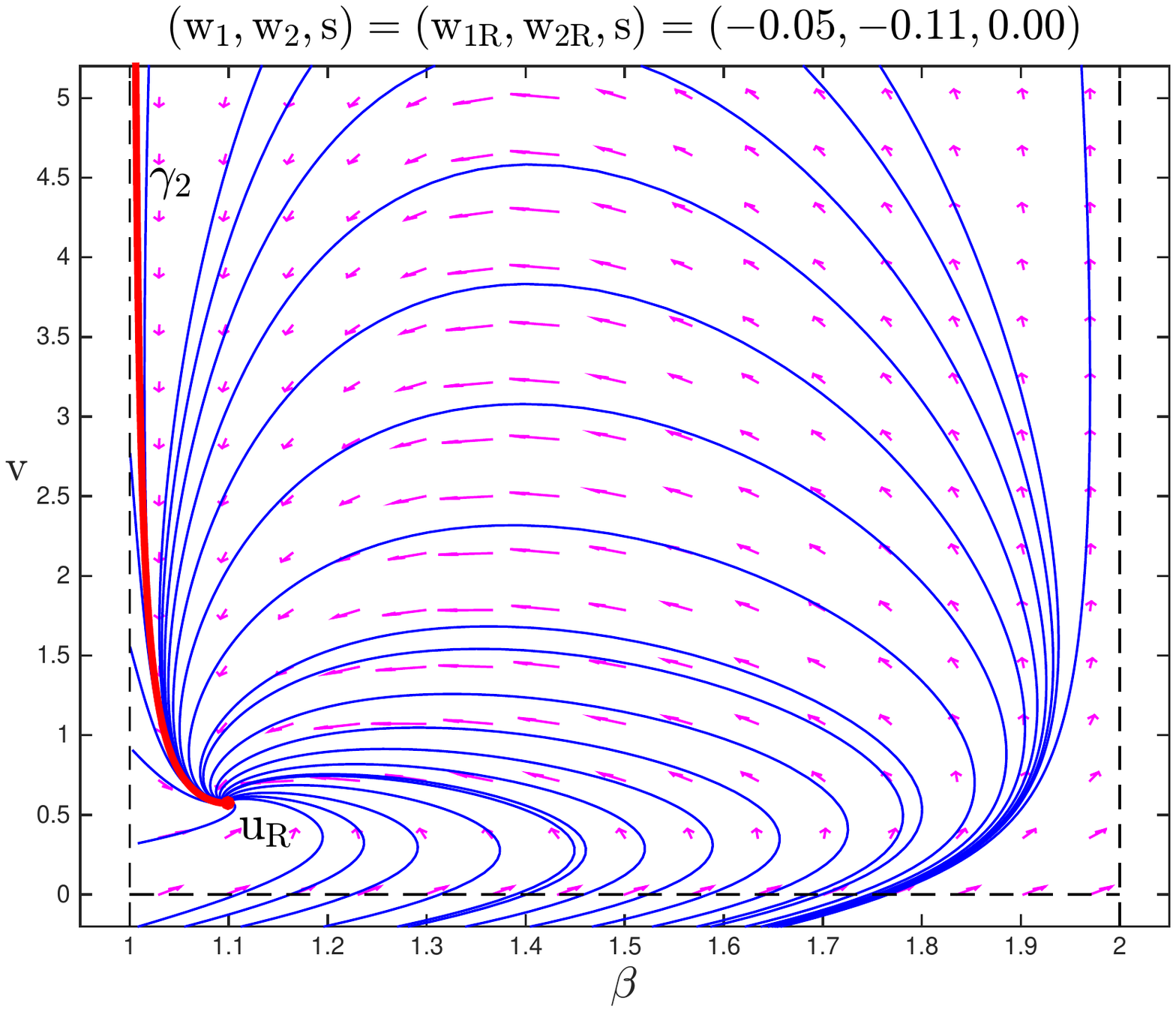}
\caption{Phase portraits for $\dot{u}=f(u)-su-w$ with fixed $s$ and $w$.
The singular trajectories in $\mathrm{(H3)}$
are denoted by $\gamma_1$ and $\gamma_2$.}
\label{fig_H3}
\end{figure}

\begin{rmk}
A similar result holds if $v_L<0$ and $v_R<0$.
In that case,
the assumption $e_0>0$ in $\mathrm{(H2)}$ is replaced by $e_0<0$,
and $+\infty$ in $\mathrm{(H3)}$ is replaced by $-\infty$.
\end{rmk}

The notation $t\delta_{\{x=st\}}$
in \eqref{weak_limit_v} denotes,
following \cite{Tan:1994,Chen:2003},
the functional on $C^\infty_c(\mathbb R\times\mathbb R^+)$
defined by \beq{def_delta}
  \langle t\delta_{\{x=st\}},\varphi\rangle
  = \int_0^\infty t\varphi(st,t)\sqrt{1+s^2}\;dt.
\] The weight $\sqrt{1+s^2}$ in the integral
is to normalize the functional
so that it is independent of parametrization of the line $\{x=st\}$.

The estimate \eqref{est_v_max} confirms the asymptotic behavior
conjectured in \cite{Keyfitz:2003}.
In \cite{Keyfitz:2004},
some approximate solutions for the Dafermos regularization were constructed,
but they were not exact solutions to \eqref{dafermos_similar}.
The results in the main theorem
can also be compared to \cite{Schecter:2004} and \cite{Keyfitz:2012},
where Dafermos profiles were constructed for a system motivated by gas dynamics.
Those authors obtained families of unbounded solutions to \eqref{dafermos_similar},
but they did not give descriptions of asymptotic behaviors the of solutions.

The assumption $\mathrm{(H3)}$
says that there exist solutions of \eqref{fast_bv} of the form \beq{def_gamma}
  \gamma_1=\big(\beta_1(\xi),v_1(\xi),w_{1L},w_{2L},s\big),\;
  \gamma_2=\big(\beta_2(\xi),v_2(\xi),w_{1R},w_{2R},s\big)
\] satisfying \beq{limit_gamma1}
  \lim_{\xi\to-\infty}(\beta_1(\xi),v_1(\xi))= (\beta_L,v_L),\quad
  \lim_{\xi\to\infty}(\beta_1(\xi),v_1(\xi))= (\rho_1,+\infty)
\] and \beq{limit_gamma2}
  \lim_{\xi\to-\infty}(\beta_2(\xi),v_2(\xi))= (\rho_2,+\infty),\quad
  \lim_{\xi\to\infty}(\beta_2(\xi),v_2(\xi))= (\beta_R,v_R).
\] A local analysis for \eqref{fast_bv}
with $(w,\xi)=(w_{L},s)$ and $(w_{R},s)$, respectively,
at $(\rho_1,+\infty)$ and $(\rho_2,+\infty)$
shows that the trajectories in $\mathrm{(H3)}$, if they exist, are unique.

A sample set of data for which ($\mathrm{H}$1)-($\mathrm{H}$3) holds is,
following \cite{Keyfitz:2003}, \beq{sample_data}
  \rho_1=2,\;
  \rho_2=1,\;
  u_L=(1.9,1.0),\;
  u_R=(1.1,1.1/1.9).
\] This will be verified in the next subsection.

\section{Sufficient Conditions for $\mathrm{(H1)}$-$\mathrm{(H3)}$} \label{sec_assumptions}
The regions at which $\mathrm{(H1)}$ holds,
or the over-compressive shock regions,
can be described by the following

\begin{prop}
\label{prop_overcompressive}
In the Riemann problem \eqref{claw_bv}, \eqref{bc_riemann},
let $u_L=(\beta_L,v_L)$ and $u_R=(\beta_R,v_R)$
be two points in $[\rho_1,\rho_2]\times (0,\infty)$.
Then $\mathrm{(H1)}$ holds if and only if $u_R$
lies in the interior of a cusped triangular region
bounded by the curves \beq{curve_s_lambda_m}
  v= v_L\left(\frac{B_1(\beta_L)-2B_2(\beta_L)(\beta_L-\beta)}{B_1(\beta)}\right),\;
  \rho_2<\beta_R<\beta_L,
\]
and \beq{curve_s_lambda_p}
  v= v_L\left(\frac{B_1(\beta_L)}{B_1(\beta)+2B_2(\beta)(\beta_L-\beta)}\right),\;
  \rho_2<\beta_R<\beta_L.
\]
On the boundary segment \eqref{curve_s_lambda_m}, $s=\mathrm{Re}(\lambda_{\pm}(u_L))$,
and on  \eqref{curve_s_lambda_p}, $s=\mathrm{Re}(\lambda_{\pm}(u_R))$.
\end{prop}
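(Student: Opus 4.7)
The proof is essentially algebraic, and the key reduction is that, by \eqref{def_lambda} and the fact that $B_1(\beta)B_2'(\beta)\le 0$ on the physical interval, we have $\mathrm{Re}(\lambda_\pm(u)) = 2vB_2(\beta)$. Hence $\mathrm{(H1)}$ is equivalent to the pair of strict inequalities $2v_R B_2(\beta_R) < s < 2v_L B_2(\beta_L)$. My plan is to convert each inequality into an equality to obtain the two boundary curves, perform a sign analysis to determine the correct side of each, and finally describe the geometric shape of the resulting region.

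First I would substitute $s=2v_LB_2(\beta_L)$ into the Rankine--Hugoniot identity \eqref{def_s}. The resulting equation is linear in $v_R$ and, after rearrangement, yields exactly the curve \eqref{curve_s_lambda_m}; the symmetric substitution $s=2v_RB_2(\beta_R)$ produces \eqref{curve_s_lambda_p}. This computation simultaneously proves the second statement of the proposition, identifying $s$ with $\mathrm{Re}(\lambda_\pm(u_L))$ on the first boundary and with $\mathrm{Re}(\lambda_\pm(u_R))$ on the second. Evaluating either formula at $\beta_R=\beta_L$ returns $v=v_L$, so the two curves share the point $(\beta_L,v_L)$; computing $dv_R/d\beta_R$ there shows they are tangent, accounting for the cusp. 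As $\beta_R\to \rho_2^+$ the factor $B_1(\beta_R)$ in the denominator of \eqref{curve_s_lambda_m} vanishes, driving that branch to infinity, while \eqref{curve_s_lambda_p} stays finite because its denominator $B_1(\beta_R)+2B_2(\beta_R)(\beta_L-\beta_R)$ retains a nonzero contribution from $B_2(\rho_2)\ne 0$; this asymmetry supplies the third (open) side of the cusped triangle.

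To fix the orientation I would differentiate \eqref{def_s} in $v_R$, obtaining $\partial s/\partial v_R = -B_1(\beta_R)/(\beta_L-\beta_R)$, which is positive on $\rho_2<\beta_R<\beta_L$ since $B_1<0$ there. Therefore the inequality $s<2v_LB_2(\beta_L)$ corresponds to $v_R$ lying on the determined side of \eqref{curve_s_lambda_m}. A parallel analysis, comparing the slope of the affine map $v_R\mapsto s$ with that of $v_R\mapsto 2v_RB_2(\beta_R)$, pins down the correct side of \eqref{curve_s_lambda_p}. The intersection of the two correct half-sides is precisely the open cusped triangular region described in the statement.

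The main obstacle I anticipate is the geometric bookkeeping near the two ends of the interval $\rho_2<\beta_R<\beta_L$: one must verify that the intersection of the two one-sided regions is genuinely a single cusped triangle (rather than empty or disconnected), and justify tangency to second order at $(\beta_L,v_L)$ so that the word \emph{cusp} is warranted. Both issues reduce to elementary Taylor expansions and sign checks using the explicit formulas for $B_1$, $B_2$ in \eqref{def_B}; the algebra is routine but the case-by-case sign tracking requires care.
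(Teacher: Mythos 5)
Your route is the same ``direct calculation'' that the paper itself does not carry out (its proof is a citation to Keyfitz--Sanders--Sever, Corollary 3.1), and the core of your computation is correct: since $\mathrm{Re}(\lambda_\pm(u))=2vB_2(\beta)$ by \eqref{def_lambda}, setting $s=2v_LB_2(\beta_L)$ and $s=2v_RB_2(\beta_R)$ in \eqref{def_s} and solving the resulting linear equations for $v_R$ gives exactly \eqref{curve_s_lambda_m} and \eqref{curve_s_lambda_p}, which also proves the last sentence of the proposition. Your cusp claim is also right, and it is worth noting why it is painless: $B_1'(\beta)=2B_2(\beta)$ (an identity the paper uses later, in Section~\ref{sec_h3}), so both curves pass through $(\beta_L,v_L)$ with vanishing derivative there, i.e.\ they are tangent at the cusp.

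Two points in your sketch are not yet closed and deserve explicit attention. First, the stated equivalence requires showing that $\mathrm{(H1)}$ \emph{forces} $\rho_2<\beta_R<\beta_L$; your argument tacitly works only on that range, so the ``only if'' direction is incomplete as written. Second, your side determination for \eqref{curve_s_lambda_p} hinges on the sign of $B_1(\beta_R)+2B_2(\beta_R)(\beta_L-\beta_R)$, which you defer to ``sign tracking.'' Both gaps are closed by one observation: since $B_1''=2B_2'>0$ on $(\rho_2,\rho_1)$, the function $B_1$ is strictly convex, so its tangent lines lie strictly below its graph. In particular $B_1(\beta_R)+2B_2(\beta_R)(\beta_L-\beta_R)<B_1(\beta_L)<0$, which fixes the orientation relative to \eqref{curve_s_lambda_p} (the slope of $v_R\mapsto s-2v_RB_2(\beta_R)$ is then positive, so $\mathrm{(H1)}$ places $u_R$ above that curve and, by your monotonicity computation, below \eqref{curve_s_lambda_m}); and if one supposes $\beta_R>\beta_L$, the two tangent-line inequalities at $\beta_L$ and at $\beta_R$ turn the two halves of $\mathrm{(H1)}$ into $v_R>v_L$ and $v_R<v_L$ simultaneously, a contradiction, which rules out the reversed ordering. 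With these additions your proposal is a complete proof of the proposition, and in fact more self-contained than the paper's, which delegates everything to the cited reference.
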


The curves defined by \eqref{curve_s_lambda_m}
and \eqref{curve_s_lambda_p}
and the region where over-compressive shock solution exist
are illustrated in Fig \ref{fig_oc}.

\begin{proof}
This follows from a direct calculation.
See \cite[Corollary 3.1]{Keyfitz:2003}.
\end{proof}

\begin{figure}[t]\centering
{
\includegraphics[trim = 2cm 6.8cm 2cm 7cm, clip, width=.45\textwidth]{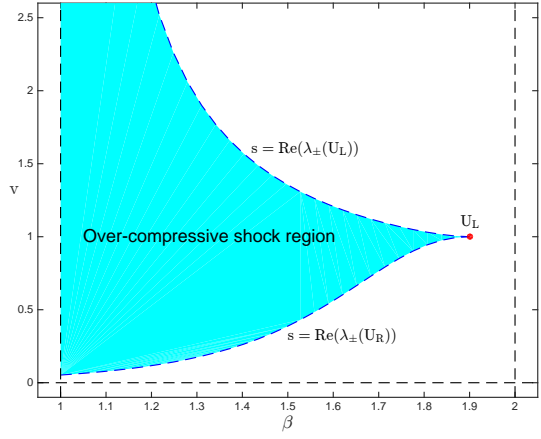}
}
\caption{The over-compressive shock region for $\rho_1=2$, $\rho_2=1$, $U_L=(1.1,1.1/1.9)$.}
\label{fig_oc}
\end{figure}

The following proposition asserts that $\mathrm{(H2)}$ is implied by $\mathrm{(H1)}$.

\begin{prop}
\label{prop_positive_deficit}
In the Riemann problem \eqref{claw_bv}, \eqref{bc_riemann},
if the Riemann data lie in an over-compressive shock region
in $[\rho_1,\rho_2]\times (0,\infty)$,
then $\mathrm{(H2)}$ holds.
\end{prop}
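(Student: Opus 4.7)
The plan is to realize $e_0$ as an integral along the Hugoniot locus and show the integrand is of one sign. By the definitions of $w_L,w_R$ and the formula \eqref{def_s} for $s$, the first components coincide: $v_LB_1(\beta_L)-s\beta_L=v_RB_1(\beta_R)-s\beta_R=:C$, so both $u_L$ and $u_R$ lie on the curve $H_s:=\{(\beta,v):vB_1(\beta)-s\beta=C\}$. On $\beta\in(\rho_2,\rho_1)$, where $B_1(\beta)<0$, $H_s$ is the graph of $v(\beta)=(C+s\beta)/B_1(\beta)$, and $w_2|_{H_s}$ becomes a function of one variable $\phi(\beta):=v(\beta)^2B_2(\beta)-sv(\beta)$. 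Since Proposition \ref{prop_overcompressive} forces $\rho_2<\beta_R<\beta_L<\rho_1$ under $\mathrm{(H1)}$, it suffices to prove $\phi$ is strictly increasing on this interval.

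The computation I would carry out is as follows. Differentiating $vB_1-s\beta=C$ gives $v'(\beta)=(s-vB_1')/B_1$, and substituting into $\phi'=(2vB_2-s)v'+v^2B_2'$ yields one form of $\phi'$. The essential algebraic observation is the identity
\begin{equation*}
  B_1'(\beta)=2B_2(\beta),
\end{equation*}
which follows at once from $B_1(\beta)=\beta-(\rho_1+\rho_2)+\rho_1\rho_2/\beta$ and the formula for $B_2$; a further differentiation gives $B_2'=\tfrac12 B_1''$. Substituting these into $\phi'$ causes the cross terms to cancel, producing
\begin{equation*}
  \phi'(\beta)=\frac{\tfrac12\,v(\beta)^2\,B_1(\beta)\,B_1''(\beta)-\big(v(\beta)B_1'(\beta)-s\big)^2}{B_1(\beta)}.
\end{equation*}

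On $(\rho_2,\rho_1)$ one has $B_1(\beta)<0$ and $B_1''(\beta)=2\rho_1\rho_2/\beta^3>0$, so the numerator is a sum of two non-positive terms while the denominator is strictly negative; hence $\phi'(\beta)\ge 0$. Moreover $\phi'(\beta)>0$ wherever $v(\beta)>0$, which by continuity holds in neighborhoods of both endpoints since $v_L,v_R>0$. Integrating from $\beta_R$ to $\beta_L$ yields
\begin{equation*}
  e_0=w_{2L}-w_{2R}=\int_{\beta_R}^{\beta_L}\phi'(\beta)\,d\beta>0.
\end{equation*}

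The only genuinely non-routine step is spotting the identity $B_1'=2B_2$: without it the two contributions to $\phi'$ carry opposite signs and no bound is visible. Combined with the convexity $B_1''>0$, it rewrites $\phi'$ as a negative numerator over a negative denominator, making the sign manifest. Note that $\mathrm{(H1)}$ enters only through Proposition \ref{prop_overcompressive} to supply the ordering $\beta_R<\beta_L$ in $(\rho_2,\rho_1)$; the monotonicity of $\phi$ itself is a structural feature of the Hugoniot locus.
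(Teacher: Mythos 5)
Your argument is correct in substance, and it takes a genuinely different route from the paper: the paper offers no internal proof of Proposition \ref{prop_positive_deficit} at all, deferring entirely to \cite[Section 3.1]{Keyfitz:2003}, whereas you give a self-contained derivation. The chain of observations checks out: $w_{1L}=w_{1R}=:C$ follows from \eqref{def_s} and \eqref{def_w}, so $u_L$ and $u_R$ lie on the same level set $\{vB_1(\beta)-s\beta=C\}$, which is a graph $v=v(\beta)$ over $(\rho_2,\rho_1)$ because $B_1<0$ there; the identity $B_1'=2B_2$ (which the paper itself invokes in Section \ref{sec_h3}) together with $B_2'=\tfrac12B_1''=\rho_1\rho_2/\beta^3>0$ collapses $\phi'$ to $-\,(vB_1'-s)^2/B_1+v^2B_2'$, a sum of two nonnegative terms, strictly positive near the endpoints where $v=v_L,v_R>0$; and Proposition \ref{prop_overcompressive} supplies $\rho_2<\beta_R<\beta_L$, so $e_0=\phi(\beta_L)-\phi(\beta_R)>0$. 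What each approach buys: yours makes the mechanism transparent (monotonicity of $w_2$ along the $s$-Hugoniot locus between the two states) and keeps the paper self-contained, at the cost of a short computation; the paper's citation buys brevity and defers to the original source.

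One caveat you should patch: Proposition \ref{prop_overcompressive} gives $\beta_R<\beta_L$ but not $\beta_L<\rho_1$, and the hypotheses of the statement allow $\beta_L=\rho_1$. There $B_1(\beta_L)=0$, the level set $\{vB_1-s\beta=C\}$ contains the entire line $\{\beta=\rho_1\}$, so it is not a graph near $\beta_L$, and $\lim_{\beta\to\rho_1^-}v(\beta)=s\rho_1/(\rho_1-\rho_2)$ need not equal $v_L$; your integral then ends at the wrong value of $w_2$. The repair is short: since $2B_2(\rho_1)=(\rho_1-\rho_2)/\rho_1$, the limit point is exactly the vertex of the parabola $v\mapsto v^2B_2(\rho_1)-sv$, so $w_{2L}-w_2\bigl(\rho_1,\tfrac{s\rho_1}{\rho_1-\rho_2}\bigr)=B_2(\rho_1)\bigl(v_L-\tfrac{s\rho_1}{\rho_1-\rho_2}\bigr)^2\ge0$, and adding this to your integral over $(\beta_R,\rho_1)$, which is still strictly positive because $\phi'>0$ near $\beta_R$, recovers $e_0>0$. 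With that endpoint case handled (or with the scope explicitly restricted to $\beta_L<\rho_1$), the proof is complete.
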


\begin{proof}
See \cite[Section 3.1]{Keyfitz:2003}.
\end{proof}

The assumption $\mathrm{(H3)}$
is a condition on dynamics of $2$-dimensional systems.
Analyzing phase portraits we have the following

\begin{prop}\label{prop_h3}
Given Riemann data in an over-compressive shock region
in $[\rho_1,\rho_2]\times (0,\infty)$,
if $\beta_R<\sqrt{\rho_1\rho_2}<\beta_L$,
$w_{1L}<0$, $w_{2R}<0<w_{2L}$,
and $|s|$ is sufficiently small,
then $\mathrm{(H3)}$ holds.
\end{prop}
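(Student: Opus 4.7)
The plan is to study the planar fast system \eqref{fast_bv} (with $w$ frozen at $w_L$ for $\gamma_1$ and $w_R$ for $\gamma_2$) by combining a Poincar\'e-type blow-up of the boundary at $v=+\infty$, via the chart $p=1/v$ with time rescaled by $d\tau' = v\,d\tau$, with a global phase-plane analysis in the physical region $\rho_2 < \beta < \rho_1$, $v>0$. The smallness hypothesis on $|s|$ is used to reduce the problem to the case $s=0$ by continuous dependence; at $s=0$ the algebraic identity $B_1'(\beta)=2B_2(\beta)$ simplifies the local computations and yields a particularly clean Bendixson divergence.

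In the desingularized $(\beta,p)$-system, the points $(\rho_1,0)$ and $(\rho_2,0)$ are hyperbolic saddles with upper-triangular Jacobians: one eigenvalue lies along the invariant line $\{p=0\}$, and the other is transverse with the opposite sign. Using the Rankine--Hugoniot identity $w_{1L}=w_{1R}$ together with $w_{1L}<0$ and $|s|$ small, one computes the transverse eigenvectors explicitly and verifies that exactly one branch of each transverse manifold enters the physical quadrant. Pulled back to the original fast time, this produces a canonical orbit that reaches $(\rho_1,+\infty)$ in finite time (the candidate for $\gamma_1$) and another that emerges from $(\rho_2,+\infty)$ (the candidate for $\gamma_2$).

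It remains to identify the other endpoint of each orbit as $u_L$ and $u_R$ respectively. A direct computation at $s=0$ gives that the divergence of the $(\beta,v)$-vector field is a nonzero multiple of $vB_2(\beta)$, which has a fixed sign in each of the simply connected half-regions $\{\beta>\sqrt{\rho_1\rho_2}\}$ and $\{\beta<\sqrt{\rho_1\rho_2}\}$, so Bendixson's criterion rules out closed orbits in each. Under the hypotheses $\beta_L>\sqrt{\rho_1\rho_2}$ and $w_{2L}>0$, one constructs a positively-invariant subregion of $\{\beta>\sqrt{\rho_1\rho_2},\,v>0\}$ bounded by pieces of the nullclines $\{\dot\beta=0\}$, $\{\dot v=0\}$ and of the vertical lines $\beta=\sqrt{\rho_1\rho_2}$ and $\beta=\rho_1$; this region contains $u_L$ as its unique equilibrium, and by $\mathrm{(H1)}$ $u_L$ is a spiral source. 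Poincar\'e--Bendixson then identifies the $\alpha$-limit of the $\gamma_1$-candidate as $u_L$. The analogous construction in $\{\beta<\sqrt{\rho_1\rho_2},\,v>0\}$ using $\beta_R<\sqrt{\rho_1\rho_2}$ and $w_{2R}<0$ handles $\gamma_2$, with $u_R$ as the unique spiral sink and $\omega$-limit. The main technical obstacle is the explicit construction of the two trapping regions, requiring careful bookkeeping of where the nullclines cross the critical lines; the smallness of $|s|$ ensures that the strict-sign conditions verified at $s=0$ persist by continuity.
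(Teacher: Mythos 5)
Your proposal follows essentially the same route as the paper's proof: pass to $r=1/v$ with rescaled time, identify $(\rho_1,0)$ and $(\rho_2,0)$ as hyperbolic saddles whose transverse branches enter the physical region (the needed sign $s\rho_1+w_{1L}<0$ is derived in the paper from $\mathrm{(H1)}$ alone via Lemma \ref{lem_bdry_sign}, whereas you obtain it from $w_{1L}=w_{1R}<0$ and $|s|$ small, which is also fine under the stated hypotheses), rule out closed orbits via the divergence $4vB_2(\beta)-2s$, and conclude with an invariant region plus Poincar\'e--Bendixson identifying $u_L$ and $u_R$ as the $\alpha$- and $\omega$-limits of the saddle branches. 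One correction: for the $\gamma_1$ half the trapping region must be \emph{negatively} (backward) invariant, not positively invariant --- a positively invariant region cannot have a boundary piece on $\{\beta=\rho_1\}$, where $\dot\beta=-s\rho_1-w_{1L}>0$ pushes the flow out, and positive invariance would in any case not constrain the $\alpha$-limit of the stable branch of $(\rho_1,0)$; positive invariance is what the symmetric $\gamma_2/u_R$ half requires. For comparison, the paper sidesteps the nullcline bookkeeping by bounding its region with the vertical half-lines $\{\beta=\beta_L,\,v\ge v_L\}$, $\{\beta=\rho_1,\,v\ge v_L\}$ and a backward trajectory arc through $(\rho_1,v_L)$, which makes backward invariance immediate; either way one should also note (as both you and the paper leave implicit) that backward orbits cannot escape to $v=\infty$ inside the region, since $\dot v\approx v^2B_2(\beta)>0$ there.
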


\begin{proof}
See Section \ref{sec_h3}.
\end{proof}

Proposition \ref{prop_positive_deficit} says that
$\mathrm{(H2)}$ holds whenever $\mathrm{(H1)}$ holds,
so the Main Theorem requires only $\mathrm{(H1)}$ and $\mathrm{(H3)}$.
The author believes
that $\mathrm{(H3)}$ is also a consequence of $\mathrm{(H1)}$.
This needs further work to be verified.

For the sample set of data \eqref{sample_data},
we have \beq{sample_w}
  (w_{1L},w_{2L})=(-.05,.22),\;
  (w_{1R},w_{2R})=(-.05,-.11),\;
  s=0.
\] Since $w_{2R}<w_{2L}$, $\mathrm{(H2)}$ holds.
From Proposition \ref{prop_overcompressive} and \ref{prop_h3},
$\mathrm{(H1)}$ and $\mathrm{(H3)}$ also hold.
Hence the main theorem applies.
Note that the conditions $\mathrm{(H1)}$-$\mathrm{(H3)}$
persist under perturbation of
the Riemann data $(u_L,u_R)$,
so those assumptions still hold
for any data close to \eqref{sample_data}.

\section{Geometric Singular Perturbation Theory}
\label{sec_GSPT}
Our main goal is to solve
the boundary value problem \eqref{dafermos_similar} and \eqref{bc_u_infty}.
Note that \eqref{dafermos_similar} is a \emph{singularly perturbed equation}
since the perturbation $\epsilon\frac{d^2}{d\xi^2}u$
has a higher order derivative than the other terms in the equation.
To deal with singularly perturbed equations,
we will apply \emph{Geometric Singular Perturbation Theory} (GSPT).
The idea of GSPT 
is to first study a set of subsystems which forms a decomposition of a system,
and then to use the information for the subsystems
to conclude results for the original system.
Prototypical examples include
relaxation oscillations for
forced Van der Pol Equations \cite{Dumortier:1996,Krupa:2001,Krupa:2001a}
and FitzHugh-Nagumo Equations \cite{Jones:1991,Keyfitz:2003,Liu:2006}.
Surveys on this topic can be found in
\cite{Jones:1995,Kaper:1999,Kaper:2001,Rubin:2002}.

In Section \ref{subsec_fenichel} and \ref{subsec_silnikov},
we recall some fundamental theorems in GSPT. 
In Section \ref{subsec_EL}
we state and give new proofs for a version
of the Exchange Lemma.

\subsection{Fenichel's Theory for Fast-Slow Systems} \label{subsec_fenichel}
Note that \eqref{sf_u} is a \emph{fast-slow system},
which means that the system is of the form \beqeps{sf_xy}
  &\dot x=f(x,y,\epsilon)\\
  &\dot y=\epsilon g(x,y,\epsilon)
\] where $(x,y)\in\mathbb R^n\times \mathbb R^l$, 
and $\epsilon$ is a parameter.
In order to deal with fast-slow systems,
Fenichel's Theory
was developed in \cite{Fenichel:1973,Fenichel:1977,Fenichel:1979}.
Some expositions for that theory can be found in \cite{Wiggins:1994,Jones:1995}.

An important feature of a fast-slow system
is that the system can be decomposed into two subsystems:
the \emph{limiting fast system}
and the \emph{limiting slow system}.
The limiting fast system is obtained by taking $\epsilon=0$ in \eqref{sf_xy};
that is, \beq{fast_xy}
  &\dot x=f(x,y,0)\\
  &\dot y=0.
\] 
On the other hand, note that the system \eqref{sf_xy} can be converted to,
after a rescaling of time,
\beqeps{sf_xy_singular}
  &\epsilon x'= f(x,y,\epsilon)\\
  &y'= g(x,y,\epsilon).
\] Taking $\epsilon=0$ in \eqref{sf_xy_singular}, 
we obtain the limiting slow system \beq{slow_xy}
  &0=f(x,y,0)\\
  &y'=g(x,y,0).
\] 
Note that the limiting slow system \eqref{slow_xy}
describes dynamics on the set of critical points of 
the limiting fast system \eqref{fast_xy},
so we will need to piece together the information of
the limiting fast system and the limiting slow system
in the vicinity of the set of critical points.
To piece this information together,
\emph{normal hyperbolicity} defined below will be a crucial condition.
\begin{defn}\label{defn_normally_hyperbolic}
A \emph{critical manifold} $\mathcal S_0$ for \eqref{fast_xy}
is an $l$-dimensional manifold consisting of critical points of \eqref{fast_xy}.
A critical manifold is \emph{normally hyperbolic}
if $D_xf(x,y,0)|_{\mathcal S_0}$ is hyperbolic.
That is,
at any point $(x_0,y_0)\in \mathcal S_0$,
all eigenvalues of $D_xf(x,y,0)|_{(x_0,y_0)}$
have nonzero real part.
\end{defn}

Now we turn to discussing normal hyperbolicity
for general systems \beq{deq_x}
  \dot{z}= F(z),
\] where $z\in \mathbb R^N$, $N\ge 1$.
A manifold $\mathcal S\subset \mathbb R^N$
is \emph{locally invariant}
if for any point $p\in \mathcal S\setminus\partial \mathcal S$,
there exist $t_1<0<t_2$ such that $p\cdot (t_1,t_2)\in \mathcal S$,
where $\cdot$ denotes the flow for \eqref{deq_x}.
In the vicinity of a locally invariant manifold,
under certain conditions
the system can be decomposed into lower-dimensional subsystems.
For instance,
when $\mathcal S=\{p_0\}$ is an isolated hyperbolic equilibrium for \eqref{deq_x},
the stable and unstable manifolds $W^s(p_0)$ and $W^u(p_0)$
exist according to the Hartman-Grobman Theorem \cite{Hartman:1964},
and the union of their tangent spaces at $p_0$ spans $\mathbb R^N$.

A locally invariant $C^r$ manifold $\mathcal S\subset \mathbb R^N$, $r\ge 1$,
is normally hyperbolic for the system \eqref{deq_x} if
the growth rate of vectors transverse to the manifold
dominates the growth rate of vectors tangent to the manifold.
(Note that this is consistent with Definition \ref{defn_normally_hyperbolic}.)
In this case, from the standard theory for normally hyperbolic manifolds
(see, for example, \cite{Hirsch:1977,Vanderbauwhede:1987,Chow:1988})
it is assured that
stable and unstable manifolds
$W^s(\mathcal S)$ and $W^u(\mathcal S)$ are defined.

For a locally invariant manifold 
$\Lambda\subset \mathbb R^N$ for \eqref{deq_x}
which is not necessarily normally hyperbolic,
a \emph{center manifold}
is a normally hyperbolic locally invariant manifold,
with the smallest possible dimension,
containing $\Lambda$.
In classical cases,
$\Lambda=\{p_0\}$ is an isolated non-hyperbolic equilibrium,
and a center manifold for $p_0$
has dimension equal to
the number of generalized eigenvalues of $DF(p_0)$
with zero real part.
For instance, the planar system \[
  \dot{x}= x^3,\quad
  \dot{y}= y ,
\] has a non-hyperbolic isolated equilibrium $p_0=(0,0)$,
and the $x$-axis is a center manifold for $p_0$.
For general invariant sets $\Lambda$, we refer to \cite{Chow:2000a,Chow:2000}.

Fenichel's Theory is a center manifold theory for fast-slow systems.
For a normally hyperbolic critical manifold $\mathcal S_0$ for \eqref{fast_xy},
the stable and unstable manifolds
$W^s(\mathcal S_0)$ and $W^u(\mathcal S_0)$
can be defined in the natural way.
We denote them by
$W^s_0(\mathcal S_0)$ and $W^u_0(\mathcal S_0)$
to indicate their invariance under \eqref{sf_xy} with $\epsilon=0$.
Fenichel's Theory assures that
the hyperbolic structure of $\mathcal S_0$ persists under perturbation \eqref{sf_xy}.
Below we state three fundamental theorems
of Fenichel's Theory following \cite{Jones:1995}.

\begin{thm}[Fenichel's Theorem 1]
\label{thm_fenichel_invariant}
Consider the system \eqref{sf_xy},
where $(x,y)\in \mathbb R^n\times \mathbb R^l$,
and $f$, $g$ are $C^r$ for some $r\ge 2$.
Let $\mathcal S_0$ be a compact normally hyperbolic manifold for \eqref{fast_xy}.
Then for any small $\epsilon\ge 0$ there exist
locally invariant $C^r$ manifolds,
denoted by $\mathcal S_\epsilon$,
$W^s_\epsilon(\mathcal S_\epsilon)$
and $W^u_\epsilon(\mathcal S_\epsilon)$,
which are $C^1$ $O(\epsilon)$-close to
$\mathcal S_0$, $W^s_0(\mathcal S_0)$ and $W^u_0(\mathcal S_0)$, respectively.
Moreover, for any continuous families of compact sets
$\mathcal I_\epsilon\subset W^u_\epsilon(\mathcal S_\epsilon)$,
$\mathcal J_\epsilon\subset W^s_\epsilon(\mathcal S_\epsilon)$,
$\epsilon\in [0,\epsilon_0]$,
there exist positive constants $C$ and $\nu$
such that \begin{subequations}\label{est_dist}\begin{align}
  &\mathrm{dist}(z\cdot t,\mathcal S_\epsilon)
  \le Ce^{\nu t}
  \quad\forall\; z\in \mathcal I_\epsilon,\; t\le 0\\
  &\mathrm{dist}(z\cdot t,\mathcal S_\epsilon)
  \le Ce^{-\nu t}
  \quad\forall\; z\in \mathcal J_\epsilon,\; t\ge 0,
\end{align}\end{subequations}
where $\cdot$ denotes
the flow for \eqref{sf_xy}.
\end{thm}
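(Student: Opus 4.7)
The plan is to prove Fenichel's Theorem 1 by recasting it as a persistence statement for a compact normally hyperbolic invariant manifold under a small $C^r$ perturbation of the flow. At $\epsilon=0$ the fast system $\dot x=f(x,y,0)$, $\dot y=0$ already has $\mathcal{S}_0$ as a compact invariant manifold consisting entirely of equilibria, and normal hyperbolicity is exactly the spectral condition on $D_xf(x,y,0)|_{\mathcal S_0}$ in Definition \ref{defn_normally_hyperbolic}. The full system \eqref{sf_xy} is then a $C^r$ $O(\epsilon)$-perturbation of this fast flow, so the goal is to run a Hadamard graph-transform (or Lyapunov--Perron) argument, uniformly in $\epsilon\in[0,\epsilon_0]$, to produce $\mathcal S_\epsilon$ together with its stable and unstable manifolds as small perturbations of $\mathcal S_0$, $W^s_0(\mathcal S_0)$, $W^u_0(\mathcal S_0)$.

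Concretely, I would look for $\mathcal S_\epsilon$ as the graph of a map $h_\epsilon:U\to\mathbb{R}^n$, where $U\subset\mathbb{R}^l$ is a neighborhood of the projection of $\mathcal S_0$. Invariance under \eqref{sf_xy} is the identity
\[
f\bigl(h_\epsilon(y),y,\epsilon\bigr)=\epsilon\,Dh_\epsilon(y)\cdot g\bigl(h_\epsilon(y),y,\epsilon\bigr),
\]
which at $\epsilon=0$ reduces to $f(h_0(y),y,0)=0$; this defines $h_0$ and is uniquely solvable by the implicit function theorem because $D_xf$ is invertible on $\mathcal S_0$. For small $\epsilon>0$ I would formalize this as a fixed-point problem for a graph-transform operator $\mathcal T_\epsilon$ acting on $C^r$ sections of the normal bundle of $\mathcal S_0$: evolve a graph forward for a fixed short time under the flow, then read the image as a new graph over the base. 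Normal hyperbolicity together with compactness of $\mathcal S_0$ ensures a uniform spectral gap, which forces $\mathcal T_\epsilon$ to contract in a norm adapted to the hyperbolic splitting, yielding $h_\epsilon$ with $C^r$ regularity and $C^1$ dependence on $\epsilon$.

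With $\mathcal S_\epsilon$ in hand I would build $W^s_\epsilon(\mathcal S_\epsilon)$ and $W^u_\epsilon(\mathcal S_\epsilon)$ by the same graph-transform, now applied over the stable and unstable subbundles of $D_xf$ along $\mathcal S_\epsilon$; at $\epsilon=0$ these subbundles provide a direct-sum decomposition of the ambient space near $\mathcal S_0$, so $W^{s,u}_0$ are graphs over them and perturb to $W^{s,u}_\epsilon$. The exponential estimates \eqref{est_dist} would then come out of the same spectral gap. Let $\nu$ be any constant strictly less than the infimum over $\mathcal S_\epsilon$ of the modulus of the real parts of the stable (respectively unstable) eigenvalues of $D_xf$; in an adapted Riemannian metric, the $x$-component of the linearized flow is strictly contracted at rate $e^{-\nu t}$ on the stable bundle, while the slow drift tangential to $\mathcal S_\epsilon$ is $O(\epsilon)$ and hence absorbed, so a Gronwall argument along trajectories in $\mathcal J_\epsilon$ yields $\mathrm{dist}(z\cdot t,\mathcal S_\epsilon)\le Ce^{-\nu t}$ uniformly for $\epsilon\in[0,\epsilon_0]$, and symmetrically for $\mathcal I_\epsilon$ in backward time.

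The main obstacle is globalizing the construction: the graph-transform argument is natural in local coordinates in which $\mathcal S_0$ is flat and the stable/unstable splitting is constant, but $\mathcal S_0$ is only a compact manifold and the splitting a bundle over it. I would handle this via a cut-off: multiply the vector field by a bump that is $1$ on a tubular neighborhood of $\mathcal S_0$ and vanishes outside a slightly larger tube, so that the modified flow preserves a global trivializable neighborhood and the fixed-point problem can be posed on a single Banach space of $C^r$ sections of the normal bundle with a norm encoding the derivatives up to order $r$. Taking the tube and $\epsilon$ small enough makes $\mathcal T_\epsilon$ a genuine contraction; because the cut-off is supported away from where the invariant manifold actually lives, the resulting $\mathcal S_\epsilon$ and $W^{s,u}_\epsilon(\mathcal S_\epsilon)$ are locally invariant for the original, uncut system \eqref{sf_xy}, completing the proof.
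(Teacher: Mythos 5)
The paper itself does not prove this statement: it simply cites \cite[Theorem 3]{Jones:1995}, and your sketch is essentially the argument behind that citation (Fenichel's persistence theorem for compact normally hyperbolic invariant manifolds, proved by a Hadamard graph-transform or Lyapunov--Perron scheme applied to the fast-slow vector field, with the slow drift entering only as an $O(\epsilon)$ tangential term). So in spirit you are following the same route as the reference the paper relies on.

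There is, however, one genuine soft spot in the operative step. You describe $\mathcal T_\epsilon$ as ``evolve a graph forward for a fixed short time, then read the image as a new graph over the base,'' acting on sections of the \emph{full} normal bundle. That forward graph transform is a contraction only when the normal spectrum is entirely stable (attracting case); when $D_xf|_{\mathcal S_0}$ has eigenvalues on both sides of the imaginary axis -- which Definition \ref{defn_normally_hyperbolic} allows, and which is exactly the situation this paper needs, since the manifolds $\mathcal P_L$, $\mathcal P_R$ of Section \ref{subsec_PL} are of saddle type -- the forward image of a graph over the base need not be a graph at all, and iterates diverge along the unstable normal directions, so no norm ``adapted to the splitting'' rescues a single forward transform. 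The standard repair, which you only gesture at parenthetically, is either (i) the Lyapunov--Perron formulation, where the invariance equation is written as an integral equation integrating the stable normal components forward and the unstable ones backward, or (ii) two Hadamard transforms: construct $W^{cu}$ as graphs over the (center $\oplus$ unstable) directions by the forward transform, $W^{cs}$ by the backward transform, and set $\mathcal S_\epsilon=W^{cu}\cap W^{cs}$, with $W^{u,s}_\epsilon(\mathcal S_\epsilon)$ recovered from these together with their invariant foliations. Relatedly, $C^r$ smoothness of the fixed point does not follow from the contraction alone; one needs a fiber-contraction (or $C^r$-section) argument using the spectral gap, which here is harmless only because the tangential rates are $O(\epsilon)$ -- worth saying explicitly, since it is the reason the fast-slow setting gives $C^r$ for every fixed $r$.

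A smaller point: your cut-off is aimed at trivializing a tubular neighborhood, but the reason the theorem only asserts \emph{local} invariance is the boundary of the slow domain -- trajectories of \eqref{sf_xy} can leave $\mathcal S_\epsilon$ through the edge where the slow flow exits the base. Multiplying the whole vector field by a scalar bump does not address this; the usual device is to modify (or extend) the slow equation near the boundary of the base so the manifold becomes overflowing/inflowing invariant for the modified system, and then observe that the resulting $\mathcal S_\epsilon$, $W^{s,u}_\epsilon(\mathcal S_\epsilon)$ are locally invariant for the original system on the region where nothing was altered. With those two repairs your outline matches the cited proof; the exponential estimates \eqref{est_dist} then follow from the adapted-metric/Gronwall argument you give, or more cleanly from the Fenichel normal form of Theorem \ref{thm_fenichel_coordinate}.
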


\begin{proof}
See \cite[Theorem 3]{Jones:1995}.
\end{proof}

\begin{rmk}
If $\mathcal S_0$ is locally invariant under \eqref{sf_xy} for each $\epsilon$,
then the $\mathcal S_\epsilon$ can be chosen to be $\mathcal S_0$
because of the construction
in the proof of \cite[Theorem 3]{Jones:1995}.
\end{rmk}

Note that $W^u_\epsilon(\mathcal S_\epsilon)$ and $W^s_\epsilon(\mathcal S_\epsilon)$
can be interpreted as a decomposition
in a neighborhood of $\mathcal S_0$ in $(x,y)$-space.
The following theorem asserts that
this induces a change of coordinates $(a,b,c)$
such that $W^u_\epsilon(\mathcal S_\epsilon)$
and $W^s_\epsilon(\mathcal S_\epsilon)$
correspond to $(a,c)$-space and $(b,c)$-space, respectively.

\begin{thm}[Fenichel's Theorem 2]
\label{thm_fenichel_coordinate}
Suppose the assumptions in Theorem \ref{thm_fenichel_invariant} hold.
Then under a $C^r$ $\epsilon$-dependent
coordinate change $(x,y)\mapsto(a,b,c)$,
the system \eqref{sf_xy} can be brought to the form
\beqeps{sf_abc}
  &\dot a= A^u(a,b,c,\epsilon)a\\
  &\dot b= A^s(a,b,c,\epsilon)b\\
  &\dot c= \epsilon\big( h(c)+ E(a,b,c,\epsilon)\big)
\] in a neighborhood of $\mathcal S_\epsilon$,
where the coefficients are $C^{r-2}$ functions satisfying \beq{cond_spec}
  \inf_{\lambda\in \mathrm{Spec}A^u(a,b,c,0)}
  \mathrm{Re}\,\lambda>2\nu,\quad
  \sup_{\lambda\in \mathrm{Spec}A^s(a,b,c,0)}
  \mathrm{Re}\,\lambda<-2\nu
\] for some $\nu>0$ and \beq{cond_E}
  E=0\quad\text{on }\{a=0\}\cup \{b=0\}.
\] 
\end{thm}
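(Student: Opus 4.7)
My proof plan is to construct the coordinates $(a,b,c)$ by straightening out the invariant manifolds produced by Theorem \ref{thm_fenichel_invariant} and their Fenichel fibrations. First I would apply Theorem \ref{thm_fenichel_invariant} to obtain, for each small $\epsilon\ge 0$, the slow manifold $\mathcal S_\epsilon$ together with its local stable and unstable manifolds $W^s_\epsilon(\mathcal S_\epsilon)$, $W^u_\epsilon(\mathcal S_\epsilon)$. Pick $C^r$ coordinates $c\in\mathbb R^l$ on $\mathcal S_\epsilon$; because $\mathcal S_\epsilon$ is invariant and the restricted vector field is $O(\epsilon)$, the slow dynamics read $\dot c=\epsilon \,\tilde h(c,\epsilon)$ for some $C^{r-1}$ function $\tilde h$, and I will set $h(c):=\tilde h(c,0)$ as required.

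The next step is to use the invariant Fenichel foliations $\mathcal F^s_\epsilon$ and $\mathcal F^u_\epsilon$ of $W^s_\epsilon$ and $W^u_\epsilon$ by $C^{r-1}$ stable/unstable fibers over $\mathcal S_\epsilon$ (the content of the third Fenichel theorem, which complements the two already cited). Choosing linear coordinates $a\in\mathbb R^{n_u}$ along unstable fibers and $b\in\mathbb R^{n_s}$ along stable fibers, and using the base point on $\mathcal S_\epsilon$ as the $c$-coordinate, I obtain a $C^{r-1}$, $\epsilon$-dependent diffeomorphism $(x,y)\mapsto(a,b,c)$ on a neighborhood of $\mathcal S_\epsilon$: the transverse decomposition $TW^u_\epsilon\oplus TW^s_\epsilon\oplus T\mathcal S_\epsilon=\mathbb R^{n+l}$ near $\mathcal S_\epsilon$ makes the map a local diffeomorphism, and in these coordinates $\mathcal S_\epsilon=\{a=b=0\}$, $W^u_\epsilon=\{b=0\}$, and $W^s_\epsilon=\{a=0\}$.

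I would then read off the normal form from invariance. Since $\{a=0\}=W^s_\epsilon$ is flow-invariant, $\dot a$ vanishes there, and Hadamard's lemma yields $\dot a=A^u(a,b,c,\epsilon)a$ with $A^u$ a $C^{r-2}$ matrix-valued function; the symmetric argument on $\{b=0\}$ gives $\dot b=A^s(a,b,c,\epsilon)b$. The spectral gap \eqref{cond_spec} at $\epsilon=0$ is inherited from the eigenvalue separation of $D_xf(x,y,0)|_{\mathcal S_0}$ guaranteed by normal hyperbolicity, shrunk to a uniform $2\nu$ by compactness; $C^1$-closeness from Theorem \ref{thm_fenichel_invariant} extends it to small $\epsilon$. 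For the slow equation, the fast--slow scaling immediately gives $\dot c=\epsilon G(a,b,c,\epsilon)$ for some $C^{r-2}$ function $G$; the key point is the asymptotic phase property of the fibrations, namely that the flow carries the fiber through $m\in\mathcal S_\epsilon$ into the fiber through $m\cdot t$. Using this base point as $c$ means that orbits lying in a single fiber ($a=0$ or $b=0$) have $c$-component evolving exactly as under the slow dynamics on $\mathcal S_\epsilon$, so $G(a,b,c,\epsilon)=\tilde h(c,\epsilon)$ whenever $ab=0$. Writing $G=h(c)+E(a,b,c,\epsilon)$ absorbs both the $\epsilon$-dependence of $\tilde h$ and the deviation off the fibers into $E$, which therefore satisfies \eqref{cond_E}.

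The principal obstacle is the middle step: establishing that the Fenichel fibrations are $C^{r-1}$ and that the base point projection $\pi^{s/u}_\epsilon:W^{s/u}_\epsilon\to\mathcal S_\epsilon$ commutes with the flow, so that the choice of $c$ as the base point yields \eqref{cond_E}. This is precisely the content of the invariant foliation theorem associated with a normally hyperbolic manifold (Fenichel's third theorem), and its proof is the analytical heart of the construction; once that flow-commuting, smooth fibration is in hand, the remaining steps are algebraic consequences of invariance and of Hadamard's lemma.
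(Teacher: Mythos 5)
Your overall route is the standard one and is exactly the construction behind the sources the paper leans on (the paper itself offers no proof, only the citation to Jones's notes and Jones--Tin): obtain $\mathcal S_\epsilon$, $W^{s}_\epsilon$, $W^{u}_\epsilon$ from Theorem \ref{thm_fenichel_invariant}, straighten these manifolds and their stable/unstable fibrations, take the fiber base point as the slow coordinate $c$, and read off the factored fast equations from invariance of $\{a=0\}$ and $\{b=0\}$ via Hadamard's lemma, with the spectral gap \eqref{cond_spec} coming from normal hyperbolicity, compactness, and $C^1$ closeness in $\epsilon$. You also correctly isolate the flow-equivariance of the base-point projections (the fiber through $m$ is carried into the fiber through $m\cdot t$) as the ingredient that forces the slow equation to agree with the reduced flow on $W^s_\epsilon\cup W^u_\epsilon$. (Minor notational slip: the splitting should be $E^u\oplus E^s\oplus T\mathcal S_\epsilon=\mathbb R^{n+l}$ along $\mathcal S_\epsilon$, not $TW^u_\epsilon\oplus TW^s_\epsilon\oplus T\mathcal S_\epsilon$, since $T\mathcal S_\epsilon$ is contained in both tangent bundles.)

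There is, however, a genuine inconsistency in your last step. Your fiber argument shows that on $\{a=0\}\cup\{b=0\}$ the slow component satisfies $\dot c=\epsilon\,\tilde h(c,\epsilon)$, the reduced flow on $\mathcal S_\epsilon$. If you then set $h(c):=\tilde h(c,0)$ and declare that $E$ ``absorbs the $\epsilon$-dependence of $\tilde h$,'' you get $E=\tilde h(c,\epsilon)-\tilde h(c,0)$ on $\{a=0\}\cup\{b=0\}$, which is $O(\epsilon)$ but not zero, so \eqref{cond_E} fails as you have written it; $E$ cannot simultaneously contain that drift and vanish on the union. The standard resolution, and the one consistent with the references, is to allow the reduced term to depend on $\epsilon$, i.e.\ $\dot c=\epsilon\bigl(h(c,\epsilon)+E(a,b,c,\epsilon)\bigr)$ with $E$ vanishing on $\{a=0\}\cup\{b=0\}$ (in Jones--Tin the deviation is in fact bilinear in $(a,b)$), or equivalently to append $\dot\epsilon=0$ and treat $(c,\epsilon)$ as the slow variable --- which is precisely what the paper does in Section \ref{subsec_silnikov} before using the normal form, and under which your equivariance argument yields \eqref{cond_E} verbatim. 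As stated, the theorem's $\epsilon$-free $h(c)$ is itself a slight imprecision, but your proof should not claim both properties of $E$ at once; fix the bookkeeping as above and the argument is complete.
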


\begin{proof}
See \cite[Section 3.5]{Jones:1995} or \cite[Proposition 1]{Jones:2009}.
\end{proof}

The family of trajectories for \eqref{slow_xy}
forms a foliation of $\mathcal S_0$.
The following theorem says that
this induces a foliation of
$W^u_\epsilon(\mathcal S_\epsilon)$ and $W^s_\epsilon(\mathcal S_\epsilon)$.

\begin{thm}[Fenichel's Theorem 3]
\label{thm_foliation}
Suppose the assumptions in Theorem \ref{thm_fenichel_invariant} hold.
Let $\Lambda_0$ be a submanifold in $\mathcal S_0$
which is locally invariant under \eqref{slow_xy}.
Then there exist locally invariant manifolds $\Lambda_\epsilon$,
$W^s_\epsilon(\Lambda_\epsilon)$,
and $W^u_\epsilon(\Lambda_\epsilon)$
for \eqref{sf_xy}
which are $C^{r-2}$ $O(\epsilon)$-close to
$\Lambda_0$,
$W^s_0(\Lambda_0)$,
and $W^u_0(\Lambda_0)$, respectively.
Moreover, for any continuous families of compact sets
$\mathcal I_\epsilon\subset W^u_\epsilon(\Lambda_\epsilon)$,
$\mathcal J_\epsilon\subset W^s_\epsilon(\Lambda_\epsilon)$,
$\epsilon\in [0,\epsilon_0]$,
there exist positive constants $C$ and $\nu$
such that \eqref{est_dist}
holds with $\mathcal S_\epsilon$ replaced by $\Lambda_\epsilon$.
Suppose in addition that
$S_0$ is invariant under \eqref{sf_xy} for each $\epsilon$.
Then $\Lambda_\epsilon$ can be chosen to be $\Lambda_0$.
\end{thm}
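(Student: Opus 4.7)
The plan is to reduce the problem to Fenichel coordinates via Theorem \ref{thm_fenichel_coordinate}, in which a foliation of $W^{u,s}_\epsilon(\mathcal{S}_\epsilon)$ by fibers over $\Lambda_\epsilon$ is directly visible, and then to transport the resulting manifolds back to the original variables. First, I would apply Theorem \ref{thm_fenichel_coordinate} to obtain a $C^r$, $\epsilon$-dependent change of coordinates $\Phi_\epsilon:(x,y)\mapsto(a,b,c)$ defined in a neighborhood of $\mathcal{S}_0$, under which $\mathcal{S}_\epsilon=\{a=b=0\}$, $W^u_\epsilon(\mathcal{S}_\epsilon)=\{b=0\}$, and $W^s_\epsilon(\mathcal{S}_\epsilon)=\{a=0\}$. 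The decisive feature of the normal form \eqref{sf_abc} is condition \eqref{cond_E}: on either $\{a=0\}$ or $\{b=0\}$ the $c$-equation reduces to $\dot c=\epsilon h(c)$, which is, after time rescaling, the limiting slow equation \eqref{slow_xy} written in $c$-coordinates. I would write $L_0\subset\{a=b=0\}$ for the image of $\Lambda_0$ under $\Phi_0|_{\mathcal{S}_0}$; by hypothesis $L_0$ is locally invariant under $c'=h(c)$.

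The candidate manifolds are then defined in the normal form by
\[
\Lambda_\epsilon=\Phi_\epsilon^{-1}\{a=b=0,\;c\in L_0\},\quad W^u_\epsilon(\Lambda_\epsilon)=\Phi_\epsilon^{-1}\{b=0,\;c\in L_0\},
\]
\[
W^s_\epsilon(\Lambda_\epsilon)=\Phi_\epsilon^{-1}\{a=0,\;c\in L_0\}.
\]
Local invariance is to be checked directly: on $\{b=0\}$ we have $\dot b\equiv 0$ and, by \eqref{cond_E}, $\dot c=\epsilon h(c)$, which preserves $L_0$, so $W^u_\epsilon(\Lambda_\epsilon)$ is locally invariant; the argument for $W^s_\epsilon(\Lambda_\epsilon)$ is symmetric, and $\Lambda_\epsilon=W^u_\epsilon(\Lambda_\epsilon)\cap W^s_\epsilon(\Lambda_\epsilon)$ inherits invariance. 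The $C^{r-2}$ regularity follows from the $C^{r-2}$ smoothness of the coefficients in \eqref{sf_abc} together with the $C^r$ character of $\Phi_\epsilon$; the $O(\epsilon)$-closeness of $\Lambda_\epsilon$ and $W^{u,s}_\epsilon(\Lambda_\epsilon)$ to their $\epsilon=0$ counterparts is inherited from the $C^1$ $O(\epsilon)$-closeness of $\Phi_\epsilon$ to $\Phi_0$ built into Theorem \ref{thm_fenichel_coordinate}.

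The exponential estimates \eqref{est_dist} will follow from the spectral gap \eqref{cond_spec}. For $z\in\mathcal{I}_\epsilon\subset W^u_\epsilon(\Lambda_\epsilon)$, its normal-form coordinates evolve as $(a(t),0,c(t))$ with $c(t)\in L_0$ and $\dot a=A^u(a,0,c,\epsilon)a$; since $\mathrm{Re}\,\mathrm{Spec}\,A^u>2\nu$ uniformly, a Gronwall-type argument gives $|a(t)|\le Ce^{\nu t}|a(0)|$ for $t\le 0$, and pulling back by $\Phi_\epsilon$, which is bi-Lipschitz on compact sets, yields $\mathrm{dist}(z\cdot t,\Lambda_\epsilon)\le C'e^{\nu t}$; the case of $\mathcal{J}_\epsilon$ is symmetric. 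For the final assertion, if $\mathcal{S}_0$ is invariant under \eqref{sf_xy} for every $\epsilon$, then by the remark following Theorem \ref{thm_fenichel_invariant} one may take $\mathcal{S}_\epsilon=\mathcal{S}_0$, and in the normal form the slow equation $\dot c=\epsilon h(c)$ on $\{a=b=0\}$ is $\epsilon$-independent after rescaling, so $L_0$ remains locally invariant and $\Lambda_\epsilon$ can be chosen equal to $\Lambda_0$. The main obstacle I anticipate is ensuring that the compact families $\mathcal{I}_\epsilon$, $\mathcal{J}_\epsilon$ remain in the domain of the Fenichel chart long enough for the normal-form estimate to apply; this should be handled by a standard covering argument extending the normal form along trajectories, together with shrinking to a compact piece of $\mathcal{S}_0$ with uniform control of flight times.
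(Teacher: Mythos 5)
Your proposal is correct and follows essentially the same route as the paper's proof: pass to the Fenichel normal form of Theorem \ref{thm_fenichel_coordinate}, take $\Lambda_\epsilon$ and $W^{u,s}_\epsilon(\Lambda_\epsilon)$ to be the preimages of the sets $\{a=b=0\}$, $\{b=0\}$, $\{a=0\}$ with the $c$-coordinate restricted to (the image of) $\Lambda_0$, deduce \eqref{est_dist} from the spectral gap \eqref{cond_spec}, and invoke the remark after Theorem \ref{thm_fenichel_invariant} to take $\mathcal S_\epsilon=\mathcal S_0$ and hence $\Lambda_\epsilon=\Lambda_0$ in the invariant case. You simply spell out details the paper leaves implicit (the local-invariance check via \eqref{cond_E} and the Gronwall estimate), so the two arguments coincide.
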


\begin{proof}
Using Fenichel's coordinates $(a,b,c)$
in Theorem \ref{thm_fenichel_coordinate} for the splitting of $\mathcal S_0$,
we can take $W_\epsilon^u(\Lambda_\epsilon)$ and $W_\epsilon^s(\Lambda_\epsilon)$
to be the pre-images of the sets $\{(a,b,c): a=0, c\in \Lambda_0\}$
and $\{(a,b,c): b=0, c\in \Lambda_0\}$, respectively,
in $(x,y)$-space.
From \eqref{cond_spec} we obtain \eqref{est_dist}
with $\mathcal S_\epsilon$ replaced by $\Lambda_\epsilon$.
Suppose $S_0$ is invariant under \eqref{sf_xy} for each $\epsilon$,
then from the remark after Theorem \ref{thm_fenichel_invariant},
we can take $\mathcal S_\epsilon=\mathcal S_0$
and hence $\Lambda_\epsilon=\Lambda_0$
\end{proof}

The system \eqref{sf_abc} is called a \emph{Fenichel normal form} for \eqref{sf_xy},
and the variables $(a,b,c)$ are called \emph{Fenichel coordinates}.

\subsection{Silnikov Boundary Value Problem} \label{subsec_silnikov}
\begin{figure}[t]
\centering
{
\includegraphics[trim = 4.5cm 8.5cm 4.5cm 6.5cm, clip, width=.33\textwidth]{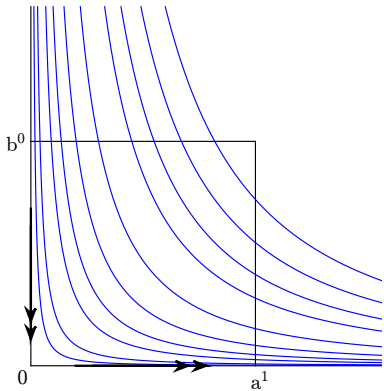}
}
\caption{
Trajectories in the rectangle $\{0\le a\le a^1,0\le b\le b^0\}$
can be parametrized in $T\ge 0$ by $a(T)=a^1$, $b(0)=b^0$.
}
\label{fig_silnikov}
\end{figure}

We have seen in Section \ref{subsec_fenichel}
that fast-slow systems \eqref{sf_xy}
can locally be converted into normal forms \eqref{sf_abc},
where $A^u$ and $A^s$ satisfy the gap condition \eqref{cond_spec},
and $E$ is a small term satisfying \eqref{cond_E}.
If we append the system with the equation $\dot\epsilon=0$
and then replace $c$ by $\tilde c=(c,\epsilon)$,
we obtain a system of the form \beq{deq_abc_center}
  &\dot{a}= A^u(a,b,\tilde c)a\\
  &\dot{b}= A^s(a,b,\tilde c)b\\
  &\dot{\tilde c}= \tilde h(\tilde c)+ E(a,b,\tilde c),
\] for which \eqref{cond_spec} and \eqref{cond_E} are satisfied
with $E$ replaced by $\tilde E$.
For convenience, we will drop the tilde notation in \eqref{deq_abc_center}
in the remaining discussion.

A Silnikov problem is the system \eqref{deq_abc_center}
along with boundary data of the form \beq{bc_silnikov}
  (b,c)(0)=(b^0,c^0),\quad
  a(T)=a^1,
\] where $T\ge 0$.
This boundary value problem was posed in \cite{Silnikov:1967}
to study homoclinic bifurcation.
A heuristic reason for the existence of solutions of a Silnikov problem
is illustrated in Fig \ref{fig_silnikov}.
Consider the simple case $\dot{a}=a$, $\dot{b}=-b$ and $\dot c=0$.
There are infinitely many trajectories
contained in the box $\{0\le a\le a^1,0\le b\le b^0\}$.
We may parametrize the set of trajectories in $T\ge 0$ by $b(0)=b^0$ and $a(T)=a^1$.
On the $a$-axis and $b$-axis,
the trajectories tend to the origin in backward and forward time, respectively.
This suggests that trajectories
near the axes can stay for an arbitrarily long time in the box,
which implies that for any large $T$
there exists a trajectory satisfying $b(0)=b^0$ and $a(T)=a^1$.
When $T$ grows to infinity,
the trajectories approach the axes.
In the general case $\dot a=A^ua$ and $\dot b=A^sb$ in arbitrary dimension,
both $a$- and $b$-spaces consist of solutions tending to the origin
in forward or backward time,
so we have the same conclusion.

The critical manifold for \eqref{deq_abc_center} is $\{a=0,b=0\}$,
on which the system is governed by
the limiting slow system \beq{deq_c_critical}
  \dot{c}= h(c).
\] For a solution $(a(t),b(t),c(t))$ to
the Silnikov boundary value problem \eqref{deq_abc_center} and \eqref{bc_silnikov},
from conditions \eqref{cond_spec} and \eqref{cond_E},
it is natural to expect that
$a(t)$ and $b(t)$ decay to $0$ in backward time and forward time, respectively,
and that $c(t)$ is approximately the solution of \eqref{deq_c_critical}.
A theorem from \cite{Schecter:2008a}
asserts that this is the case:

\begin{thm}[Generalized Deng's Lemma \cite{Schecter:2008a}]
\label{thm_bvp_h}
Consider the system \eqref{deq_abc_center}
satisfying \eqref{cond_spec} and \eqref{cond_E}
with $C^r$ coefficients, $r\ge 1$,
defined on the closure of a bounded open set
$B_{k,\Delta}\times B_{m,\Delta}\times V
\subset \mathbb R^k\times \mathbb R^m\times \mathbb R^l$,
where $B_{k,\Delta}=\{a\in\mathbb R^k: |a|<\Delta\}$,
$\Delta>0$, and $V$ is a bounded open set in $\mathbb R^l$.

Let $K_0$ and $K_1$ be compact subsets of $V$
such that $K_0\subset \mathrm{Int}(K_1)$.
For each $c^0\in K_0$ let $J_{c^0}$ be the maximal interval
such that $\phi(t,c^0)\in \mathrm{Int}(K_1)$ for all $t\in J_{c^0}$,
where $\phi(t,c^0)$ is the solution of \eqref{deq_c_critical} with initial value $c^0$.
Let $\nu>0$ be the number in \eqref{cond_spec}.
Suppose there exists $\beta>0$ such that $\tilde \nu:= \nu- r\beta>0$ and \[
  |\phi(t,c^0)|\le Me^{\beta|t|}\quad\forall\; t\in J_{c^0}.
\] Then there is a number $\delta_0>0$
such that if $|a^1|<\delta_0$, $|b^0|<\delta_0$, $c^0\in V_0$, and ${T}>0$ is in $J_{c^0}$,
then the Silnikov boundary value problem \eqref{deq_abc_center} and \eqref{bc_silnikov}
has a solution $(a,b,c)(t,{T},a^1,b^0,c^0)$ on the interval $0\le t\le {T}$.
Moreover, there is a number $K>0$ such that for all $(t,{T},a^1,b^0,c^0)$ as above
and for all multi-indices $\mathbf{i}$ with $|\mathbf{i}|\le r$, \beq{est_Di_bvp}
  &|D_{\mathbf{i}}a(t,{T},a^1,b^0,c^0)|\le Ke^{-\tilde\nu ({T}-t)}\\
  &|D_{\mathbf{i}}b(t,{T},a^1,b^0,c^0)|\le Ke^{-\tilde\nu t}\\
  &|D_{\mathbf{i}}c(t,{T},a^1,b^0,c^0)-D_{\mathbf{i}}\phi(t,c^0)|
  \le Ke^{-\tilde\nu T}.
\]
\end{thm}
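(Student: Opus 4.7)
The plan is to convert the boundary value problem \eqref{deq_abc_center}, \eqref{bc_silnikov} into a coupled system of integral equations and solve it by a contraction mapping on a suitably weighted Banach space of continuous triples $(a,b,c):[0,T]\to\mathbb{R}^k\times\mathbb{R}^m\times\mathbb{R}^l$. For the unstable component I would integrate backward from $T$ using the evolution generated by $A^u$, so that $a(T)=a^1$ is built in; for the stable component, forward from $0$ using the evolution generated by $A^s$, so that $b(0)=b^0$ is built in; and for $c$, forward from $0$ via variation-of-constants against the flow $\phi(\cdot,c^0)$ of \eqref{deq_c_critical}. Because $A^u$, $A^s$, and $E$ all depend on the full triple, the three equations must be handled simultaneously as a nonlinear fixed point.

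The structural input that makes the scheme close is \eqref{cond_E}: since $E$ vanishes on $\{a=0\}\cup\{b=0\}$, a Taylor expansion yields $|E(a,b,c)|\le C|a|\,|b|$, with analogous bounds for its derivatives. Combined with the a priori linear estimates $|a(t)|\le|a^1|\,e^{-2\nu(T-t)}$ and $|b(t)|\le|b^0|\,e^{-2\nu t}$ coming from the gap condition \eqref{cond_spec}, one obtains the uniform pointwise bound $|a(t)||b(t)|\le \delta_0^{\,2}\,e^{-2\nu T}$ on $[0,T]$. This is what makes the $E$-forcing of the slow equation exponentially small in $T$ and is the mechanism by which $c(t)$ shadows $\phi(t,c^0)$ to order $e^{-\tilde\nu T}$.

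Concretely, I would equip triples with the norm $\|(a,b,c)\|=\max\{\sup_t e^{\tilde\nu(T-t)}|a(t)|,\ \sup_t e^{\tilde\nu t}|b(t)|,\ \sup_t e^{\tilde\nu T}|c(t)-\phi(t,c^0)|\}$ and show that the resulting integral operator sends a ball of suitable radius into itself and is a strict contraction once $\delta_0$ is small. The gap condition \eqref{cond_spec} supplies the decay rate $2\nu>\tilde\nu$ for the $a$- and $b$-blocks, the product bound above supplies the contraction of the $c$-block, and the coupling terms arising when $A^u$ and $A^s$ are evaluated at a varying iterate are absorbed using Gronwall estimates along $\phi(\cdot,c^0)$. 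This yields existence of $(a,b,c)(t,T,a^1,b^0,c^0)$ satisfying the first two lines of \eqref{est_Di_bvp} at the $|\mathbf{i}|=0$ level, together with the shadowing statement on $c$.

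For the $C^r$ smoothness and the derivative bounds \eqref{est_Di_bvp}, the plan is to differentiate the fixed-point equations with respect to the parameters $(a^1,b^0,c^0,T)$ and to rerun the contraction argument on the resulting linear variational system, iterating up to $r$ times. Each differentiation picks up a factor of at most $e^{\beta|t|}$ from the variational flow of $h$ along $\phi$, so after $r$ derivatives the worst growth along $[0,T]$ is $e^{r\beta T}$; the hypothesis $\tilde\nu=\nu-r\beta>0$ is precisely what is needed for these factors to be absorbed by the exponential weights so that the scheme still closes. The main technical obstacle I anticipate lies in this derivative bookkeeping: one must verify that each differentiation of $E$ continues to respect the $|a|\,|b|$ factorization, so that neither factor loses its one-sided exponential decay, and that the iterated variational estimates along $\phi$ preserve exactly the exponent $\tilde\nu$ rather than degrading it to something weaker that would break the closure.
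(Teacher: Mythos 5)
Your proposal follows essentially the same route as the paper's sketch (which itself follows Schecter): the same integral-equation reformulation with $a$ integrated backward from $T$, $b$ forward from $0$, and $z=c-\phi(\cdot,c^0)$ by variation of constants along the slow flow, the same exponentially weighted norm $\sup_t\big(e^{\tilde\nu(T-t)}|a|+e^{\tilde\nu t}|b|+e^{\tilde\nu T}|z|\big)$, and the same Banach fixed-point argument, with the derivative bounds \eqref{est_Di_bvp} obtained by differentiating the fixed-point equation and using $\tilde\nu=\nu-r\beta>0$, exactly as in the cited full proof. No substantive differences to report.
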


\begin{proof}[Sketch of Proof] 
Here we sketch the proof in \cite{Schecter:2008}.
Write \eqref{deq_abc_center} as \[
  &\dot{a}= \tilde{A}^u(t,c^0)a+ f(t,c^0,a,b,z)\\
  &\dot{b}= \tilde{A}^s(t,c^0)b+ g(t,c^0,a,b,z)\\
  &\dot{z}= \tilde{A}^c(t,c^0)z+ \theta(t,c^0,z)+ \tilde{E}(t,c^0,a,b,z),
\] where \[
  &\tilde{A}^i(t,c^0)= {A}^i(0,0,\phi(t,c^0)),\quad i=u,s,\\
  &\tilde{A}^c(t,c^0)= Dh\big|_{\phi(t,c^0)}
\] and \[
  \tilde{E}(t,c^0,a,b,z)= E(a,b,\phi(t,c^0)+z).
\] Let $\Phi^i(t,s,c^0)$ be the solution operator for $\tilde A^i(t,c^0)$, $i=u,s,c$.
Then $(a(t),b(t),c(t))$ is a solution of Silnikov problem \eqref{deq_abc_center} and \eqref{bc_silnikov}
if and only if $c(t)= \phi(t,c^0)+z(t)$
and $\eta(t)= (a(t),b(t),z(t))$ satisfies \beq{eq_abz_integral}
  &a(t)= \Phi^u(t,T,c^0)a^1- \int_t^T \Phi^u(t,s,c^0)f(s,c^0,\eta(s))\;ds\\
  &b(t)= \Phi^s(t,0,c^0)b^0+ \int_0^t \Phi^s(t,s,c^0)g(s,c^0,\eta(s))\;ds\\
  &z(t)= \int_0^t \Phi^c(t,s,c^0)\big(\theta(s,c^0,z(s))+ \tilde{E}(s,c^0,\eta(s))\big)\;ds.
\] Define an linear operator $\mathcal L$
by the right-hand side of \eqref{eq_abz_integral}
for functions $\eta(t)= (a(t),b(t),z(t))$.
It can be shown that the restriction of $\mathcal L$
on a neighborhood of $0$
in the space of functions $\eta(t)= (a(t),b(t),z(t))$ equipped with the norm \[
  \|\eta\|_j
  = \sup_{0\le t\le T}
  \big(
    e^{\tilde{\nu}(T-t)}|a(t)|+ e^{\tilde{\nu}t}|b(t)|+ e^{\tilde{\nu}T}|z(t)|
  \big)
\] is a contraction mapping.
Hence the existence of solution of \eqref{deq_abc_center} and \eqref{bc_silnikov}
follows from the standard Banach fixed point theorem.
\end{proof}

\begin{rmk}
Theorem \ref{thm_bvp_h}
is a generalization of the \emph{Strong $\lambda$-Lemma} in Deng \cite{Deng:1990},
and \emph{$C^r$-Inclination Theorem} in Brunovsky \cite{Brunovsky:1999}.
In Deng's work,
the boundary data lie near an equilibrium that may nonhyperbolic.
In Brunovsky's work,
the boundary data lie near a solution of a rectifiable slow flow
on a normally hyperbolic invariant manifold.
Schecter's work allows
considering more general flows on normally hyperbolic invariant manifolds.
\end{rmk}

\subsection{The Exchange Lemma} \label{subsec_EL}
Consider \eqref{sf_abc} as a special case of \eqref{deq_abc_center},
and recall that \eqref{sf_abc} is the normal form
of fast-slow systems \eqref{sf_xy}.
We will use Theorem \ref{thm_bvp_h}
to analyze Silnikov problems for fast-slow systems \eqref{sf_xy}.
The result turns out to be 
a variation of the $(k+\sigma)$-Exchange Lemma \cite{Jones:2009,Tin:1994}.

The Silnikov problem for \eqref{sf_abc} corresponds to the boundary data \beq{bc_abc_eps}
  a(\tau/\epsilon)= a^1,\quad
  (b,c)(0)= (b^0,c^0),
\] with given $(a^1,b^0,c^0)\in \mathbb R^k\times \mathbb R^m\times \mathbb R^l$
and $\tau>0$.
It can be interpreted as finding trajectories for \eqref{sf_abc}
connecting the sets $\{b=b^0,c=c^0\}$ and $\{a=a^1\}$,
with prescribed time interval $0\le t\le \tau/\epsilon$;
see Fig \ref{fig_EL_abc}.
Note that the set $\{b=b^0,c=c^0\}$ is of dimension $k$.
The Exchange Lemma is a tool tracking the $(k+1)$-manifold $\mathcal I_\epsilon^*$
that evolves from a $k$-manifold $\mathcal I_\epsilon$
which is transverse to the center-stable manifold $\{a=0\}$.
The theory of Exchange Lemma
was first developed in \cite{Jones:1991,Jones:1994,Jones:1996}
to study singularly perturbed systems
near a normally hyperbolic, locally invariant manifold.
Some generalizations of the Exchange Lemma
for a broader class of systems
were given by W.\ Liu \cite{Liu:2000} and Schecter \cite{Schecter:2008a}.

Another generalization,
given by Tin \cite{Tin:1994},
is the $(k+\sigma)$-Exchange Lemma, $1\le \sigma\le l$,
which tracks the $(k+\sigma)$-manifold $\mathcal I_\epsilon^*$
which evolves from a $(k+\sigma-1)$-manifold $\mathcal I_\epsilon=\{b=b^0,c^0\in \Lambda\}$,
where $\Lambda$ is a $(\sigma-1)$-manifold.
A major difference between the $(k+\sigma)$-Exchange Lemma
and the general Exchange Lemma in \cite{Schecter:2008a}
is that the estimates \eqref{est_Di_bvp} for the derivatives in slow variables
were not considered in \cite{Schecter:2008a}.

We analyze Silnikov problems
for fast-slow systems in normal form \eqref{sf_abc} in Lemma \ref{lem_bvp_EL},
and then, in Theorem \ref{thm_EL}, return to \eqref{sf_xy}
to present a version of the $(k+\sigma)$-Exchange Lemma.

\begin{figure}[t]
\centering
{
\includegraphics[trim = 7cm 7.5cm 3.4cm 9cm, clip, width=.4\textwidth]{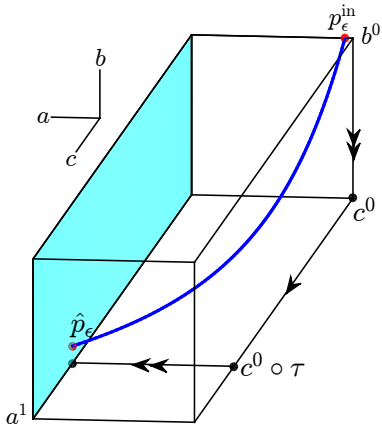}
}
\caption{
The entry point $(a^\din_\epsilon,b^0,c^0)$
is close to $(0,b^0,c^0)$,
and the exit point $(a^1,\hat{b}_\epsilon,\hat{c}_\epsilon)$
is close to $(a^1,0,c^0\circ\tau)$, as $\epsilon\to 0$.
}
\label{fig_EL_abc}
\end{figure}

\begin{lem}\label{lem_bvp_EL}
Consider a system of the form \eqref{sf_abc}
satisfying \eqref{cond_spec} and \eqref{cond_E}
defined on the closure of 
a bounced open set
$B_{k,\Delta}\times B_{m,\Delta}\times V
\subset \mathbb R^k\times\mathbb R^m\times\mathbb R^l$,
where the coefficients are $C^r$ for some integer $r\ge 0$.
Let $\Lambda\subset V$ be a $(\sigma-1)$-dimensional manifold, $1\le \sigma\le l$
and $\tau_0>0$. Suppose \[
  c\circ [0,\tau_0]\subset V\quad
  \forall\; c\in \Lambda,
\] where $\circ$ denotes the flow for the limiting slow system \eqref{slow_xy}.
Let $J\subset (0,\tau_0)$ be a closed interval
and $\mathcal A\subset B_{k,\Delta}\setminus\{0\}$ be a compact set.
Then for each small $\epsilon>0$
and $(a^1,c^0,\tau)\in \mathcal A\times \Lambda\times J$,
the boundary value problem \eqref{sf_xy} and \eqref{bc_abc_eps}
has a unique solution,
denoted by $(a,b,c)(t;\tau,a^1,b^0,c^0,\epsilon)$, $t\in [0,\tau/\epsilon]$.
Moreover, if we set \beq{def_peps}
  p_\epsilon= (a,b,c)(0;\tau,a^1,b^0,c^0,\epsilon),\quad
  q_\epsilon= (a,b,c)(\tau/\epsilon;\tau,a^1,b^0,c^0,\epsilon),
\] then \beq{est_pin_EL_abc}
  \|p_\epsilon- (0,b^0,c^0)\|_{C^{r}(\mathcal A\times\Lambda\times J)}
  + \|{q}_\epsilon- (a^1,0,c^0\circ \tau)\|_{C^{r}(\mathcal A\times\Lambda\times J)}
  \le Ce^{-\tilde\nu/\epsilon}
\] for some positive constants $\tilde C$ and $\tilde\nu$.
See Fig \ref{fig_EL_abc}.
\end{lem}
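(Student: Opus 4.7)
The plan is to reduce Lemma \ref{lem_bvp_EL} to Theorem \ref{thm_bvp_h} (Generalized Deng's Lemma) by the same augmentation device used to pass from \eqref{sf_abc} to \eqref{deq_abc_center}: append the equation $\dot\epsilon=0$ and regard $\tilde c=(c,\epsilon)$ as the center variable. The system then has the form \eqref{deq_abc_center} with $\tilde h(\tilde c)=(\epsilon h(c),0)$ and $\tilde E=(\epsilon E,0)$, the hyperbolic gap \eqref{cond_spec} is unchanged, and \eqref{cond_E} is preserved since $E$ already vanishes on $\{a=0\}\cup\{b=0\}$. The Silnikov data \eqref{bc_abc_eps} match the form required by Theorem \ref{thm_bvp_h} with $T=\tau/\epsilon$.

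First I would verify the remaining hypothesis of Theorem \ref{thm_bvp_h}. The flow of the augmented slow system $\dot{\tilde c}=\tilde h(\tilde c)$ is $(\phi(\epsilon t,c^0),\epsilon)$ where $\phi$ denotes the flow of \eqref{slow_xy}. On $t\in[0,\tau/\epsilon]$ with $\tau\in J\subset(0,\tau_0)$, this trajectory lies in a fixed compact subset of $V\times[0,\epsilon_0]$ by the hypothesis $c\circ[0,\tau_0]\subset V$, so we can choose the constant $\beta$ in Theorem \ref{thm_bvp_h} arbitrarily small, in particular making $\tilde\nu=\nu-r\beta>0$. Theorem \ref{thm_bvp_h} therefore produces a solution $(a,b,c)(t;\tau,a^1,b^0,c^0,\epsilon)$ on $[0,\tau/\epsilon]$, and the bounds \eqref{est_Di_bvp} give, at $t=0$, that $|D_{\mathbf i}a(0)|\le Ke^{-\tilde\nu\tau/\epsilon}$ and, at $t=\tau/\epsilon$, that $|D_{\mathbf i}b(\tau/\epsilon)|$ and $|D_{\mathbf i}(c(\tau/\epsilon)-\phi(\tau,c^0))|$ are similarly bounded. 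Substituting into \eqref{def_peps} immediately gives the estimate \eqref{est_pin_EL_abc} in the $C^0$ norm, with $\tilde\nu$ replaced by $\tilde\nu\tau_{\min}$ where $\tau_{\min}=\min J>0$.

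For the full $C^r$ norm on the parameter set $\mathcal A\times\Lambda\times J$, I would combine \eqref{est_Di_bvp} with the chain rule. Derivatives with respect to $a^1$ and $c^0$ are covered directly by the multi-index estimate in Theorem \ref{thm_bvp_h}, restricting $c^0$ to the tangent directions of $\Lambda$. Derivatives with respect to $\tau$ act through $T=\tau/\epsilon$, each producing a polynomial factor in $\epsilon^{-1}$; these are absorbed into the exponential $e^{-\tilde\nu\tau_{\min}/\epsilon}$ by a standard shrink, replacing $\tilde\nu\tau_{\min}$ by any smaller positive $\tilde\nu'$ for $\epsilon$ sufficiently small. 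Uniqueness in the lemma statement follows from the hyperbolic structure: because $\mathrm{Re}\,\mathrm{Spec}\,A^u>2\nu$, any solution of the BVP with $|a(\tau/\epsilon)|=|a^1|$ bounded must satisfy $|a(0)|\lesssim e^{-\nu\tau/\epsilon}$ and, by symmetry, $|b(\tau/\epsilon)|\lesssim e^{-\nu\tau/\epsilon}$, which places the solution inside the small ball where the contraction mapping in the proof sketch of Theorem \ref{thm_bvp_h} has a unique fixed point.

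The main technical obstacle I anticipate is precisely the $C^r$ estimate in $\tau$, because the endpoint $T$ scales like $1/\epsilon$; care is needed in bookkeeping to confirm that all powers of $\epsilon^{-1}$ coming from differentiating $T=\tau/\epsilon$ are swallowed by the exponential, which ultimately forces the slight reduction of the rate from $\tilde\nu\tau_{\min}$ to an arbitrary smaller $\tilde\nu'$ in \eqref{est_pin_EL_abc}. Everything else is a routine translation between the unrescaled Silnikov setting of Theorem \ref{thm_bvp_h} and the $\epsilon$-dependent fast-slow setting of \eqref{sf_abc}.
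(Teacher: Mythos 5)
Your proposal is correct and follows essentially the same route as the paper: existence comes from Theorem \ref{thm_bvp_h} applied to the $\epsilon$-augmented system, and the endpoint estimate \eqref{est_pin_EL_abc} is extracted from the derivative bounds \eqref{est_Di_bvp} evaluated at $t=0$ and $t=T=\tau/\epsilon$. The only difference is cosmetic: you obtain the $\tau$-derivative bounds by the chain rule through $T=\tau/\epsilon$ (absorbing the resulting powers of $\epsilon^{-1}$ into the exponential), whereas the paper differentiates the integral representation \eqref{hat_a_integral} from the proof of Theorem \ref{thm_bvp_h} --- the same underlying computation.
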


\begin{proof}[Sketch of Proof]
Existence of solutions follows directly from Theorem \ref{thm_bvp_h},
so it remains to prove \eqref{est_pin_EL_abc}.
Write $p_\epsilon=(a^\din_\epsilon,b^0,c^0)$
and $q_\epsilon=(a^1,\hat{b}_\epsilon,\hat{c}_\epsilon)$,
then \eqref{est_pin_EL_abc} is equivalent to \beq{est_abc_EL}
  \|(a^\din_\epsilon,\hat{b}_\epsilon,\hat{c}_\epsilon-c^0\circ \tau)\|
  {}_{C^r(\mathcal A\times \Lambda\times J)}
  \le \tilde C e^{-\tilde\nu/\epsilon}.
\]
The estimate of the derivatives in $(a^1,c^0)\in \mathcal A\times \Lambda$ in \eqref{est_abc_EL}
follows directly from \eqref{est_Di_bvp}.
To prove the estimate of the derivatives in $\tau\in J$,
note that from \eqref{eq_abz_integral} we have \beq{hat_a_integral}
  &a^\din_\epsilon= \Phi^u(0,\tau/\epsilon,c^0)a^1
  - \int_0^{\tau/\epsilon} \Phi^u(0,s,c^0)f(s,c^0,\eta(s))\;ds\\
  &\hat{b}_\epsilon= \Phi^s(\tau/\epsilon,0,c^0)b^0
  + \int_0^{\tau/\epsilon} \Phi^s(\tau/\epsilon,s,c^0)g(s,c^0,\eta(s))\;ds\\
  &\hat{c}_\epsilon= c^0\circ \tau
  + \int_0^{\tau/\epsilon} \Phi^c(\tau/\epsilon,s,c^0)\big(\theta(s,c^0,z(s))+ \tilde{E}(s,c^0,\eta(s))\big)\;ds.
\] As in the proof of Theorem \ref{thm_bvp_h},
it can be shown that the derivatives of the integrands in \eqref{hat_a_integral} are exponentially small,
so we obtain \eqref{est_abc_EL}.
\end{proof}

\begin{figure}[t]
\centering
{
\includegraphics[trim = 7cm 7.5cm 3.4cm 9cm, clip, width=.46\textwidth]{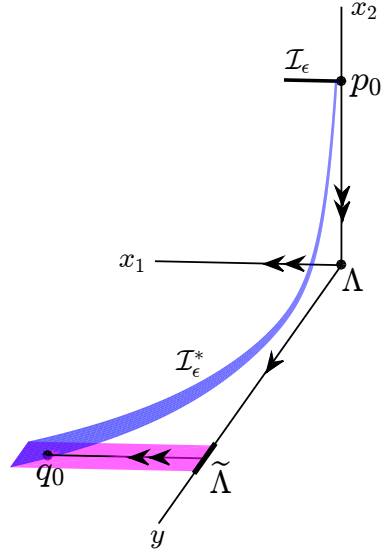}
}
\caption{
The $(k+\sigma)$-Exchange Lemma asserts that
$\mathcal I_\epsilon^*$ is $C^1$-close to 
$W^u_0(\tilde\Lambda)$
in a neighborhood of $q_0$.
}
\label{fig_EL_xy}
\end{figure}

The following theorem is a modification
of the $(k+\sigma)$-Exchange Lemma.
The main difference 
is that in this version we assert the existence of certain trajectories,
while in the original version those trajectories were assumed to exist.
The proof of the original theorem \cite{Tin:1994}
is based on tracking tangent spaces to an invariant manifold
using linearized differential equations in terms of differential forms,
while the approach we present below relies on estimates for solution operators,
following closely to the proof of
the general Exchange Lemma in \cite{Schecter:2008}.

\begin{thm}
\label{thm_EL}
Consider a system of the form \eqref{sf_xy}
where $(x,y)\in \mathbb R^n\times\mathbb R^l$,
and $f$ and $g$ are $C^r$ functions for some $r\ge 2$.
Let $\mathcal S_0$ be a normally hyperbolic critical manifold for \eqref{fast_xy},
and suppose $D_xf|_{\mathcal S^0}$
has a splitting of $k$ unstable eigenvalues and $m$ stable eigenvalues, $k+m=n$.
Let $\bar{q}_0\in W^u_0(\mathcal S_0)\setminus \mathcal S_0$,
$\bar{p}_0\in W^s_0(\mathcal S_0)\setminus \mathcal S_0$, $\bar{\tau}_0>0$,
and assume \beq{link_pq0_EL}
  \pi^s(\bar{p}_0)\circ [0,\bar{\tau}_0]
  \subset S_0
  \quad\text{and}\quad
  \pi^u(\bar{q}_0)
  = \pi^s(\bar{p}_0)\circ \tau_0,
\] where $\circ$ denotes the flow for the limiting slow system \eqref{slow_xy},
and $\pi^{s,u}$ are the projections into $\mathcal S_0$ along stable/unstable fibers
with respect to the limiting fast system \eqref{fast_xy}.
Let $\{\mathcal I_\epsilon\}_{\epsilon\in [0,\epsilon_0]}$
be a $C^r$ family of $(k+\sigma-1)$-dimensional manifolds, $1\le \sigma\le l$,
and suppose \begin{enumerate}
  \item[$\mathrm{(T1)}$]
  $\mathcal I_0$ is transverse to $W^s(\mathcal S_0)$ at $p_0$,
  and $\Lambda:=\pi^s(\mathcal I_0\cap W^s(\mathcal S_0))$
  is of dimension $(\sigma-1)$.
   \item[$\mathrm{(T2)}$]
   the slow flow \eqref{slow_xy} is not tangent to $\Lambda$ at $\pi^s(\bar{p}_0)$.
  \item[$\mathrm{(T3)}$]
  The trajectory $\pi^s(p_0)\circ [0,\tau_0]$ is rectifiable and not self-intersecting.
\end{enumerate}
Let \beq{def_Istar_EL}
  \mathcal I_\epsilon^*= \mathcal I_\epsilon\cdot [0,\infty),
\] where $\cdot$ denotes the flow for \eqref{sf_xy}.
Choose a compact interval $J\subset (0,\infty)$ containing $\bar{\tau}_0$
satisfying $\Lambda\circ J\subset \mathcal S_0$,
and set $\widetilde\Lambda=\Lambda\circ J$.
Then there exists a neighborhood $V_0$ of $\bar{q}_0$
such that \beq{closeness_EL}
  \text{
    $\mathcal I_\epsilon^*\cap V_0$
    is $C^{r-2}$ $O(\epsilon)$-close to $W^u_0(\widetilde\Lambda)\cap V_0$.
  }
\] See Fig \ref{fig_EL_xy}.
Moreover, given any sequence $\bar{q}_\epsilon\in \mathcal I_\epsilon^*\cap V_0$
such that $\bar{q}_\epsilon\to \bar{q}_0$,
there exists a sequence
$(\bar{p}_\epsilon,\bar{\tau}_\epsilon)\in \mathcal I_\epsilon\times J$
which converges to $(\bar{p}_0,\bar{\tau}_0)$ and satisfies that,
setting $T_\epsilon=\bar{\tau}_\epsilon/\epsilon$, \beq{link_pq_EL}
  \bar{q}_\epsilon=\bar{p}_\epsilon\cdot T_\epsilon
  \quad\forall\; \epsilon>0,
\] and \beq{est_Teps_EL}
  T_\epsilon= \big(
    \tau_0+o(1)
  \big)\epsilon^{-1}.
\]
\end{thm}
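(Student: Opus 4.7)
The plan is to reduce the theorem to the Silnikov estimate of Lemma \ref{lem_bvp_EL} by passing to Fenichel coordinates, and then to parametrize $\mathcal I_\epsilon^*$ explicitly in a neighborhood of $\bar q_0$. First, by Theorem \ref{thm_fenichel_coordinate}, in a neighborhood of $\mathcal S_0$ the system \eqref{sf_xy} takes the normal form \eqref{sf_abc} in coordinates $(a,b,c)\in\mathbb R^k\times\mathbb R^m\times\mathbb R^l$, with $\mathcal S_0=\{a=0,b=0\}$, $W^s_0(\mathcal S_0)=\{a=0\}$, $W^u_0(\mathcal S_0)=\{b=0\}$, and fiber projections $\pi^s(0,b,c)=c$, $\pi^u(a,0,c)=c$. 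Using the rectifiability in (T3), I would further refine the $c$-coordinate so that the slow flow \eqref{slow_xy} becomes translation along one coordinate on a neighborhood of $\pi^s(\bar p_0)\circ[0,\bar\tau_0]$; combined with (T2), this guarantees that $\widetilde\Lambda=\Lambda\circ J$ is a genuine embedded $\sigma$-dimensional submanifold of $\mathcal S_0$. Near $\bar p_0$, transversality (T1) lets me represent
\[
  \mathcal I_\epsilon = \bigl\{\,(a,B_\epsilon(a,c^*),C_\epsilon(a,c^*)) : a\in U_k,\; c^*\in\Lambda\,\bigr\},
\]
for a small neighborhood $U_k\subset\mathbb R^k$ of $0$, with $B_\epsilon,C_\epsilon$ being $C^{r-2}$-close to their $\epsilon=0$ counterparts and $C_0(0,c^*)=c^*$.

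Next, for each $(a^1,c^*,\tau)\in\mathcal A\times\Lambda\times J$, with $\mathcal A,\Lambda,J$ taken as small compact neighborhoods of the data $a^1_0,c^{*}_0,\bar\tau_0$ (where $a^1_0$ is the $a$-component of $\bar q_0$), Lemma \ref{lem_bvp_EL} provides a unique trajectory of \eqref{sf_abc} on $[0,\tau/\epsilon]$ with prescribed $(b,c)(0)$ and $a(\tau/\epsilon)=a^1$, whose entry $a$-value $a^{\din}_\epsilon$ is exponentially small in $\epsilon$ together with its $C^{r-2}$ derivatives jointly in $(a^1,b^0,c^0,\tau)$. Requiring that the trajectory start on $\mathcal I_\epsilon$ amounts to the fixed-point equation
\[
  a = a^{\din}_\epsilon\bigl(a^1,B_\epsilon(a,c^*),C_\epsilon(a,c^*),\tau\bigr),
\]
which by exponential smallness has a unique solution $a=a_\epsilon(a^1,c^*,\tau)$ (itself exponentially small in $C^{r-2}$), obtained by the contraction mapping principle. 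This yields a $C^{r-2}$ parametrization $\Psi_\epsilon:(a^1,c^*,\tau)\mapsto \bar q_\epsilon(a^1,c^*,\tau)$ of a $(k+\sigma)$-dimensional piece of $\mathcal I_\epsilon^*$ sitting in a neighborhood of $\bar q_0$.

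Finally, the exit-point estimate \eqref{est_pin_EL_abc} gives $\Psi_\epsilon(a^1,c^*,\tau)=(a^1,0,c^*\circ\tau)+O(e^{-\tilde\nu/\epsilon})$ in $C^{r-2}$. Since the limit map $\Psi_0:(a^1,c^*,\tau)\mapsto(a^1,0,c^*\circ\tau)$ is exactly a parametrization of $W^u_0(\widetilde\Lambda)$ and is an embedding by (T2) and (T3), shrinking $V_0$ suitably establishes \eqref{closeness_EL}. For the second assertion, given $\bar q_\epsilon\to\bar q_0=\Psi_0(a^1_0,c^{*}_0,\bar\tau_0)$, inverting $\Psi_\epsilon$ (a small $C^{r-2}$ perturbation of the embedding $\Psi_0$) locates $(a^1_\epsilon,c^*_\epsilon,\bar\tau_\epsilon)\to(a^1_0,c^{*}_0,\bar\tau_0)$. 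Taking $\bar p_\epsilon$ to be the starting point in $\mathcal I_\epsilon$ of the corresponding trajectory and $T_\epsilon=\bar\tau_\epsilon/\epsilon$ then yields $\bar p_\epsilon\to\bar p_0$, $\bar q_\epsilon=\bar p_\epsilon\cdot T_\epsilon$, and $T_\epsilon=(\bar\tau_0+o(1))\epsilon^{-1}$, as required by \eqref{link_pq_EL} and \eqref{est_Teps_EL}.

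The principal obstacle I expect is the implicit-function step in the middle paragraph: one needs simultaneous $C^{r-2}$ smoothness of $a_\epsilon$ in all of $(a^1,c^*,\tau)$, and this requires the full $\tau$-dependent version of \eqref{est_pin_EL_abc} from Lemma \ref{lem_bvp_EL} rather than merely a pointwise-in-$\tau$ estimate, which is exactly the extra content of that lemma over earlier formulations. A subsidiary difficulty is the geometric bookkeeping with (T2) and (T3), which is what allows $\Psi_0$ to be identified as an embedding on the compact parameter box $\mathcal A\times\Lambda\times J$, so that $\Psi_\epsilon$ remains an embedding for small $\epsilon$ and the comparison in \eqref{closeness_EL} is well-posed.
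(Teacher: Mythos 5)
Your proposal is correct and follows essentially the same route as the paper: pass to the Fenichel normal form, solve the Silnikov boundary value problem via Lemma \ref{lem_bvp_EL} with its $C^{r-2}$ estimates jointly in $(a^1,b^0,c^0,\tau)$, parametrize $\mathcal I_\epsilon^*$ near $\bar q_0$ by $(a^1,c^0,\tau)$, and deduce \eqref{closeness_EL} and then \eqref{link_pq_EL}--\eqref{est_Teps_EL} from the exponential exit estimate \eqref{est_pin_EL_abc}. The only deviations are minor technical ones: where the paper uses $\mathrm{(T1)}$ to normalize $\mathcal I_\epsilon$ to the product $B_{k,\Delta}\times\{b^0\}\times\Lambda$ in Fenichel coordinates, you keep $\mathcal I_\epsilon$ as a graph and add an (exponentially contracting) fixed-point step to force the Silnikov trajectory to start on it, and you recover $(\bar p_\epsilon,\bar\tau_\epsilon)$ by inverting the near-embedding $\Psi_\epsilon$ rather than by the paper's compactness/subsequence argument.
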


\begin{proof}
Under the assumption \eqref{cond_E_weak},
from \cite[Lemma 2.2]{Deng:1990},
after a $C^{r-2}$ change of of coordinates,
we can convert \eqref{sf_xy} to \eqref{sf_abc},
and, from $\mathrm{(T1)}$, we may assume \beq{Ieps_normal_EL}
  \mathcal I_\epsilon
  = B_{k,\Delta}\times \{b^0\}\times \Lambda
\] for some constant $b^0\in B_{m,\Delta}\setminus\{0\}$.

Since $\bar{q}_0\in W^u_0(\mathcal S_0)\setminus \mathcal S_0$,
we have $a(\bar{q}_0)\ne 0$ and $b(\bar{q}_0)= 0$,
where $a(\bar{q}_0)$ and $b(\bar{q}_0)$
denote the $a$- and $b$-coordinates of $\bar{q}_0$.
Set \beq{def_A_EL}
  \mathcal A=\{a\in \mathbb R^k: |a-a(\bar{q}_0)|<\Delta_1\}
\] for some positive number $\Delta_1<\frac12\min\{\Delta,|a(\bar{q}_0)|\}$,
so that $\mathcal A\subset B_{k,\Delta}\setminus\{0\}$.
Let $p_\epsilon$ and $q_\epsilon$ be
the functions of $(a^1,c^0,\tau)\in \mathcal A\times \Lambda\times J$
defined by \eqref{def_peps}.
From \eqref{Ieps_normal_EL} we see that $(p_\epsilon,\tau)$
parametrizes $\mathcal I_\epsilon\times J$ in a neighborhood of $(p_0,\tau_0)$.
Hence $q_\epsilon$
parametrizes $\mathcal I_\epsilon^*$
in neighborhoods of $\bar{q}_0$.
The estimate \eqref{est_pin_EL_abc} holds with $r$ replaced by $r-2$.
In particular, \[
  \|{q}_\epsilon- (a^1,0,c^0\circ \tau)\|_{C^{r-2}(\mathcal A\times\Lambda\times J)}
  \le Ce^{-\tilde\nu/\epsilon}.
\]
Note that \beq{WutLambda_EL}
  W^u(\widetilde\Lambda)
  &= \{(a,b,c): b=0, c\in \widetilde\Lambda\}\\
  &= \{(a,b,c^0\circ \tau): b=0, c^0\in \Lambda,\tau\in J\},
\] so we obtain \eqref{closeness_EL}.

Next we consider the sequence $\bar{q}_\epsilon\in \mathcal I_\epsilon$ given in the statement.
Choose $(a^1_\epsilon,c^0_\epsilon,\tau_\epsilon)\in \mathcal A\times \Lambda\times J$
such that $\bar{q}_\epsilon= {q}_\epsilon(a^1_\epsilon,c^0_\epsilon,\tau_\epsilon)$,
and set $\bar{p}_\epsilon= {p}_\epsilon(a^1_\epsilon,c^0_\epsilon,\tau_\epsilon)$.
Then by definition $\bar{q}_\epsilon= \bar{p}_\epsilon\cdot (\bar{\tau}_\epsilon/\epsilon)$.
From $\mathrm{(T2)}$ and $\mathrm{(T3)}$,
$\widetilde\Lambda$
is a $\sigma$-dimensional manifold,
and for any $c^1\in \widetilde\Lambda$,
there exists unique $(c^0,\tau_0)\in \Lambda\times J$
such that $c^1=c^0\circ \tau_0$.
Hence \eqref{link_pq_EL} uniquely determines
$\bar{p}_0\in \mathcal I_0\cap W^s(\Lambda)$ and $\bar{\tau}_0\in J$.
To show $p_\epsilon\to \bar{p}_0$ and $\tau_\epsilon\to \bar{\tau}_0$,
since $(p_\epsilon,\tau_\epsilon)$ lies in the compact set $\Lambda\times J$,
it suffices to show that
every convergent subsequence
of $\{(c_{\epsilon},\tau_{\epsilon})\}$
converges to $(\bar{c}^0,\bar{\tau}_0)$.
Note that from the equation for $\hat{c}_\epsilon$ in \eqref{hat_a_integral},
we have \beq{est_c01_EL}
  \hat{c}_\epsilon
  = c^0_\epsilon\circ \tau_\epsilon+ o(1).
\]
Since $q_\epsilon\to \bar{q}_0\equiv (\bar{a}^1,0,\bar{c}^1)$,
given any convergent subsequence $(c_{\epsilon j},\tau_{\epsilon j})$
of $(c_{\epsilon},\tau_{\epsilon})$,
say $(c_{\epsilon j},\tau_{\epsilon j})\to (\tilde{c}^0,\tilde{\tau}_0)$,
from \eqref{est_c01_EL} we obttain $\bar{c}^1= \tilde{c}^0\circ \tilde{\tau}_0$.
From \eqref{link_pq0_EL} we have $\bar{c}^1= \bar{c}^0\circ \bar{\tau}_0$.
Hence $(\tilde{c}^0,\tilde{\tau}_0)=(\bar{c}^0,\bar{\tau}_0)$.
This completes the proof.
\end{proof}

\section{Singular Configuration}
\label{sec_singular_config}
The fast-slow system \eqref{sf_u}
has multiple limiting subsystems corresponding to different time scales.
In this section we will find trajectories, called \emph{singular trajectories}, for those subsystems
such that the union of those trajectories joins the end states $u_L$ and $u_R$.
The union of those singular trajectories is called a \emph{singular configuration}.
In later sections we will show that there are solutions of \eqref{sf_u}
close to the singular configuration.

\subsection{End States $\mathcal U_L$ and $\mathcal U_R$}
\label{subsec_fenichel_UL}
The system \eqref{fast_bv}
has a normally hyperbolic critical manifold \beq{def_s0}
  \mathcal S_0
  = \big\{
    (u,w,\xi):
    f(u)- \xi u - w= 0,
    \xi\ne \mathrm{Re }(\lambda_\pm(u))
  \big\},
\] where $\lambda_{\pm}(u)$ are the eigenvalues of $Df(u)$,
defined in \eqref{def_lambda}.
The limiting slow system for \eqref{sf_bv} is \beq{slow_u}
  &0=f(u)-\xi u- w\\
  &w'=-u\\
  &\xi'=1.
\]
From $\mathrm{(H1)}$
we have $s<\mathrm{Re}(\lambda_\pm(u_L))$,
so $(u_L,w_L,s)\in \mathcal S_0$.
Choose $\delta>0$ so that $s+2\delta<\mathrm{Re}(\lambda_\pm(u_L))$,
and set \beq{def_ul}
  \mathcal U_L
  &= (u_L,w_L,s)\myflow{slow_u} (-\infty,\delta]\\
  &= \{(u,w,\xi): u=u_L, w=w_L-\alpha_1 u_L, \xi= s+\alpha_1, \alpha_1\in(-\infty,\delta]\},
\] where $\myflow{slow_u}$
denotes the flow for \eqref{slow_u}.
It is clear that $\mathcal U_L\subset \mathcal S_0$
is normally hyperbolic with respect to \eqref{fast_bv},
and is locally invariant with respect to \eqref{sf_u}.

Note that each point in $\mathcal U_L$
is a hyperbolic equilibrium for the $2$-dimensional system \eqref{fast_u},
and the unstable manifold $W^u_0(\mathcal U_L)$ is naturally defined.

\begin{prop}\label{prop_ul}
Assume $\mathrm{(H1)}$.
Let $\mathcal U_L$ be defined by \eqref{def_ul}.
Fix any $k\ge 1$.
There exists a family of invariant manifolds
$W_\epsilon^u(\mathcal U_L)$
which are $C^k$ $O(\epsilon)$-close to $W_0^u(\mathcal U_L)$
such that for any continuous family $\{\mathcal I_\epsilon\}_{\epsilon\in [0,\epsilon_0]}$ of
compact sets $\mathcal I_\epsilon\subset W_\epsilon^u(\mathcal U_L)$, \beq{est_dist_ul}
  \mathrm{dist}(p\myflow{sf_u} t,\mathcal U_L)
  \le Ce^{\mu t}
  \quad\forall\; p\in \mathcal I_\epsilon,\; t\le 0, \epsilon\in [0,\epsilon_0],
\] for some positive constants $C$ and $\mu$.
\end{prop}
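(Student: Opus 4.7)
My plan is to apply the Fenichel theory of Section \ref{subsec_fenichel} with $\mathcal U_L$ playing the role of the locally invariant submanifold in Theorem \ref{thm_foliation}. The crucial observation that simplifies the analysis is that $\mathcal U_L$ is invariant under the full perturbed system \eqref{sf_u} for every $\epsilon\ge 0$, not merely under the limiting fast system \eqref{fast_u}; consequently the slow manifold itself does not need to be perturbed in $\epsilon$.

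First I would verify invariance and normal hyperbolicity by direct computation. On $\mathcal U_L$ one has $u\equiv u_L$, $w=w_L-\alpha_1 u_L$, and $\xi=s+\alpha_1$, so \eqref{def_w} gives $f(u)-\xi u-w=f(u_L)-s u_L-w_L=0$, whence $\dot u=0$ on $\mathcal U_L$, while $\dot w=-\epsilon u_L$ and $\dot\xi=\epsilon$ parametrize $\mathcal U_L$ itself. The fast linearization at a point $(u_L,w,\xi)\in\mathcal U_L$ has eigenvalues $\lambda_\pm(u_L)-\xi$; since $\xi\le s+\delta$ on $\mathcal U_L$, hypothesis $\mathrm{(H1)}$ together with the choice of $\delta$ yields $\mathrm{Re}(\lambda_\pm(u_L)-\xi)\ge \mathrm{Re}(\lambda_\pm(u_L))-s-\delta>0$, so every point of $\mathcal U_L$ has a purely unstable $2$-dimensional fast normal bundle.

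Second, to handle the fact that $\mathcal U_L$ is non-compact, I would restrict to compact subsegments $\mathcal U_L^{(M)}=\mathcal U_L\cap\{\alpha_1\ge -M\}$ and apply Theorem \ref{thm_foliation} with $\Lambda_0=\mathcal U_L^{(M)}$. Because $\mathcal U_L^{(M)}$ is itself invariant under \eqref{sf_u} for every $\epsilon$, the same construction that justifies the remark after Theorem \ref{thm_fenichel_invariant} lets us choose $\Lambda_\epsilon=\mathcal U_L^{(M)}$, producing an unstable manifold $W^u_\epsilon(\mathcal U_L^{(M)})$ that is $C^{r-2}$ $O(\epsilon)$-close to $W^u_0(\mathcal U_L^{(M)})$. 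Since the vector field is smooth on the physical region (where $\beta$ is bounded away from $0$), $r$ can be chosen arbitrarily large, giving $C^k$ closeness for any prescribed $k$. Taking the union over increasing $M$ produces $W^u_\epsilon(\mathcal U_L)$; any continuous family of compact sets $\mathcal I_\epsilon\subset W^u_\epsilon(\mathcal U_L)$ lies inside some $W^u_\epsilon(\mathcal U_L^{(M)})$, so \eqref{est_dist_ul} is the specialization of \eqref{est_dist} to that subsegment, with $\mu$ any positive number smaller than the infimum of $\mathrm{Re}(\lambda_\pm(u_L)-\xi)$ over $\mathcal U_L^{(M)}$.

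I expect the main technical subtlety to be the interplay between invariance and the statements of the cited theorems: Theorem \ref{thm_foliation} as stated concludes $\Lambda_\epsilon=\Lambda_0$ only when the full critical manifold $\mathcal S_0$ is invariant under \eqref{sf_u}, which fails here. I would address this by noting that the invariance-preserving construction underlying Theorem \ref{thm_fenichel_invariant} requires only the invariance of the manifold being perturbed, not that of the ambient critical manifold; since $\mathcal U_L^{(M)}$ itself is invariant under \eqref{sf_u}, the same fixed-point argument applies directly to produce the unstable manifold over $\mathcal U_L^{(M)}$ without first passing through a perturbation of $\mathcal S_0$.
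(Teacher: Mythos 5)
Your preliminary computations are right and match the paper's setup: $\mathcal U_L$ is locally invariant under \eqref{sf_u} for every $\epsilon$, and on $\mathcal U_L$ the fast eigenvalues $\lambda_\pm(u_L)-\xi$ have real part at least $\delta>0$ by the choice of $\delta$, so the normal bundle is purely unstable. The gap is in your treatment of non-compactness by exhaustion. For $\epsilon>0$ the base point of the unstable fiber through $p\in\mathcal I_\epsilon$ drifts backward along $\mathcal U_L$ at speed $\epsilon$ in $\xi$, so it leaves any fixed truncation $\mathcal U_L^{(M)}=\mathcal U_L\cap\{\alpha_1\ge -M\}$ after backward time of order $(M+\delta)/\epsilon$. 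Beyond that time the application of Theorem \ref{thm_foliation} to $\mathcal U_L^{(M)}$ (equivalently, to a compact piece of $\mathcal S_0$ containing it) gives no control whatsoever: the orbit has left the neighborhood in which the Fenichel coordinates for that compact piece are defined. In fact $\mathrm{dist}(p\cdot t,\mathcal U_L^{(M)})$ grows linearly in $|t|$ once the base point passes $\alpha_1=-M$, so the claimed ``specialization of \eqref{est_dist} to that subsegment'' cannot hold for all $t\le 0$; the exponential backward estimate in Theorem \ref{thm_foliation} implicitly requires $\Lambda_0$ to be negatively invariant under the slow flow \eqref{slow_u}, a property the full ray $\mathcal U_L$ has but every truncation $\mathcal U_L^{(M)}$ destroys. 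Relatedly, applying the theorem separately for each $M$ produces constants $C_M$, $\mu_M$ and a range of validity $\epsilon_0(M)$ that a priori degenerate as $M\to\infty$ (and manifolds that need not be nested, since Fenichel manifolds are not unique), so the union over $M$ does not by itself yield \eqref{est_dist_ul} with constants independent of how far the backward orbit travels.

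The repair is exactly the point the paper makes: rather than exhausting by compact segments, one applies the foliation construction directly to the non-compact manifold $\mathcal U_L$, observing that it is \emph{uniformly} normally hyperbolic — the spectral gap is bounded below by $\delta$ along all of $\mathcal U_L$ because $\xi-\mathrm{Re}(\lambda_\pm(u_L))<-\delta$ there — and negatively invariant under both the slow flow and \eqref{sf_u}. With these uniform bounds the proof of Theorem \ref{thm_foliation} in Jones's notes goes through verbatim on the unbounded manifold, giving a single family $W^u_\epsilon(\mathcal U_L)$, a uniform $\epsilon_0$, and the estimate \eqref{est_dist_ul} for all $t\le 0$. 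If you prefer to keep a truncation argument, you must track the backward orbit as its base point crosses successive segments and prove that the contraction constants can be taken independent of the segment, which amounts to redoing this uniform non-compact argument rather than quoting the compact statement.
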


\begin{proof}
This follows
from Theorem \ref{thm_foliation}
by taking $\mathcal U_L$ to be $\mathcal{U}_0$.
Although $\mathcal U_L$ is not compact,
it is uniformly normally hyperbolic 
since $\xi-\mathrm{Re}(\lambda_\pm(u_L))<-\delta$ on $\mathcal U_L$,
and the proof of Theorem \ref{thm_foliation}
in \cite[Theorem 4]{Jones:1995} is still valid.
\end{proof}

\begin{rmk}
Proposition \ref{prop_ul}
was also asserted in \cite{Schecter:2004,Liu:2004,Keyfitz:2012}.
\end{rmk}

From $\mathrm{(H1)}$ we also have,
by decreasing $\delta$ if necessary,
$s-2\delta>\mathrm{Re}(\lambda_\pm(u_R))$,
and hence a similar result holds for the set $\mathcal U_R$ defined by
\beq{def_ur}
  \mathcal U_R
  &= (u_R,w_R,s)\myflow{slow_u} [-\delta,\infty)\\
  &= \{(u,w,\xi): u=u_R, w=w_R-\alpha_2 u_R, \xi= s+ \alpha_2, \alpha_2\in [-\delta,\infty)\}.
\]

\begin{prop}
Assume $\mathrm{(H1)}$.
Let $\mathcal U_R$ be defined by \eqref{def_ur}.
Fix any $k\ge 1$.
There exists a family of invariant manifolds
$W_\epsilon^s(\mathcal U_R)$
which are $C^k$ $O(\epsilon)$-close to $W_0^s(\mathcal U_R)$
such that for any continuous family
$\{\mathcal J_\epsilon\}_{\epsilon\in [0,\epsilon_0]}$ 
of compact sets
$\mathcal J_\epsilon\subset W^s_\epsilon(\mathcal U_R)$,
\beq{est_dist_ur}
  \mathrm{dist}(p\myflow{sf_u} t,\mathcal U_R)
  \le Ce^{-\mu t}
  \quad\forall\; p\in \mathcal J_\epsilon,\; t\ge 0, \epsilon\in [0,\epsilon_0],
\] for some positive constants $C$ and $\mu$.
\end{prop}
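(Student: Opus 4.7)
The plan is to mirror the proof of Proposition \ref{prop_ul}, exploiting the symmetry between the left and right end states. Since $\mathrm{(H1)}$ gives $s > \mathrm{Re}(\lambda_\pm(u_R))$ with strict inequality, after shrinking $\delta$ we get $s - 2\delta > \mathrm{Re}(\lambda_\pm(u_R))$, so the eigenvalues $\lambda_\pm(u_R) - \xi$ of the fast linearization along $\mathcal{U}_R$ all have real part bounded above by $-\delta < 0$. Thus $\mathcal{U}_R \subset \mathcal{S}_0$ is a locally invariant, uniformly normally hyperbolic submanifold of the critical set, and the stable directions of the fast flow \eqref{fast_u} at each point of $\mathcal{U}_R$ span a $2$-dimensional subspace, giving the natural definition of $W_0^s(\mathcal{U}_R)$.

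With that setup, I would invoke Theorem \ref{thm_foliation}, applied with $\Lambda_0 = \mathcal{U}_R$ playing the role of the distinguished locally invariant submanifold of $\mathcal{S}_0$. The theorem directly yields a family of locally invariant manifolds $W^s_\epsilon(\mathcal{U}_R)$ that are $C^k$ $O(\epsilon)$-close to $W_0^s(\mathcal{U}_R)$ (by taking $r$ large enough, since the nonlinearity in \eqref{sf_bv} is smooth), together with the exponential tracking estimate \eqref{est_dist} in forward time. Since $\mathcal{U}_R$ is locally invariant under the full flow \eqref{sf_u} for each $\epsilon$ (the slow flow \eqref{slow_u} preserves the set $\{u = u_R,\, w = w_R - \alpha u_R,\, \xi = s + \alpha\}$), the remark following Theorem \ref{thm_fenichel_invariant} lets us take the perturbed base manifold to be $\mathcal{U}_R$ itself rather than a nearby copy.

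The only non-routine issue is that $\mathcal{U}_R$ is unbounded — the parameter $\alpha_2$ ranges over $[-\delta, \infty)$ — so Theorem \ref{thm_foliation} as stated (for compact invariant sets) does not apply verbatim. This is the same obstacle resolved for $\mathcal{U}_L$ in Proposition \ref{prop_ul}, and the same remedy works: the normal hyperbolicity is \emph{uniform} along $\mathcal{U}_R$, because the spectral gap $|\xi - \mathrm{Re}(\lambda_\pm(u_R))| \ge \delta$ holds with a constant independent of $\alpha_2$, and $u_R$, $w_R$ are fixed so the higher-order estimates needed to run Fenichel's construction are globally controlled. Consequently the proof of Theorem 4 in \cite{Jones:1995} carries over without change, yielding the manifolds $W^s_\epsilon(\mathcal{U}_R)$ on all of $\mathcal{U}_R$.

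Finally, the decay estimate \eqref{est_dist_ur} is the forward-time half of \eqref{est_dist} applied to the continuous family $\mathcal{J}_\epsilon \subset W^s_\epsilon(\mathcal{U}_R)$, with the rate $\mu$ determined by (any positive constant strictly less than) the spectral gap $\delta$. I expect no additional technical obstacle beyond the non-compactness bookkeeping already handled for $\mathcal{U}_L$; the entire argument is essentially the time-reversed version of Proposition \ref{prop_ul}.
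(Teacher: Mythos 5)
Your argument is correct and is essentially the paper's own: the paper proves this proposition by exactly the same reduction to Theorem \ref{thm_foliation} used for $\mathcal U_L$ in Proposition \ref{prop_ul}, with the non-compactness of the base manifold handled by the uniform spectral gap $\xi-\mathrm{Re}(\lambda_\pm(u_R))\ge\delta$ so that the construction in \cite[Theorem 4]{Jones:1995} still applies. Your observations that $s-2\delta>\mathrm{Re}(\lambda_\pm(u_R))$ makes all fast eigenvalues along $\mathcal U_R$ uniformly stable, and that local invariance of $\mathcal U_R$ under \eqref{sf_u} lets one keep the base manifold fixed, match the paper's treatment, so no gap remains.
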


\subsection{Intermediate States $\mathcal P_L$ and $\mathcal P_R$} \label{subsec_PL}
Consider the system \eqref{sf_bv}.
In order to study the dynamics at $\{v=+\infty\}$,
we set $r=1/v$ and $\kappa= \epsilon \log(1/r)$.
Then \eqref{sf_bv} is converted, after multiplying the equations by $r$, to 
\beqeps{sf_brk}
  &\dot{\beta}= B_1(\beta)-\xi\beta r-w_1 r\\
  &\dot{r}= -r B_2(\beta)+ \xi r^2+ w_2r^3\\
  &\dot{w_1}=-\epsilon\beta r\\
  &\dot{w_2}=-\epsilon\\
  &\dot{\xi}=\epsilon r\\
  &\dot\kappa= \epsilon \big(
    B_2(\beta)+ \xi r+ w_2r^2
  \big).
\] 
Note that the time variable in \eqref{sf_brk}
is different from that of \eqref{sf_bv}.
We use the same dot symbol to denote derivatives,
but there should be no ambiguity
since the different time scales can be distinguished by
comparing the term $\dot\xi$.

The limiting fast system for \eqref{sf_brk} is \beq{fast_brk}
  &\dot{\beta}= B_1(\beta)-\xi\beta r-w_1 r\\
  &\dot{r}= -r B_2(\beta)+ \xi r^2+ w_2r^3\\
  &\dot{w_1}=0,\;
  \dot{w_2}=0,\;
  \dot{\xi}=0,\;
  \dot{\kappa}=0.
\]
The obvious equilibria for \eqref{fast_brk},
besides $(\beta_L,r_L,w_{1L},w_{2L},s)$
and $(\beta_R,r_R,w_{1L},w_{2L},s)$,
where $r_L=1/v_L$ and $r_R=1/v_R$,
are \begin{align}
  &\mathcal P_L= \{(\beta,r,w_1,w_2,\xi,\kappa): \beta=\rho_1, r=0\},\label{def_PL}\\
  &\mathcal P_R= \{(\beta,r,w_1,w_2,\xi,\kappa): \beta=\rho_2, r=0\}.\label{def_PR}
\end{align} The limiting slow system on $\mathcal P_L$ is \beq{slow_PL}
  w_1'= 0,\;
  w_2'= -1,\;
  \xi'= 0,\;
  \kappa'= B_2(\rho_1),
\] and on $\mathcal P_R$ is \beq{slow_PR}
  w_1'= 0,\;
  w_2'= -1,\;
  \xi'= 0,\;
  \kappa'= B_2(\rho_2)
\] The Fenichel coordinates near $\mathcal P_L$ can be described as follows.

\begin{prop}\label{prop_pl}
Let $W^{u,s}_\epsilon(\mathcal P_L)$ be the $C^k$ unstable/stable manifolds
of $\mathcal P_L$ for \eqref{sf_brk}, $k\ge 1$.
Then there exists a $C^k$ function
${\hat\beta}= {\hat\beta}(\beta,r,w_1,w_2,\xi,\kappa,\epsilon)$
such that \beq{tbeta0}
  {\hat\beta}= \beta
  \quad\text{when }r=0
\] and $({\hat\beta},r,w_1,w_2,\xi,\kappa)$
is a change of coordinates near $\mathcal P_L$ satisfying \begin{align}
  &W^s_\epsilon(\mathcal P_L)=
  \{
    ({\hat\beta},r,w_1,w_2,\xi,\kappa): {\hat\beta}= \rho_1
  \}\label{WsPL}\\
  &W^u_\epsilon(\mathcal P_L)=
  \{
    ({\hat\beta},r,w_1,w_2,\xi,\kappa): r= 0
  \}\label{WuPL}.
\end{align}
Moreover, the projection $\pi^s_{\epsilon,\mathcal P_L}$ into $\mathcal P_L$
along stable fibers with respective to \eqref{sf_brk} is \beq{pi_s_PL}
  \pi^s_{\epsilon,\mathcal P_L}(\rho_1,r,w_1,w_2,\xi,\kappa)
  = (\rho_1,0,w_1,w_2,\xi,\kappa)
\] in $({\hat\beta},r,w_1,w_2,\xi,\kappa)$-coordinates.
\end{prop}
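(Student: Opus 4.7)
The plan is to apply the Fenichel theory from Section \ref{subsec_fenichel} to the fast-slow system \eqref{sf_brk}, treating $(\beta,r)$ as fast variables and $(w_1,w_2,\xi,\kappa)$ as slow variables. First I would verify normal hyperbolicity of $\mathcal P_L$ for the limiting fast system \eqref{fast_brk}: a direct computation shows that the Jacobian of the right-hand side in $(\beta,r)$, evaluated at a point $(\rho_1,0,w_1,w_2,\xi,\kappa)\in\mathcal P_L$, is upper triangular with diagonal entries $B_1'(\rho_1)=(\rho_1-\rho_2)/\rho_1>0$ and $-B_2(\rho_1)=-(\rho_1-\rho_2)/(2\rho_1)<0$. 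These eigenvalues are uniformly bounded away from the imaginary axis on any bounded set in the slow variables, so $\mathcal P_L$ is a normally hyperbolic critical manifold with one-dimensional unstable and one-dimensional stable bundles.

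Next I would establish \eqref{WuPL} and \eqref{WsPL}. The crucial observation for the unstable manifold is that $\{r=0\}$ is \emph{exactly} invariant under \eqref{sf_brk} for every $\epsilon\ge 0$, since $\dot r=r\bigl[-B_2(\beta)+\xi r+w_2r^2\bigr]$ has $r$ as a factor. This five-dimensional invariant set contains $\mathcal P_L$ and is tangent to the unstable eigenspace along it, so by uniqueness of invariant manifolds $W^u_\epsilon(\mathcal P_L)=\{r=0\}$ locally, giving \eqref{WuPL}. For the stable manifold, Theorem \ref{thm_fenichel_invariant} provides a $C^k$ invariant $W^s_\epsilon(\mathcal P_L)$ that is $O(\epsilon)$-close to $W^s_0(\mathcal P_L)$. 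Because the stable eigendirection projects nontrivially onto the $r$-axis, $W^s_\epsilon(\mathcal P_L)$ can be written locally as a graph $\beta=\rho_1+\phi(r,w_1,w_2,\xi,\kappa,\epsilon)$ with $\phi(0,\cdot)\equiv 0$ (since $\mathcal P_L\subset W^s_\epsilon(\mathcal P_L)$). Setting $\hat\beta=\beta-\phi$ then gives \eqref{tbeta0} and \eqref{WsPL} at once, and $\partial_\beta\hat\beta\equiv 1$ ensures that $(\hat\beta,r,w_1,w_2,\xi,\kappa)$ is a valid $C^k$ coordinate change; formula \eqref{WuPL} is unaffected since only the $\beta$-coordinate has been modified.

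The part I expect to be the main obstacle is the projection formula \eqref{pi_s_PL}, which requires the stable fibers of $\mathcal P_L$ to appear in the new coordinates exactly as the curves $\{\hat\beta=\rho_1,\,(w_1,w_2,\xi,\kappa)=\text{const}\}$. To establish this I would invoke the full Fenichel normal-form machinery of Theorem \ref{thm_fenichel_coordinate}, producing coordinates $(a,b,c)$ in which the stable foliation of $W^s_\epsilon(\mathcal P_L)$ is straightened as $\{a=0,c=\text{const}\}$, together with the foliation statement of Theorem \ref{thm_foliation}. It remains to realize $a=\hat\beta-\rho_1$, $b=r$, and $c=(w_1,w_2,\xi,\kappa)$ simultaneously; the first two identifications are already achieved above, and for the third I would exploit the freedom in choosing Fenichel slow coordinates to absorb the residual $r$-dependence appearing in the slow equations on $\{\hat\beta=\rho_1\}$ into a higher-order modification of $\hat\beta$ that leaves \eqref{tbeta0} and \eqref{WsPL}--\eqref{WuPL} intact. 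Verifying that this modification is compatible with all three identifications simultaneously is the technical heart of the argument, and it reduces to a routine uniqueness-of-fibers statement for the normally hyperbolic manifold $\mathcal P_L$.
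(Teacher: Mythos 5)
Your treatment of everything up to and including \eqref{tbeta0}, \eqref{WsPL} and \eqref{WuPL} coincides with the paper's own proof: the paper computes the same triangular linearization \eqref{linear_PL} with eigenvalues $B_1'(\rho_1)>0$ and $-B_2(\rho_1)<0$, takes $W^u_\epsilon(\mathcal P_L)=\{r=0\}$ (your observation that $\{r=0\}$ is exactly invariant because $\dot r$ carries a factor $r$ is precisely why this choice is legitimate), and uses transversality of $\{\beta=\rho_1\}$ to $W^s_\epsilon(\mathcal P_L)$ to set $\hat\beta=a+\rho_1$ with $a=\beta-\rho_1+\phi\,r$, which is the same graph construction as your $\hat\beta=\beta-\phi$.

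The problem is the step you yourself flag as the ``technical heart,'' namely \eqref{pi_s_PL}. The proposed mechanism --- absorbing the residual $r$-dependence of the slow equations on $\{\hat\beta=\rho_1\}$ into a further modification of $\hat\beta$ --- cannot work: the equations $\dot w_1=-\epsilon\beta r$ and $\dot\xi=\epsilon r$ are unaffected by any redefinition of the fast coordinate, and for $\epsilon>0$ they genuinely depend on $r$ along $W^s_\epsilon(\mathcal P_L)$. Concretely, if $q\in W^s_\epsilon(\mathcal P_L)$ has $r$-coordinate $r_0>0$, then along its forward orbit $w_1$ tends to $w_1(q)-\epsilon\int_0^\infty\beta r\,dt\neq w_1(q)$, whereas every point of $\mathcal P_L$ has constant $w_1$ and the flow on $\mathcal P_L$ is a rigid translation; so the asymptotic base point is unique and its $(w_1,\xi)$-coordinates differ from those of $q$ by $O(\epsilon r_0)$. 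Hence no flow-invariant fibration can have fibers $\{\hat\beta=\rho_1,\,(w_1,w_2,\xi,\kappa)=\mathrm{const}\}$ when $\epsilon>0$, and the ``uniqueness-of-fibers'' argument you invoke in fact rules your identification out rather than establishing it. The statement that is actually needed downstream --- in \eqref{def_LambdaL}, \eqref{def_LambdaR}, Proposition \ref{prop_gamma0}, and the hypotheses of Theorem \ref{thm_EL} --- is the projection for the limiting system \eqref{fast_brk} ($\epsilon=0$), and there \eqref{pi_s_PL} follows immediately from your steps (i)--(iii) with no normal-form machinery at all: $(w_1,w_2,\xi,\kappa)$ are constants of the fast flow, so each stable fiber is the stable curve of the saddle $(\rho_1,0)$ inside a frozen $(\beta,r)$-slice, which in the $\hat\beta$-coordinates is exactly $\{\hat\beta=\rho_1,\,(w_1,w_2,\xi,\kappa)=\mathrm{const}\}$. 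So the fix is to delete the proposed modification and prove the fiber formula at $\epsilon=0$ (for $\epsilon>0$ one can only assert it up to the $O(\epsilon)$ accuracy furnished by Theorem \ref{thm_foliation}); this is also, in effect, what the paper's terse final sentence is doing with the Fenichel coordinates $b=r$, $a=\beta-\rho_1+\phi r$.
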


\begin{proof}
The linearization of \eqref{fast_brk} at $\mathcal P_L$
corresponds to the matrix \beq{linear_PL}
  \begin{pmatrix}
    B_1'(\rho_1)& -\xi \rho_1\\
    0& -B_2(\rho_1)
  \end{pmatrix}
  =\begin{pmatrix}
    1-\frac{\rho_2}{\rho_1}& -\xi \rho_1\\
    0& -\tfrac12\big(1-\frac{\rho_2}{\rho_1}\big)
  \end{pmatrix}
\] which has one positive and one negative eigenvalue.
Note that $\mathcal P_L$ is invariant under \eqref{sf_brk} for each $\epsilon$.
From Theorem \ref{thm_fenichel_invariant} and the remark following it,
${W}^s_\epsilon(\mathcal P_L)$ and ${W}^u_\epsilon(\mathcal P_L)$
are well defined and both have dimension $1$,
and we may take ${W}^u_\epsilon(\mathcal P_L)=\{r=0\}$.
Note that $\{\beta=\rho_1\}$ is transverse to ${W}^s_\epsilon(\mathcal P_L)$,
so we can choose Fenichel coordinates $(a,b,c)$
corresponding to this splitting with $b=r$ and \[
  a= \beta- \rho_1+ \phi(w_1,w_2,\xi,\kappa,\epsilon,r)r
\] for some $C^k$ function $\phi$.
Let ${\hat\beta}=a+\rho_1$.
Then the desired result follows.
\end{proof}

An analogous result holds for $\mathcal P_R$. We omit it here.

\subsection{Transversal Intersections} \label{subsec_transversal}
Fix small $r^0>0$ such that $\gamma_1$ intersects $\{r=r^0\}$
at a unique point. Denote this point by $p^\din_0$. That is, \beq{def_pin0}
  p^\din_0=\gamma_1\cap \{r=r^0\}.
\] We set \beq{def_Ieps}
  \mathcal I_\epsilon
  =W^u_\epsilon(\mathcal U_L)\cap \{r=r^0\}\cap V_1,
\] where $V_1$ is an open neighborhood of $p^\din_0$ such that
$\mathcal I_\epsilon$ can be parametrized as \beq{Ieps_para}
  \mathcal I_\epsilon
  &= \big\{
    ({\hat\beta},r,w_1,w_2,\xi,\kappa):
    r=r^0,\kappa=\epsilon\log(1/r^0),\\
    &\qquad (w_1,w_2,\xi)= (w_{1L},w_{2L},s)+ \alpha_1(-\beta_L,-v_L,1)
    + \epsilon \theta(a,\alpha_1,\epsilon),\\
    &\qquad |a|<\Delta_1,|\alpha_1|<\Delta_1
  \big\},
\] where the coordinates $({\hat\beta},r,w_1,w_2,\xi,\kappa)$
are defined in Proposition \ref{prop_pl}.
From \eqref{WsPL} we see that $\mathcal I_0$ and $W^s_0(\mathcal P_L)$
intersect transversally at $p^\din_0$,
and if we set \beq{def_LambdaL}
  \Lambda_L
  = \pi^s_{0,\mathcal P_L}
  \big(
    \mathcal I_0\cap W^s_0(\mathcal P_L)
  \big),
\] where $\pi^s_{0,\mathcal P_L}$ is the projection into $\mathcal P_L$
along stable fibers with respect to \eqref{fast_brk},
then \beq{LambdaL_para}
  \Lambda_L
  &= \big\{
    (\beta,r,w_1,w_2,\xi,\kappa):
    \beta=\rho_1,r=0,\kappa=0,\\
    &\qquad (w_1,w_2,\xi)= (w_{1L},w_{2L},s)+ \alpha_1(-\beta_L,-v_L,1),\\
    &\qquad |\alpha_1|<\Delta_1
  \big\}.
\] 

Similarly, by shrinking $r^0$ if necessary,
$\gamma_2$ intersects $\{r=r^0\}$ at a unique point \beq{def_pout0}
  p^\dout_0= \gamma_2\cap \{r=r^0\}.
\] Set \beq{def_Jeps}
  \mathcal J_\epsilon
  =W^s_\epsilon(\mathcal U_R)\cap \{r=r^0\}\cap V_2,
\] where $V_2$ is an open neighborhood of $p^\dout_0$
such that $\mathcal J_\epsilon$ has a parametrization analogous to \eqref{Ieps_para}.
Then $\mathcal J_0$ is transverse to $W^u_0(\mathcal P_R)$ at $p^\dout_0$,
and we set  \beq{def_LambdaR}
  \Lambda_R
  = \pi^u_{0,\mathcal P_R}
  \big(
    \mathcal J_0\cap W^u_0(\mathcal P_R)
  \big),
\] where $\pi^u_{0,\mathcal P_R}$ is the projection into $\mathcal P_R$
along unstable fibers with respect to \eqref{fast_brk}.

To connect $p^\din_0$ and $p^\dout$,
we have the following

\begin{prop}\label{prop_gamma0}
The system \eqref{fast_brk} has a trajectory \beq{def_gamma0}
  \gamma_0
  =\big\{
    (\beta,0,w_{1L},w_{20},s,\kappa_0):
    \beta\in (\rho_2,\rho_1)
  \big\},
\] which joins the points \[
  \pi^s_{\mathcal P_R}(p^\dout_0)\myflow{slow_PL}\tau_{10}\in \mathcal P_L
  \quad\text{and}\quad
  \pi^u_{\mathcal P_L}(p^\din_0)\myflow{slow_PR}(-\tau_{20})\in \mathcal P_R,
\] where \beq{def_kappa0}
  w_{20}
  = w_{2L}+ \frac{\rho_1}{\rho_1+\rho_2}(w_{2L}-w_{2R}),\quad
  \kappa_0
  = \frac{\rho_1(\rho_1-\rho_2)}{2\rho_2(\rho_1+\rho_2)},
\] and \beq{def_tau0}
  \tau_{10}
  =\frac{\rho_2}{\rho_1+\rho_2}(w_{2L}-w_{2R}),\quad
  \tau_{20}
  =\frac{\rho_1}{\rho_1+\rho_2}(w_{2L}-w_{2R}).
\] Moreover, if we set \beq{def_tLambda}
  {\widetilde\Lambda}_L
  = \Lambda_L\myflow{slow_PL} [\tau_{1-},\tau_{1+}]
  \quad\text{and}\quad
  {\widetilde\Lambda}_R
  = \Lambda_R\myflow{slow_PR} \big(-[\tau_{2-},\tau_{2_+}]\big),
\] where $\tau_{1-}<\tau_{10}<\tau_{1+}$ and $\tau_{2-}<\tau_{20}<\tau_{2+}$,
then $W^u_0({\widetilde\Lambda}_L)$ and $W^s_0({\widetilde\Lambda}_R)$
intersect transversally
along $\gamma_0$
in the space $\{r=0\}$.
\end{prop}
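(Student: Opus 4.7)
The strategy is to exploit the invariance of $\{r=0\}$ under \eqref{fast_brk}, to build $\gamma_0$ as an orbit of a scalar ODE on it, then to fix the free parameters by matching to the slow flows on $\mathcal P_L$ and $\mathcal P_R$, and finally to check transversality by a direct $5\times 5$ determinant computation. The existence step is immediate: on $\{r=0\}$ the system \eqref{fast_brk} reduces to $\dot\beta=B_1(\beta)$ with $w_1,w_2,\xi,\kappa$ constant, and since $B_1(\rho_1)=B_1(\rho_2)=0$, $B_1'(\rho_1)>0$, $B_1'(\rho_2)<0$, and $B_1(\beta)<0$ on $(\rho_2,\rho_1)$, there is a unique heteroclinic from $\beta=\rho_1$ to $\beta=\rho_2$ for any fixed choice of the four constants. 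Thus the set in \eqref{def_gamma0} is automatically an orbit, and only $w_{20},\kappa_0,\tau_{10},\tau_{20}$ remain to be pinned down.

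Next I would pin these down by matching the two limits of $\gamma_0$ with the time-$\tau_{10}$ (resp.\ time-$(-\tau_{20})$) slow-flow images of the Fenichel projections of $p^\din_0,p^\dout_0$ into $\mathcal P_L,\mathcal P_R$, which by Proposition \ref{prop_pl} and its $\mathcal P_R$-analog equal $(\rho_1,0,w_{1L},w_{2L},s,0)$ and $(\rho_2,0,w_{1R},w_{2R},s,0)$. The Rankine--Hugoniot identity \eqref{def_s} applied to the first components of \eqref{def_w} gives $w_{1L}=w_{1R}$, so the $w_1$-component is consistent with both sides. Since $\xi$ is constant under \eqref{slow_PL} and \eqref{slow_PR}, one also has $\xi=s$ on $\gamma_0$. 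Using $w_2'=-1$ and $\kappa'=B_2(\rho_i)$ on $\mathcal P_i$, the matching conditions on $w_2$ and $\kappa$ reduce to
\[
w_{20}=w_{2L}-\tau_{10}=w_{2R}+\tau_{20},\qquad
\kappa_0=B_2(\rho_1)\tau_{10}=-B_2(\rho_2)\tau_{20}.
\]
Because $B_2(\rho_1)>0>B_2(\rho_2)$ and $w_{2L}>w_{2R}$ by $\mathrm{(H2)}$, this linear system has a unique positive solution, and solving it recovers the explicit expressions in \eqref{def_kappa0}--\eqref{def_tau0}.

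For the transversality of $W^u_0(\widetilde\Lambda_L)$ and $W^s_0(\widetilde\Lambda_R)$ along $\gamma_0$ inside the 5-dimensional ambient $\{r=0\}$, I would compute the two tangent spaces explicitly. Each $\widetilde\Lambda_{L,R}$ is a 2-parameter family with tangent generators read off from the $\alpha_i$-parametrization of $\Lambda_{L,R}$ in \eqref{LambdaL_para} and from the slow vector fields \eqref{slow_PL}, \eqref{slow_PR}; the corresponding unstable/stable manifold contributes the fast-fiber direction $\partial_\beta$, giving two 3-dimensional tangent spaces. Arranging the five generating vectors as rows of a $5\times 5$ matrix in $(\beta,w_1,w_2,\xi,\kappa)$-coordinates and evaluating the determinant by two cofactor expansions yields $(\beta_L-\beta_R)(B_2(\rho_1)-B_2(\rho_2))$ up to sign. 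Both factors are nonzero, the first by hypothesis and the second because $B_2(\rho_1)$ and $B_2(\rho_2)$ have opposite signs, so the two tangent spaces span all of $T_{\gamma_0}\{r=0\}$ and transversality holds. The main obstacle is this last tangent-space bookkeeping: one has to carry the $\alpha_i$ and $\tau$ derivatives through the Fenichel coordinate change of Proposition \ref{prop_pl} and verify that the resulting determinant indeed factors so cleanly.
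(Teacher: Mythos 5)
Your proposal is correct and takes essentially the same route as the paper: on the invariant set $\{r=0\}$ the system reduces to $\dot\beta=B_1(\beta)$ (with $w_1,w_2,\xi,\kappa$ frozen), the constants are fixed by the matching conditions $w_{20}=w_{2L}-\tau_{10}=w_{2R}+\tau_{20}$ and $\kappa_0=B_2(\rho_1)\tau_{10}=-B_2(\rho_2)\tau_{20}$, and transversality is read off from the explicit parametrizations \eqref{Wu_tLambdaL}--\eqref{Ws_tLambdaR}, using $w_{1L}=w_{1R}$ (which you rightly make explicit via \eqref{def_s}). The only difference is cosmetic: where the paper simply lists the six tangent vectors at $q_0$ and observes they span $\{r=0\}$, you evaluate the resulting determinant, obtaining the nonzero factor $(\beta_L-\beta_R)\bigl(B_2(\rho_1)-B_2(\rho_2)\bigr)$, which is a harmless elaboration of the same argument.
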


\begin{proof} Note that the restriction of the system \eqref{fast_brk}
on $\{r=0\}$ is simply $\dot\beta=B_1(\beta)$,
so every trajectory of \eqref{fast_brk} joins $\{\beta=\rho_1\}$ and $\{\beta=\rho_2\}$.
Also note that \[
  &\pi^s_{\mathcal P_R}(p^\dout_0)\myflow{slow_PL}\tau
  = (\rho_1,0,w_{1L},w_{2L},s,0)+ \tau (0,0,-1,0,B_2(\rho_1))\\
  &\pi^u_{\mathcal P_L}(p^\dout_0)\myflow{slow_PR}\tau
  =(\rho_2,0,w_{1R},w_{2R},s,0)
  + \tau (0,0,0,-1,0,B_2(\rho_2)),\; \forall\; \tau\in \mathbb R,
\] in $(\beta,r,w_1,w_2,\xi,\kappa)$-coordinates.
Hence $\gamma_0$ defined in \eqref{def_gamma0}
joins $\pi^s_{\mathcal P_R}(p^\dout_0)\myflow{slow_PL}\tau_{10}$
and $\pi^u_{\mathcal P_L}(p^\dout_0)\myflow{slow_PR}(-\tau_{20})$
if \beq{def_tau0_tmp}
  w_{20}= w_{2L}- \tau_{10}= w_{2R}+\tau_{20},\quad
  \kappa_0= B_2(\rho_1)\tau_{10}= -B_2(\rho_2)\tau_{20},
\] which gives \eqref{def_kappa0} and \eqref{def_tau0}.

Let $\widetilde\Lambda_L$ and $\widetilde\Lambda_R$
be defined in \eqref{def_tLambda}.
From the parameterizations
\eqref{WuPL} and \eqref{LambdaL_para}, we have \beq{Wu_tLambdaL}
  &W^u_0(\widetilde\Lambda_L)
  = \big\{
    (\beta,r,w_1,w_2,\xi,\kappa):
    r=0,\\
    &(w_1,w_2,\xi,\kappa)= (w_{1L},w_{2L},s,0)+ \alpha_1(-\beta_L,-v_L,1,0)
    + \tau_1 (0,-1,0,B_2(\rho_1)),\\
    &\beta\in (\rho_2,\rho_1),|\alpha_1|<\Delta_1, \tau_1\in [\tau_{1-},\tau_{1+}]
  \big\}
\] and \beq{Ws_tLambdaR}
  &W^s_0(\widetilde\Lambda_R)
  = \big\{
    (\beta,r,w_1,w_2,\xi,\kappa):
    r=0,\\
    &(w_1,w_2,\xi,\kappa)= (w_{1R},w_{2R},s,0)+ \alpha_1(-\beta_R,-v_R,1,0)
    - \tau_2 (0,-1,0,B_2(\rho_2)),\\
    &\beta\in (\rho_2,\rho_1),|\alpha_2|<\Delta_1, \tau_2\in [\tau_{2-},\tau_{2+}
  \big\}.
\]
Fix any $q_0\in \gamma_0$, we have \[
  T_{q_0}W^u_0({\widetilde\Lambda}_L)
  = \mathrm{Span}\left\{
    \p 1\\ 0\\ 0\\ 0\\ 0\\ 0\pp,
    \p 0\\ 0\\ -\beta_L\\ -v_L\\ 1\\ 0\pp,
    \p 0\\ 0\\ 0\\ 1\\ 0\\ B_2(\rho_1)\pp
  \right\}
\] and \[
  T_{q_0}W^s_0({\widetilde\Lambda}_R)
  = \mathrm{Span}\left\{
    \p 1\\ 0\\ 0\\ 0\\ 0\\ 0\pp,
    \p 0\\ 0\\ -\beta_R\\ -v_R\\ 1\\ 0\pp,
    \p 0\\ 0\\ 0\\ 1\\ 0\\ B_2(\rho_2)\pp
  \right\}.
\] Hence $T_{q_0}W^u_0({\widetilde\Lambda}_L)$
and $T_{q_0}W^s_0({\widetilde\Lambda}_R)$
span the space $\{r=0\}$.
This means $W^u_0({\widetilde\Lambda}_L)$ and $W^s_0({\widetilde\Lambda}_R)$
intersect transversally in the space $\{r=0\}$ at $q_0$.
\end{proof}

Let $\gamma_0$ be the trajectory defined in Proposition \ref{prop_gamma0}.
We set \beq{def_q0}
  q_0= \gamma_0\cap \Gamma,
\] where  \beq{def_Gamma}
  \Gamma= \{(\beta,r,w_1,w_2,\xi,\kappa): \beta=\frac{\rho_1+\rho_2}2\}.
\] Then \beq{link_pq0}
  \pi^u_{\mathcal P_L}(q_0)
  =\pi^s_{\mathcal P_L}(p^\din_0)
  \myflow{slow_PL} \tau_{10},\quad
  \pi^s_{\mathcal P_R}(q_0)
  =\pi^u_{\mathcal P_R}(p^\dout_0)
  \myflow{slow_PR} (-\tau_{20}).
\] Let \beq{def_sigma1}
  \sigma_1
  &= \pi^s_{\mathcal P_L}(p^\din_0)\myflow{slow_PL} [0,\tau_{10}]\\
  &=\{(\rho_2,0,w_{1L},w_{2L}-\tau,s,B_2(\rho_1)\tau): \tau\in [0,\tau_{10}]\}
\] and \beq{def_sigma2}
  \sigma_2
  &= \pi^u_{\mathcal P_L}(p^\din_0)\myflow{slow_PR} [-\tau_{20},0]\\
  &=\{(\rho_1,0,w_{1R},w_{2R}+\tau,s,-B_2(\rho_2)\tau): \tau\in [0,\tau_{20}]\}
\] in $(\beta,r,w_1,w_2,\xi,\kappa)$-coordinates.
Then we obtain the singular configuration \beq{def_singular_config}
  \gamma_1\cup \sigma_1\cup \gamma_0
  \cup \sigma_2\cup \gamma_2
\] connecting $\mathcal U_L$ and $\mathcal U_R$.
See Fig \ref{fig_config}.

\begin{figure}\centering
\boxed{
\includegraphics[trim = 2cm 7.4cm 1.6cm 7.45cm, clip, width=.68\textwidth]{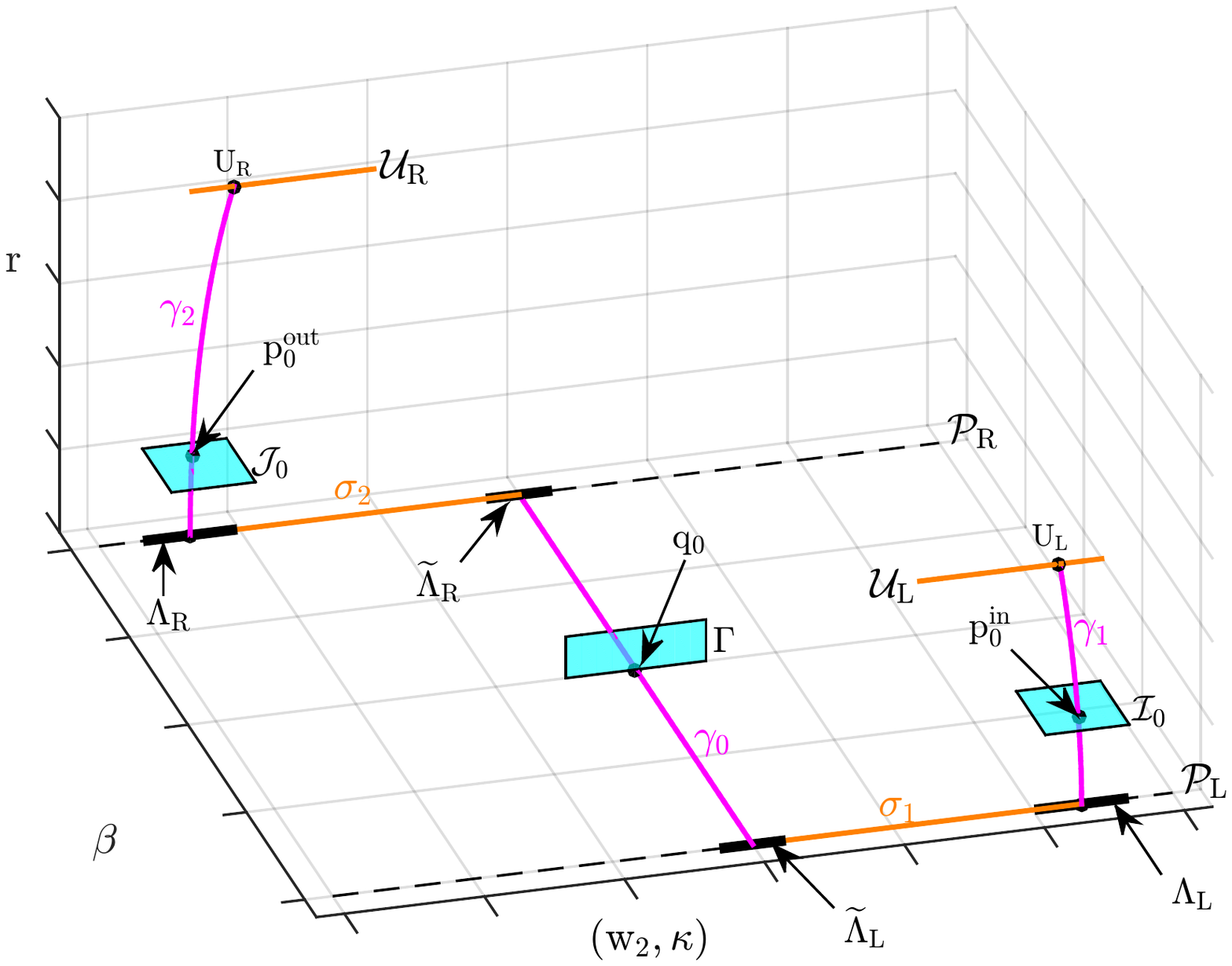}
}
\caption{The singular configuration
$\gamma_1\cup \sigma_1\cup \gamma_0\cup \sigma_2\cup \gamma_2$
connects $\mathcal U_L$ and $\mathcal U_R$.}
\label{fig_config}
\end{figure}

\section{Completing the Proof of the Main Theorem} 
\label{sec_complete}
We split the proof of the main theorem into two parts.
In the first subsection
we prove the existence of solutions of
the boundary value problem \eqref{dafermos_similar}, \eqref{bc_u_infty},
and show that \eqref{est_v_max} holds.
In the second subsection
we derive the weak limit \eqref{weak_limit}.

\subsection{Existence of Trajectories}
\label{subsec_existence}
\begin{prop}\label{prop_in_and_out}
Assume $\mathrm{(H1)}$-$\mathrm{(H3)}$.
Let $p^\din_0$, $p^\dout_0$, $q_0$,
$\mathcal I_\epsilon$, $\mathcal J_\epsilon$ and $\Sigma$
be defined in Section \ref{subsec_transversal}.
Then for each small $\epsilon>0$,
there exist $p^\din_\epsilon\in\mathcal I_\epsilon$,
$p^\dout_\epsilon\in \mathcal J_\epsilon$,
$q_\epsilon\in \Gamma$
and $T_{1\epsilon},T_{2\epsilon}>0$ such that \beq{pq_link}
  p^\din_\epsilon= q_\epsilon \cdot (-T_{1\epsilon}),\quad
  p^\dout_\epsilon= q_\epsilon \cdot T_{2\epsilon},
\] where $\cdot$ denotes the flow of \eqref{sf_brk}, satisfying \beq{limit_p0}
  (p^\din_\epsilon,p^\dout_\epsilon,q_\epsilon)
  = (p^\din_0,p^\dout_0,q_0)+ o(1)
\] and \beq{est_Teps}
  T_{1\epsilon}= \big(
    \tau_{10}+ o(1)
  \big)\epsilon^{-1},\quad
  T_{2\epsilon}= \big(
    \tau_{20}+ o(1)
  \big)\epsilon^{-1},
\] as $\epsilon\to 0$,
where $\tau_{10}$ and $\tau_{20}$ are defined in \eqref{def_tau0}.
Moreover, if we set $\kappa_\epsilon(\sigma)$
to be the $\kappa$-coordinate of $q_\epsilon\cdot\sigma$,
$\sigma\in [-T_{1\epsilon},T_{2\epsilon}]$,
then \beq{est_kappa}
  \max_{\sigma\in [-T_{1\epsilon},T_{2\epsilon}]}\kappa_\epsilon(\sigma)
  = \kappa_0+ o(1),
\] where $\kappa_0$ is defined in \eqref{def_kappa0}.
\end{prop}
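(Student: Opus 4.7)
The approach is to apply the Exchange Lemma (Theorem~\ref{thm_EL}) twice---forward to $\mathcal I_\epsilon$ through the critical manifold $\mathcal P_L$, and backward to $\mathcal J_\epsilon$ through $\mathcal P_R$---and then to combine the two $C^1$ approximations via an implicit function theorem argument, using the transverse intersection of Proposition~\ref{prop_gamma0} together with the slice $\Gamma$, to locate a unique point $q_\epsilon$ close to $q_0$. The Exchange Lemma will automatically deliver $p^\din_\epsilon$, $p^\dout_\epsilon$, and the asymptotics for $T_{1\epsilon}, T_{2\epsilon}$.

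\textbf{Applying the Exchange Lemma.} For $\mathcal I_\epsilon$ with critical manifold $\mathcal P_L$ and target exit point $\bar q_0 = q_0$, I verify (T1)--(T3) of Theorem~\ref{thm_EL}: (T1) follows from \eqref{Ieps_para} and Proposition~\ref{prop_pl}, with $\Lambda = \Lambda_L$ of dimension $\sigma - 1 = 1$; (T2) holds because the slow vector field \eqref{slow_PL} on $\mathcal P_L$ has nonzero $\kappa$-component whereas $\Lambda_L$ lies in $\{\kappa = 0\}$; (T3) is immediate from the linearity of \eqref{slow_PL}. The conclusion is that $\mathcal I_\epsilon^* := \mathcal I_\epsilon\cdot[0,\infty)$ is $C^1$ $O(\epsilon)$-close to $W^u_0(\widetilde\Lambda_L)$ in a neighborhood of $q_0$, together with a correspondence giving $T_{1\epsilon} = (\tau_{10} + o(1))\epsilon^{-1}$. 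A symmetric backward application yields the analogue for $\mathcal J_\epsilon^{**} := \mathcal J_\epsilon\cdot(-\infty,0]$ close to $W^s_0(\widetilde\Lambda_R)$ near $q_0$, with $T_{2\epsilon} = (\tau_{20} + o(1))\epsilon^{-1}$.

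\textbf{Locating $q_\epsilon$ by implicit function theorem.} Using the parametrizations \eqref{Wu_tLambdaL}--\eqref{Ws_tLambdaR} with $O(\epsilon)$ corrections, I parametrize $\mathcal I_\epsilon^*$ by $(a^1,\alpha_1,\tau_1)$ and $\mathcal J_\epsilon^{**}$ by $(a^2,\alpha_2,\tau_2)$ near $q_0$. Matching the six coordinates and appending the constraint $\beta = (\rho_1+\rho_2)/2$ from $\Gamma$ yields a system whose $\epsilon = 0$ solution is identified as follows: the $\xi$-equation forces $\alpha_1 = \alpha_2 =: \alpha$; the $w_1$-equation becomes $\alpha(\beta_L - \beta_R) = w_{1L} - w_{1R} = 0$ (the vanishing being the Rankine--Hugoniot identity from \eqref{def_s}--\eqref{def_w}), so $\alpha = 0$; the $w_2$- and $\kappa$-equations then give $\tau_i = \tau_{i0}$ by \eqref{def_tau0_tmp}; and $\Gamma$ fixes $a^1, a^2$. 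The Jacobian with respect to $(\alpha, \tau_1, \tau_2, a^2)$ at this $\epsilon = 0$ solution has determinant a nonzero multiple of $(\beta_L - \beta_R)(B_2(\rho_1) - B_2(\rho_2))$, so the implicit function theorem yields a unique perturbed solution $(\alpha_\epsilon, \tau_{1\epsilon}, \tau_{2\epsilon}, a^2_\epsilon) = (0, \tau_{10}, \tau_{20}, a^2_0) + o(1)$. This determines $q_\epsilon \in \Gamma$ with $q_\epsilon \to q_0$, and via the Exchange Lemma correspondence the points $p^\din_\epsilon, p^\dout_\epsilon$ and times $T_{1\epsilon}, T_{2\epsilon}$, completing \eqref{pq_link}--\eqref{est_Teps}.

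\textbf{The $\kappa$-maximum and principal obstacle.} For \eqref{est_kappa}, split $q_\epsilon\cdot[-T_{1\epsilon},T_{2\epsilon}]$ into a $\mathcal P_L$-segment (where $\dot\kappa \approx \epsilon B_2(\rho_1)>0$), a fast $O(1)$-length segment close to $\gamma_0$ (where $\kappa$ changes by $o(1)$), and a $\mathcal P_R$-segment (where $\dot\kappa \approx \epsilon B_2(\rho_2)<0$); monotonicity on the two slow pieces then forces the maximum to be attained near $\sigma = 0$ with value $\kappa_\epsilon(0) = \kappa_0 + o(1)$. The principal obstacle is the intersection step: the unperturbed intersection $W^u_0(\widetilde\Lambda_L)\cap W^s_0(\widetilde\Lambda_R)$ is the one-dimensional curve $\gamma_0$, not a point, because the transversality in Proposition~\ref{prop_gamma0} only holds within the degenerate subspace $\{r=0\}$; the role of $\Gamma$ is precisely to cut this family down to a single point, and the corresponding sub-Jacobian must be identified and shown to be nondegenerate explicitly.
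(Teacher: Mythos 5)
Your overall route is the same as the paper's: two applications of Theorem \ref{thm_EL} (forward through $\mathcal P_L$, backward through $\mathcal P_R$), a transversal matching near $q_0$ based on Proposition \ref{prop_gamma0} and the section $\Gamma$, the time asymptotics \eqref{est_Teps} read off from \eqref{est_Teps_EL}, and \eqref{est_kappa} obtained by comparing $\kappa_\epsilon$ with the singular configuration $\sigma_1\cup\gamma_0\cup\sigma_2$. Your explicit $4\times 4$ Jacobian, with determinant a nonzero multiple of $(\beta_L-\beta_R)\big(B_2(\rho_1)-B_2(\rho_2)\big)$ (nonzero since $B_2(\rho_1)>0>B_2(\rho_2)$ and $\beta_L\ne\beta_R$), is precisely the transversality content of Proposition \ref{prop_gamma0} combined with transversality of $\Gamma$ to $\gamma_0$, so that computation is sound and in fact makes explicit what the paper leaves implicit.

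The one genuine gap is in how the matching closes in the full six-dimensional $(\beta,r,w_1,w_2,\xi,\kappa)$-space. As you observe, the transversality of $W^u_0(\widetilde\Lambda_L)$ and $W^s_0(\widetilde\Lambda_R)$ holds only inside $\{r=0\}$; consequently an implicit-function system that literally matches all six coordinates is overdetermined (seven equations in the six unknowns $(a^1,\alpha_1,\tau_1,a^2,\alpha_2,\tau_2)$), while transversality in the full space is simply false, because both limiting tangent spaces at $q_0$ lie in the hyperplane $\{r=0\}$ and so cannot span $\mathbb R^6$. The section $\Gamma$ does not repair this: it removes only the tangential degeneracy along $\gamma_0$ (intersection curve versus point), not the degeneracy in the $r$-direction, so your closing paragraph attributes the fix to the wrong mechanism. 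What closes the argument, and what the paper's proof uses, is that $\kappa$ was defined by $\kappa=\epsilon\log(1/r)$, so the relation $r=e^{-\kappa/\epsilon}$ is invariant under \eqref{sf_brk} and holds identically on $\mathcal I_\epsilon^*$ and $\mathcal J_\epsilon^*$ (it holds on $\mathcal I_\epsilon$ and $\mathcal J_\epsilon$ by \eqref{Ieps_para} and its analogue). Hence the $r$-equation is implied by the $\kappa$-equation; one works with the projections into the five-dimensional space $\{r=0\}$, where your $4\times 4$ nondegeneracy (equivalently the paper's transversal intersection cut by $\Gamma$) yields a unique solution depending continuously on $\epsilon$, and the actual point $q_\epsilon\in\mathcal I_\epsilon^*\cap\mathcal J_\epsilon^*\cap\Gamma$ is then recovered through $r=e^{-\kappa/\epsilon}$. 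With that observation inserted, your argument coincides with the paper's proof; the remaining steps (\eqref{pq_link}, \eqref{limit_p0}, \eqref{est_Teps} via the Exchange Lemma correspondence, and the three-segment monotonicity argument for \eqref{est_kappa}) are correct and, for \eqref{est_kappa}, somewhat more detailed than the paper's one-line justification.
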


\begin{proof}
We will apply the Exchange Lemma (Theorem \ref{thm_EL}) with $(k,m,l,\sigma)=(1,1,3,1)$.
From \eqref{Ieps_para} and \eqref{WsPL},
we know that
the $(k+\sigma)$-manifold $\mathcal I_0$ is transverse to 
the $(m+l)$-manifold $W^s_0(\mathcal P_L)$ at $p^\din_0$,
and the image of the projection \[
  \pi^s_{\mathcal P_L}\big(
    \mathcal I_0\cap W^s_0(\mathcal P_L)
  \big)
  = \Lambda_L
\] is $\sigma$-dimensional,
so $\mathrm{(T1)}$ in the Exchange Lemma holds.
The limiting slow system on $\mathcal P_L$ is governed by \eqref{slow_PL},
and by the parametrization \eqref{LambdaL_para} of $\Lambda_L$
it follows that $\mathrm{(T2)}$ holds.
Also it is clear that $\mathrm{(T3)}$ holds with $\tau_0=\tau_{10}$,
where $\tau_{10}$ is defined in \eqref{def_tau0}.
Theorem \ref{thm_EL} implies that
there exists a neighborhood $V_0$ of $q_0$ such that \beq{closeness_PL}
  \text{
    $\mathcal I_\epsilon^*\cap V_0$
    is $C^1$ $O(\epsilon)$-close to $W^u_0({\widetilde\Lambda}_L)\cap V_0$,
  }
\] where $\mathcal I_\epsilon^*= \mathcal I_\epsilon\cdot [0,\infty)$.
Similarly, \beq{closeness_PR}
  \text{
    $\mathcal J_\epsilon^*\cap V_0$
    is $C^1$ $O(\epsilon)$-close to $W^s_0({\widetilde\Lambda}_R)\cap V_0$,
  }
\]
where $\mathcal J_\epsilon^*= \mathcal J_\epsilon\cdot (-\infty,0]$.
From Proposition \ref{prop_gamma0},
it follows that the projections of $\mathcal I_\epsilon^*$ and $\mathcal J_\epsilon^*$
in the $5$-dimensional space $\{r=0\}$
intersect transversally at a unique point in $\Gamma$ near $q_0$.
For the relation $r=\exp(-\kappa/\epsilon)$,
we then recover a unique intersection point \[
  q_\epsilon
  \in \mathcal I_\epsilon^*\cap \mathcal J_\epsilon^*\cap \Gamma
\] in $(\beta,r,w_1,w_2,\xi,\kappa)$-space.
By construction we have \eqref{pq_link} and \eqref{limit_p0}.
The estimates \eqref{est_Teps}
follows from \eqref{est_Teps_EL}.
Note that \[
  \max_{\sigma_1\cup \gamma_0\cup \sigma_2}\kappa= \kappa_0,
\] where $\sigma_1$, $\sigma_2$ and $\gamma_0$
are defined in Section \ref{subsec_transversal},
so we obtain \eqref{est_kappa}.
\end{proof}

\begin{prop}\label{prop_existence}
Assume $\mathrm{(H1)}$-$\mathrm{(H3)}$ hold.
Let $q_\epsilon=(\beta_\epsilon^0,r_\epsilon^0,
w_{1_\epsilon}^0,w_{2_\epsilon}^0,\xi_\epsilon^0,\kappa_\epsilon^0)
\in\Gamma$
be defined in Proposition \ref{prop_in_and_out}.
Let $v_\epsilon^0=\exp(\kappa_\epsilon^0/\epsilon)$ and \beq{def_tilde_beta}
  (\tilde{\beta}_\epsilon,\tilde{v}_\epsilon,
  \tilde{w_1}_\epsilon,\tilde{w_2}_\epsilon)(\xi)
  = (\beta_\epsilon^0,v_\epsilon^0,
  w_{1\epsilon}^0,w_{2\epsilon}^0)
  \myflow{dafermos_similar_xi}
  (\xi-\xi_\epsilon^0),
\] or equivalently, \beq{def_tilde_beta2}
  (\tilde{\beta}_\epsilon,\tilde{v}_\epsilon,
  \tilde{w_1}_\epsilon,\tilde{w_2}_\epsilon,\xi)
  = (\beta_\epsilon^0,v_\epsilon^0,
  w_{1\epsilon}^0,w_{2\epsilon}^0,\xi_\epsilon^0)
  \myflow{sf_bv} \left(\frac{\xi-\xi_\epsilon^0}\epsilon\right).
\] Then $(\tilde{\beta}_\epsilon,\tilde{v}_\epsilon)$
is a solution of \eqref{dafermos_similar} and \eqref{bc_u_infty},
and it satisfies \eqref{est_v_max}.
\end{prop}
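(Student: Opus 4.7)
The plan has three parts matching the three claims, and a single substantive issue at the end.

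First I would verify that $(\tilde\beta_\epsilon,\tilde v_\epsilon)$ solves \eqref{dafermos_similar}. By construction \eqref{def_tilde_beta2}, the map $t \mapsto q_\epsilon\cdot t$ is an orbit of \eqref{sf_bv}. Since the $\xi$-component of that system satisfies $\dot\xi=\epsilon$ with $\xi(0)=\xi_\epsilon^0$, the relabeling $\xi=\xi_\epsilon^0+\epsilon t$ is consistent, and under this relabeling \eqref{sf_bv} becomes \eqref{dafermos_similar_xi}, which is equivalent to \eqref{dafermos_similar}. Thus $(\tilde\beta_\epsilon,\tilde v_\epsilon)(\xi)$ is a smooth solution of \eqref{dafermos_similar} on all of $\mathbb{R}$.

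Next, to verify the boundary conditions \eqref{bc_u_infty}, I would use Proposition \ref{prop_in_and_out}: the point $p^\din_\epsilon=q_\epsilon\cdot(-T_{1\epsilon})$ lies on $\mathcal{I}_\epsilon\subset W^u_\epsilon(\mathcal{U}_L)$. Local invariance of $W^u_\epsilon(\mathcal{U}_L)$ under \eqref{sf_bv} implies that the backward orbit $\{q_\epsilon\cdot t:t\le -T_{1\epsilon}\}$ stays on this unstable manifold, and the exponential estimate \eqref{est_dist_ul} from Proposition \ref{prop_ul} then gives $\mathrm{dist}(q_\epsilon\cdot t,\mathcal{U}_L)\to 0$ as $t\to-\infty$. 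Every point of $\mathcal{U}_L$ has $u$-coordinate equal to $u_L$, and $\xi=\xi_\epsilon^0+\epsilon t\to-\infty$ as $t\to-\infty$, so $\tilde u_\epsilon(\xi)\to u_L$ as $\xi\to-\infty$. The mirror-image argument with $p^\dout_\epsilon\in W^s_\epsilon(\mathcal{U}_R)$ and the stable-side analogue of Proposition \ref{prop_ul} delivers $\tilde u_\epsilon(\xi)\to u_R$ as $\xi\to+\infty$.

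For \eqref{est_v_max}, the change of variables \eqref{sf_brk} satisfies $\kappa=\epsilon\log(1/r)=\epsilon\log v$ along any orbit of \eqref{sf_bv} with $v>0$, and continuity of $\tilde v_\epsilon$ together with the positive boundary limits $v_L,v_R>0$ from the previous step forces the orbit to stay in $\{v>0\}$. Hence $\epsilon\log\tilde v_\epsilon(\xi_\epsilon^0+\epsilon\sigma)=\kappa_\epsilon(\sigma)$ for all $\sigma\in\mathbb{R}$. On the middle segment $\sigma\in[-T_{1\epsilon},T_{2\epsilon}]$, the estimate \eqref{est_kappa} yields $\max_\sigma\kappa_\epsilon(\sigma)=\kappa_0+o(1)$. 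Outside this segment the Fenichel decay already invoked in Step 2 bounds $\tilde v_\epsilon$ uniformly, so $\epsilon\log\tilde v_\epsilon=o(1)$, which is dominated by $\kappa_0>0$ (positive by $\mathrm{(H2)}$ since $\kappa_0$ is proportional to $e_0$). Rewriting $\kappa_0$ using \eqref{def_kappa0} and \eqref{def_tau0} in terms of $e_0=w_{2L}-w_{2R}$ produces the explicit right-hand side of \eqref{est_v_max}.

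The only step where friction is expected is the passage to $\xi=\pm\infty$ in the boundary condition verification: the set $\mathcal{U}_L$ is a non-compact $\xi$-ray down to $-\infty$, whereas Fenichel's theorems are typically stated for compact invariant manifolds. I would have to lean explicitly on the uniform spectral gap $\xi-\mathrm{Re}(\lambda_\pm(u_L))<-\delta$ along $\mathcal{U}_L$, which is exactly what the remark after Proposition \ref{prop_ul} supplies, in order to extend \eqref{est_dist_ul} to an infinite backward-time interval and conclude the limit $\tilde u_\epsilon(\xi)\to u_L$. Everything else is bookkeeping built on Proposition \ref{prop_in_and_out} and the exponential decay estimates already in hand.
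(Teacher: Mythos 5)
Your proposal is correct and follows essentially the same route as the paper's proof: equivalence of \eqref{sf_bv}/\eqref{dafermos_similar_xi} with \eqref{dafermos_similar} for the first claim, membership of $p^\din_\epsilon\in\mathcal I_\epsilon\subset W^u_\epsilon(\mathcal U_L)$ and $p^\dout_\epsilon\in\mathcal J_\epsilon\subset W^s_\epsilon(\mathcal U_R)$ together with the decay estimates \eqref{est_dist_ul}, \eqref{est_dist_ur} for \eqref{bc_u_infty}, and the identity $\kappa=\epsilon\log v$ with \eqref{est_kappa} for \eqref{est_v_max}. Your extra observations (that outside $[-T_{1\epsilon},T_{2\epsilon}]$ one has $\epsilon\log\tilde v_\epsilon=o(1)$ so the global maximum is attained on the middle segment, and that the non-compactness of $\mathcal U_L$ is handled by the uniform spectral gap invoked in Proposition \ref{prop_ul}) are details the paper leaves implicit, not a different argument.
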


\begin{proof}
Since \eqref{dafermos_similar} and \eqref{dafermos_similar_xi} are equivalent,
and $(\tilde{\beta}_\epsilon,\tilde{v}_\epsilon,
\tilde{w_1}_\epsilon,\tilde{w_2}_\epsilon)(\xi)$
is a solution of \eqref{dafermos_similar_xi},
we know $(\tilde{\beta}_\epsilon,\tilde{v}_\epsilon)$
is a solution of \eqref{dafermos_similar}.

Let $T_{1\epsilon},T_{2\epsilon}\in\mathbb R$
be defined in Proposition \ref{prop_in_and_out}. Then \[
  q_\epsilon
  \underset{\eqref{sf_brk}}{\bullet} (-T_{1\epsilon}/\epsilon)\in \mathcal I_\epsilon,
  \quad
  q_\epsilon
  \underset{\eqref{sf_brk}}{\bullet} (T_{2\epsilon}/\epsilon)\in \mathcal J_\epsilon.
\] Since $\mathcal I_\epsilon\subset W_\epsilon^u(\mathcal U_L)$
and $\mathcal J_\epsilon\subset W_\epsilon^s(\mathcal U_R)$,
from \eqref{est_dist_ul} and \eqref{est_dist_ur}
we have \[
  \lim_{t\to -\infty}\mathrm{dist}(q_\epsilon\myflow{sf_brk}t,\mathcal U_L)=0,
  \quad
  \lim_{t\to \infty}\mathrm{dist}(q_\epsilon\myflow{sf_brk}t,\mathcal U_R)=0,
\] which implies \eqref{bc_u_infty}.
Since $\kappa_\epsilon=\epsilon\log(v_\epsilon)$,
from \eqref{est_kappa} we obtain \eqref{est_v_max}.
\end{proof}

\subsection{Convergence of Trajectories}
\label{subsec_convergence}
Based on the results in Proposition \ref{prop_in_and_out},
we first derive some estimates for the self-similar solution $\tilde u_\epsilon(\xi)$.

\begin{prop}\label{prop_convergence}
Let $\tilde u_\epsilon=(\tilde\beta_\epsilon,\tilde v_\epsilon)$
be the solution of \eqref{dafermos_similar} and \eqref{bc_u_infty}
in Proposition \ref{prop_existence}.
Let $p_\epsilon^\din$ and $p_\epsilon^\dout$
be defined in Proposition \ref{prop_in_and_out}.
Then \begin{align}
  &|\xi_\epsilon^\din-s|+|\xi_\epsilon^\dout-s|= o(1)
  \label{est_xi}\\
  &\int_{-\infty}^{\xi_\epsilon^\din}|\tilde{u}(\xi)-u_L|\;d\xi
  +\int_{\xi_\epsilon^\dout}^{\infty}|\tilde{u}(\xi)-u_R|\;d\xi = o(1)
  \label{est_int_outer}\\
  &\int_{\xi_\epsilon^\din}^{\xi_\epsilon^\dout}\tilde{u}(\xi)\;d\xi= (0,e_0)+ o(1)
  \label{est_v_inner}
\end{align}
as $\epsilon\to0$,
where $\xi_\epsilon^\din$ and $\xi_\epsilon^\dout$
are $\xi$-coordinates of $p_\epsilon^\din$ and $p_\epsilon^\dout$, respectively.
\end{prop}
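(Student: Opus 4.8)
The plan is to extract the three estimates from the detailed description of the singular trajectory in Proposition~\ref{prop_in_and_out}, transferring facts about the orbit $q_\epsilon\cdot\sigma$ of \eqref{sf_brk} over $\sigma\in[-T_{1\epsilon},T_{2\epsilon}]$ back to the self-similar profile $\tilde u_\epsilon(\xi)$ via the time--space identification $\xi = \xi_\epsilon^0 + \epsilon\sigma$. For \eqref{est_xi}: the endpoints $p_\epsilon^\din\in\mathcal I_\epsilon$ and $p_\epsilon^\dout\in\mathcal J_\epsilon$ converge by \eqref{limit_p0} to $p_0^\din\in\mathcal I_0$ and $p_0^\dout\in\mathcal J_0$, which lie on $\gamma_1$ and $\gamma_2$ respectively; since $\gamma_1,\gamma_2$ have $\xi$-coordinate identically $s$ (see \eqref{def_gamma}), the $\xi$-coordinates $\xi_\epsilon^\din$ and $\xi_\epsilon^\dout$ converge to $s$, giving \eqref{est_xi}.

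For \eqref{est_int_outer}: on the interval $\xi\le\xi_\epsilon^\din$ the trajectory lies in $\mathcal I_\epsilon\subset W^u_\epsilon(\mathcal U_L)$ in backward time, so by the Fenichel estimate \eqref{est_dist_ul} (via Proposition~\ref{prop_ul}), $\tilde u_\epsilon(\xi)$ decays to $u_L$ at an exponential rate in $(\xi_\epsilon^\din-\xi)/\epsilon$; integrating $Ce^{-\mu(\xi_\epsilon^\din-\xi)/\epsilon}$ from $-\infty$ to $\xi_\epsilon^\din$ yields a bound $O(\epsilon)=o(1)$. The symmetric argument with \eqref{est_dist_ur} handles the tail near $u_R$. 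Here one must be slightly careful that \eqref{est_dist_ul} is stated in the $\mathcal S_0$-distance and on the original $(\beta,v,w,\xi)$ variables rather than $r$; one passes through the coordinate change $r=1/v$ near the finite equilibria $(u_L,w_L,s)$, where $v$ is bounded away from $0$ and $\infty$, so the two metrics are comparable there.

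For \eqref{est_v_inner}, which is the main obstacle: I would split the inner integral according to the singular configuration $\gamma_1\cup\sigma_1\cup\gamma_0\cup\sigma_2\cup\gamma_2$. Writing $u=-\dot w/\epsilon$ from \eqref{sf_u}, one has $\int_{\xi_\epsilon^\din}^{\xi_\epsilon^\dout}\tilde u_\epsilon(\xi)\,d\xi = -\big(\tilde w_\epsilon(\xi_\epsilon^\dout)-\tilde w_\epsilon(\xi_\epsilon^\din)\big)$, i.e.\ the integral is exactly the total change in the slow variable $w$ across the inner layer. By \eqref{limit_p0} the endpoints converge to $p_0^\din$ and $p_0^\dout$, whose $w$-coordinates are $(w_{1L},w_{2L})$ and $(w_{1R},w_{2R})$ respectively (up to the $O(\epsilon)$ drift along the slow flow, which contributes $o(1)$ since $\xi_\epsilon^\din,\xi_\epsilon^\dout\to s$). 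Hence the integral converges to $-\big((w_{1R},w_{2R})-(w_{1L},w_{2L})\big) = (w_{1L}-w_{1R},\,w_{2L}-w_{2R})$. From the Rankine--Hugoniot relation \eqref{def_w}--\eqref{def_s} one has $w_{1L}=w_{1R}$ (the first component of $f(u)-su$ matches by the definition of $s$), so the first component vanishes, and the second component is precisely $e_0=w_{2L}-w_{2R}$ by \eqref{def_deficit}. This gives \eqref{est_v_inner}. The delicate point is justifying that the $w$-excursion along the fast pieces $\gamma_0,\gamma_1,\gamma_2$ (where $\xi=s$ is frozen and $\dot w=0$ at $\epsilon=0$) and along $\sigma_1,\sigma_2$ contributes only $o(1)$; this follows because on those segments $w$ is constant in the $\epsilon=0$ singular limit and the true trajectory is $O(\epsilon)$-close to it on compact pieces while the time spent is $O(1/\epsilon)$ only on $\sigma_1,\sigma_2$, where $\dot w = -\epsilon u$ with $u$ bounded, giving an $O(1)$ jump that is already accounted for in the endpoint $w$-values — so one must track the bookkeeping so as not to double-count, which is exactly where the telescoping $\int \tilde u = -\Delta\tilde w$ trick makes the argument clean.
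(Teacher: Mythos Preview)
Your treatments of \eqref{est_xi} and \eqref{est_int_outer} coincide with the paper's. For \eqref{est_v_inner} you take a genuinely different route: from $w_\xi=-u$ in \eqref{dafermos_similar_xi} you get the exact identity $\int_{\xi_\epsilon^\din}^{\xi_\epsilon^\dout}\tilde u_\epsilon\,d\xi=\tilde w_\epsilon(\xi_\epsilon^\din)-\tilde w_\epsilon(\xi_\epsilon^\dout)$, and then read the limit off from \eqref{limit_p0} together with the Rankine--Hugoniot identity $w_{1L}=w_{1R}$ (which indeed follows from \eqref{def_s}, \eqref{def_w}). This is correct and handles both components of $u$ at once. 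The paper instead treats the components separately: for $\tilde\beta_\epsilon$ it uses that $\tilde\beta_\epsilon$ is uniformly bounded and the interval has length $o(1)$ by \eqref{est_xi}; for $\tilde v_\epsilon$ it changes to the time variable $\zeta$ of \eqref{sf_brk}, where $d\xi=\epsilon\,\tilde r_\epsilon\,d\zeta$, so that $\int\tilde v_\epsilon\,d\xi=\int(1/\tilde r_\epsilon)\,\epsilon\tilde r_\epsilon\,d\zeta=\epsilon(T_{1\epsilon}+T_{2\epsilon})$, which converges to $\tau_{10}+\tau_{20}=e_0$ by \eqref{est_Teps}. Your telescoping argument is more conceptual and avoids the change of time scale; the paper's computation makes explicit the link between $e_0$ and the slow residence times $\tau_{10},\tau_{20}$ near $\mathcal P_L,\mathcal P_R$.

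Two small remarks. First, the identification $\xi=\xi_\epsilon^0+\epsilon\sigma$ you announce for the \eqref{sf_brk} flow is not right: in \eqref{sf_brk} one has $\dot\xi=\epsilon r$, not $\dot\xi=\epsilon$, so $\xi$ and $\sigma$ are related by $d\xi=\epsilon r\,d\sigma$ (this is precisely the substitution the paper exploits). The slip is harmless because your actual argument for \eqref{est_v_inner} never uses it. Second, the concern in your last paragraph about ``double-counting'' the $w$-excursion along the singular pieces is unnecessary: the identity $\int\tilde u_\epsilon\,d\xi=-\Delta\tilde w_\epsilon$ is exact for every $\epsilon>0$, so once you know the endpoint $w$-values via \eqref{limit_p0}, there is nothing further to justify.
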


\begin{proof}
Note that $s$ is the $\xi$-coordinate of $p_0^\din$, so \[
  |\xi_\epsilon^\din-s|
  \le |p_\epsilon^\din-p_0^\din|,
\] which tends to zero by \eqref{limit_p0}.
Similarly, $|\xi_\epsilon^\dout-s|\to 0$.
This gives \eqref{est_xi}.

Since every point in $\mathcal U_L$ has $u$-coordinate equal to $u_L$, \[
  |\tilde{u}(\xi)-u_L|
  \le \mathrm{dist}\big((\tilde u(\xi),\tilde w(\xi),\xi),\mathcal U_L\big)
  = \mathrm{dist}\big(
    (u_\epsilon^0,w_\epsilon^0,\xi_\epsilon^0)
    \myflow{sf_u}\frac{\xi-\xi_\epsilon^0}{\epsilon},
    \mathcal U_L
  \big),
\] where the last equality follows from \eqref{def_tilde_beta2}.
Using \eqref{est_dist_ul},
the last term is $\le C\exp\big(\nu\frac{\xi-\xi_\epsilon^0}{\epsilon}\big)$.
Since $\xi_\epsilon^\din<\xi_\epsilon^0$, it follows that \[
  \int_{-\infty}^{\xi_\epsilon^\din}|\tilde{u}(\xi)-u_L|\;d\xi
  &\le \int_{-\infty}^{\xi_\epsilon^\din}
    C\exp\big(\nu\frac{\xi-\xi_\epsilon^0}{\epsilon}\big)
  \;d\xi
  &\le \int_{-\infty}^{\xi_\epsilon^0}
    C\exp\big(\nu\frac{\xi-\xi_\epsilon^0}{\epsilon}\big)
  \;d\xi
  =\frac{\epsilon}\nu C,
\] A similar inequality holds for
$\int_{\xi_\epsilon^\dout}^{\infty}|\tilde{u}(\xi)-u_R|\;d\xi$,
so we obtain \eqref{est_int_outer}.

Since $\tilde\beta_\epsilon(\xi)$ is uniformly bounded in $\epsilon$,
its integral between $\xi_\epsilon^\din$ and $\xi_\epsilon^\dout$
is $o(1)$ by \eqref{est_xi},
and this proves the first part of \eqref{est_v_inner}.
From the equation of $\dot\xi$ in \eqref{sf_brk},
denoting the time variable by $\zeta$,
we can write $\xi=\xi(\zeta)$ by \beq{xi_de}
  \xi(0)=\xi_\epsilon^0,\quad
  \frac{d\xi}{d\zeta}=\epsilon \tilde r_\epsilon(\xi),
\] where $\tilde r_\epsilon(\xi)= 1/\tilde v_\epsilon(\xi)$.
From \eqref{pq_link} we have \beq{xi_bc}
  \xi(-T_{1\epsilon})=\xi_\epsilon^\din,\quad
  \xi(T_{2\epsilon})=\xi_\epsilon^\dout.
\] From \eqref{xi_de} and \eqref{xi_bc} it follows that \[
  \int_{\xi_\epsilon^\din}^{\xi_\epsilon^\dout}\tilde v(\xi)\;d\xi =
  \int_{\xi_\epsilon^\din}^{\xi_\epsilon^\dout} \frac1{\tilde{r}(\xi)}\;d\xi
  =\int_{-T_{1\epsilon}}^{T_{2\epsilon}} 
    \epsilon
  \;d\zeta
  = \epsilon(T_{1\epsilon}+ T_{2\epsilon}),
\] which converges to $w_{2L}-w_{2R}= e_0$ by \eqref{est_Teps}.
This proves \eqref{est_v_inner}.
\end{proof}

From the estimates in Proposition \ref{prop_convergence},
we can derive the weak convergence of $\tilde u(\xi)$ as follows.

\begin{prop}\label{prop_weak_limit_bv_xi}
Let $\tilde{u}_\epsilon=(\tilde\beta_\epsilon,\tilde v_\epsilon)$
be the solution of \eqref{dafermos_similar} and \eqref{bc_u_infty}
given in Proposition \ref{prop_existence}.
Then \beq{weak_limit_tilde_u}
  \tilde{u}_\epsilon\rightharpoonup u_L+ (u_R-u_L)\mathrm{H}(\xi-s)
  + (0,e_0)\delta_0(\xi-s)
\] in the sense of distributions as $\epsilon\to 0$.
\end{prop}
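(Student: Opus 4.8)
The plan is to test \eqref{weak_limit_tilde_u} against an arbitrary $\varphi\in C^\infty_c(\mathbb R)$ and to show that $\int_{\mathbb R}\tilde u_\epsilon(\xi)\varphi(\xi)\,d\xi$ converges, componentwise, to $\int_{-\infty}^s u_L\varphi(\xi)\,d\xi+\int_s^\infty u_R\varphi(\xi)\,d\xi+\varphi(s)(0,e_0)$, which is precisely the pairing of the right-hand side of \eqref{weak_limit_tilde_u} with $\varphi$ (here $\delta_0(\xi-s)$ is the Dirac mass at $\xi=s$, and $\tilde u_\epsilon\in L^1_{\mathrm{loc}}$ is viewed as a distribution via this integral pairing). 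The whole argument rests on the three estimates \eqref{est_xi}, \eqref{est_int_outer}, \eqref{est_v_inner} already established in Proposition \ref{prop_convergence}; no further analysis of the flow is needed.

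First I would split $\int_{\mathbb R}\tilde u_\epsilon\varphi\,d\xi$ into the three pieces over $(-\infty,\xi^\din_\epsilon)$, $(\xi^\din_\epsilon,\xi^\dout_\epsilon)$ and $(\xi^\dout_\epsilon,\infty)$. On the left piece I would write $\tilde u_\epsilon\varphi=(\tilde u_\epsilon-u_L)\varphi+u_L\varphi$; the first summand contributes at most $\|\varphi\|_\infty\int_{-\infty}^{\xi^\din_\epsilon}|\tilde u_\epsilon-u_L|\,d\xi=o(1)$ by \eqref{est_int_outer}, while $\int_{-\infty}^{\xi^\din_\epsilon}u_L\varphi\,d\xi$ differs from $\int_{-\infty}^{s}u_L\varphi\,d\xi$ by at most $|u_L|\,\|\varphi\|_\infty\,|\xi^\din_\epsilon-s|=o(1)$ by \eqref{est_xi} (using that $\varphi$ has compact support). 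Hence the left piece tends to $\int_{-\infty}^s u_L\varphi\,d\xi$, and the right piece tends to $\int_s^\infty u_R\varphi\,d\xi$ by the identical argument applied to the second halves of \eqref{est_int_outer} and \eqref{est_xi}.

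For the middle piece, \eqref{est_xi} shows that $(\xi^\din_\epsilon,\xi^\dout_\epsilon)$ shrinks to the single point $s$, so uniform continuity of $\varphi$ gives $\sup_{\xi\in(\xi^\din_\epsilon,\xi^\dout_\epsilon)}|\varphi(\xi)-\varphi(s)|=o(1)$. Moreover $\int_{\xi^\din_\epsilon}^{\xi^\dout_\epsilon}|\tilde u_\epsilon|\,d\xi$ stays bounded: its $\beta$-component is $o(1)$ since $\tilde\beta_\epsilon$ is uniformly bounded and the interval shrinks, and its $v$-component is $e_0+o(1)$ by \eqref{est_v_inner}. Therefore $\int_{\xi^\din_\epsilon}^{\xi^\dout_\epsilon}\tilde u_\epsilon\varphi\,d\xi=\varphi(s)\int_{\xi^\din_\epsilon}^{\xi^\dout_\epsilon}\tilde u_\epsilon\,d\xi+o(1)=\varphi(s)(0,e_0)+o(1)$, again by \eqref{est_v_inner}. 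Summing the three limits gives the desired pairing, and since $\varphi$ is arbitrary this establishes \eqref{weak_limit_tilde_u}.

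Because Proposition \ref{prop_convergence} does all the quantitative work, there is no real obstacle; the only point requiring care is the bookkeeping on the middle interval, where one must keep the $\beta$- and $v$-components separate (the former disappears in the limit, the latter produces the Dirac mass) and where replacing $\varphi$ by the constant $\varphi(s)$ is justified precisely by the uniform $L^1$ bound of $\tilde u_\epsilon$ over $(\xi^\din_\epsilon,\xi^\dout_\epsilon)$.
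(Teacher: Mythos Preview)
Your proof is correct and follows essentially the same approach as the paper's: split the integral at $\xi^\din_\epsilon$ and $\xi^\dout_\epsilon$, use \eqref{est_int_outer} and \eqref{est_xi} on the outer pieces, and use \eqref{est_v_inner} together with the shrinking of the middle interval to extract the Dirac mass. Your treatment is in fact slightly more explicit than the paper's (you spell out the passage from $\int_{-\infty}^{\xi^\din_\epsilon}u_L\varphi$ to $\int_{-\infty}^{s}u_L\varphi$ and the $L^1$ bound on the middle interval), but the two arguments are the same in substance.
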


\begin{proof}
Let $\psi\in C^\infty_c(\mathbb R)$ be a smooth function with compact support.
From \eqref{est_int_outer} we have \[
  \left|\int_{-\infty}^{\xi_\epsilon^\din} \psi(\xi)(\tilde{u}(\xi)-u_L)\;d\xi\right|
  \le \|\psi\|_{L^\infty}\int_{-\infty}^{\xi_\epsilon^\din}|\tilde{u}(\xi)-u_L|\;d\xi
  \le \|\psi\|_{L^\infty}C\epsilon,
\] which implies \[
  \int_{-\infty}^{\xi_\epsilon^\din}\psi(\xi)\tilde{u}(\xi)\;d\xi
  =\left(\int_{-\infty}^{\xi_\epsilon^\din}\psi(\xi)\;d\xi\right) u_L+ o(1)
  =\left(\int_{-\infty}^s\psi(\xi)\;d\xi\right) u_L+ o(1).
\] A similar inequality holds for $\int_{\xi_\epsilon^\dout}^\infty\psi u\;d\xi$,
so \beq{est_psi_outer}
  \int_{\mathbb R\setminus [\xi_\epsilon^\din,\xi_\epsilon^\dout]}
  \psi(\xi)\tilde{u}(\xi)\;d\xi
  =\left(\int_{-\infty}^s\psi(\xi)\;d\xi\right) u_L
  + \left(\int_s^{\infty}\psi(\xi)\;d\xi\right) u_R
  + o(1).
\] On the other hand, 
from \eqref{est_xi} and \eqref{est_v_inner} we have \[
    \int_{\xi_\epsilon^\din}^{\xi_\epsilon^\dout}
    \big|(\psi(\xi)-\psi(s)) \tilde{u}_\epsilon(\xi)\big|\;d\xi
    &\le \left( \max_{\xi\in [\xi_\epsilon^\din,\xi_\epsilon^\dout]}|\psi(\xi)-\psi(s)| \right)
    \left( \int_{\xi_\epsilon^\din}^{\xi_\epsilon^\dout} \tilde{u}_\epsilon(\xi)\;d\xi \right)\\
    &= o(1)\Big((0,e_0)+o(1)\Big)
    = o(1)
\] which implies \beq{est_psi_v_inner}
  \int_{\xi_\epsilon^\din}^{\xi_\epsilon^\dout}\psi(\xi) \tilde{u}_\epsilon(\xi)\;d\xi
  = \psi(s)\int_{\xi_\epsilon^\din}^{\xi_\epsilon^\dout} \tilde{u}_\epsilon(\xi)\;d\xi
  + o(1)
  =\psi(s)(0,e_0)+ o(1).
\] Combining \eqref{est_psi_outer} and \eqref{est_psi_v_inner}
we obtain \beq{est_psi_v}
  \int_{-\infty}^\infty \psi(\xi)\tilde{u}_\epsilon(\xi)\;d\xi
  = u_L\int_{-\infty}^{s}\psi(\xi)\;d\xi
  +u_R\int_{s}^\infty \psi(\xi)\,d\xi
  + (0,e_0)\psi(s)
  + o(1).
\]
This holds for all $\psi$, so we have \eqref{weak_limit_tilde_u}.
\end{proof}

Converting the results of Proposition \ref{prop_weak_limit_bv_xi}
from self-similar variables
to physical space variables,
we obtain the following

\begin{prop}\label{prop_weak_limit_bv}
Let $\tilde{u}_\epsilon=(\tilde\beta_\epsilon,\tilde v_\epsilon)$
be the solution of \eqref{dafermos_similar} and \eqref{bc_u_infty}
given in Proposition \ref{prop_existence}.
Let $u_\epsilon(x,t)=\tilde u_\epsilon(x/t)$.
Then the weak convergence \eqref{weak_limit_beta} and \eqref{weak_limit_v} holds.
\end{prop}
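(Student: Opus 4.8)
The plan is to transfer the weak limit already established in the self-similar variable $\xi$ (Proposition \ref{prop_weak_limit_bv_xi}) to the physical variables $(x,t)$ via the substitution $\xi=x/t$, paying attention to the normalizing weight $\sqrt{1+s^2}$ that enters the definition \eqref{def_delta} of $t\delta_{\{x=st\}}$.

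First I would fix a test function $\varphi\in C^\infty_c(\mathbb R\times\mathbb R^+)$, say with $\mathrm{supp}\,\varphi\subset[-M,M]\times[a,b]$ and $a>0$. For each fixed $\epsilon>0$ the solution $\tilde u_\epsilon$ is smooth, so $u_\epsilon$ is continuous on $\mathbb R\times\mathbb R^+$ and $\langle u_\epsilon,\varphi\rangle$ is an ordinary integral. Substituting $x=\xi t$ for each fixed $t>0$ and applying Fubini's theorem (the integrand being continuous and compactly supported), I would obtain
\[
  \langle u_\epsilon,\varphi\rangle
  = \int_0^\infty\!\!\int_{-\infty}^\infty \tilde u_\epsilon(x/t)\,\varphi(x,t)\;dx\,dt
  = \int_{-\infty}^\infty \tilde u_\epsilon(\xi)\,\psi(\xi)\;d\xi,
\]
where $\psi(\xi):=\int_0^\infty t\,\varphi(\xi t,t)\,dt$. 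Differentiation under the integral sign shows $\psi\in C^\infty(\mathbb R)$, and $\varphi(\xi t,t)$ vanishes unless $t\in[a,b]$ and $|\xi|\le M/a$, so $\psi$ has compact support. Hence Proposition \ref{prop_weak_limit_bv_xi} applies with this $\psi$, and letting $\epsilon\to0$ gives
\[
  \langle u_\epsilon,\varphi\rangle
  \;\longrightarrow\;
  u_L\int_{-\infty}^s\psi(\xi)\,d\xi
  + u_R\int_s^\infty\psi(\xi)\,d\xi
  + (0,e_0)\,\psi(s).
\]

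The remaining step is to undo the substitution in each term. Since $t>0$, the inequality $\xi<s$ is equivalent to $x<st$, so unfolding $\psi$ and changing variables back gives $\int_{-\infty}^s\psi(\xi)\,d\xi=\int_0^\infty\!\int_{-\infty}^{st}\varphi(x,t)\,dx\,dt=\langle 1-\mathrm{H}(x-st),\varphi\rangle$, and likewise $\int_s^\infty\psi(\xi)\,d\xi=\langle \mathrm{H}(x-st),\varphi\rangle$; whereas for the atomic part, by \eqref{def_delta}, $\psi(s)=\int_0^\infty t\,\varphi(st,t)\,dt=(1+s^2)^{-1/2}\langle t\delta_{\{x=st\}},\varphi\rangle$. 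Collecting the three contributions,
\[
  \langle u_\epsilon,\varphi\rangle
  \;\longrightarrow\;
  \Big\langle\, u_L+(u_R-u_L)\,\mathrm{H}(x-st)
  + \Big(0,\tfrac{e_0}{\sqrt{1+s^2}}\Big)\,t\delta_{\{x=st\}},\;\varphi\,\Big\rangle,
\]
and splitting $u=(\beta,v)$ into its two components yields precisely \eqref{weak_limit_beta} and \eqref{weak_limit_v}. The main thing requiring care is keeping the weight $\sqrt{1+s^2}$ straight, so that the coefficient of the delta comes out as $e_0/\sqrt{1+s^2}$ and not $e_0$; apart from that, the argument is a routine change of variables, with Fubini and the differentiation under the integral sign justified by the smoothness of $\varphi$ and the fact that its support is bounded away from $t=0$.
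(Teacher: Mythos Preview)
Your proof is correct and follows essentially the same route as the paper: substitute $x=\xi t$ and apply Proposition~\ref{prop_weak_limit_bv_xi}, then unwind the change of variables and identify the delta term via \eqref{def_delta}. The only difference is organizational --- you use Fubini to collapse the $(\xi,t)$-integral into a single test function $\psi(\xi)=\int_0^\infty t\,\varphi(\xi t,t)\,dt$ before invoking the weak limit, whereas the paper applies the limit inside the $t$-integral; your version is arguably cleaner on this point.
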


\begin{proof}
Let $\varphi\in C^\infty_c(\mathbb R\times \mathbb R_+)$.
From \eqref{weak_limit_tilde_u} we have  \[
  &\int_0^\infty\int_{-\infty}^\infty \varphi(x,t)u_\epsilon(x,t)\,dx\,dt
  =\int_0^\infty t\int_{-\infty}^\infty \varphi(t\xi,t)\tilde{u}_\epsilon(\xi)\,d\xi\,dt\\
  &=\int_0^\infty t\left\{
      u_L\int_{-\infty}^s \varphi(t\xi,t)\,d\xi
      + (0,e_0)\varphi(st,t)
      + u_R\int_s^\infty \varphi(t\xi,t)\,d\xi
  \right\}dt+ o(1)\\
  &= u_L\int_0^\infty\!\! \int_{-\infty}^{st}\varphi(x,t)\,dx\,dt
  +u_R\int_0^\infty\!\! \int_{st}^{\infty}\varphi(x,t)\,dx\,dt
  +(0,e_0)\int_0^\infty t\varphi(st,t)\,dt
  +o(1).
\] From \eqref{def_delta}, this means \eqref{weak_limit} holds.
\end{proof}

Now Propositions \ref{prop_existence} and \ref{prop_weak_limit_bv}
complete the proof of the Main Theorem.

\section{Numerical Simulations} \label{sec_simulation}
Some numerical solutions for (\ref{claw_bv})
using the Lax-Friedrichs scheme
are shown in Figure \ref{fig_LF}.
The solutions appear to grow unboundedly as the number of steps increases.

\begin{figure}[t]
\centering
\includegraphics[trim = 1cm 6.8cm 2cm 7cm, clip, width=.49\textwidth]{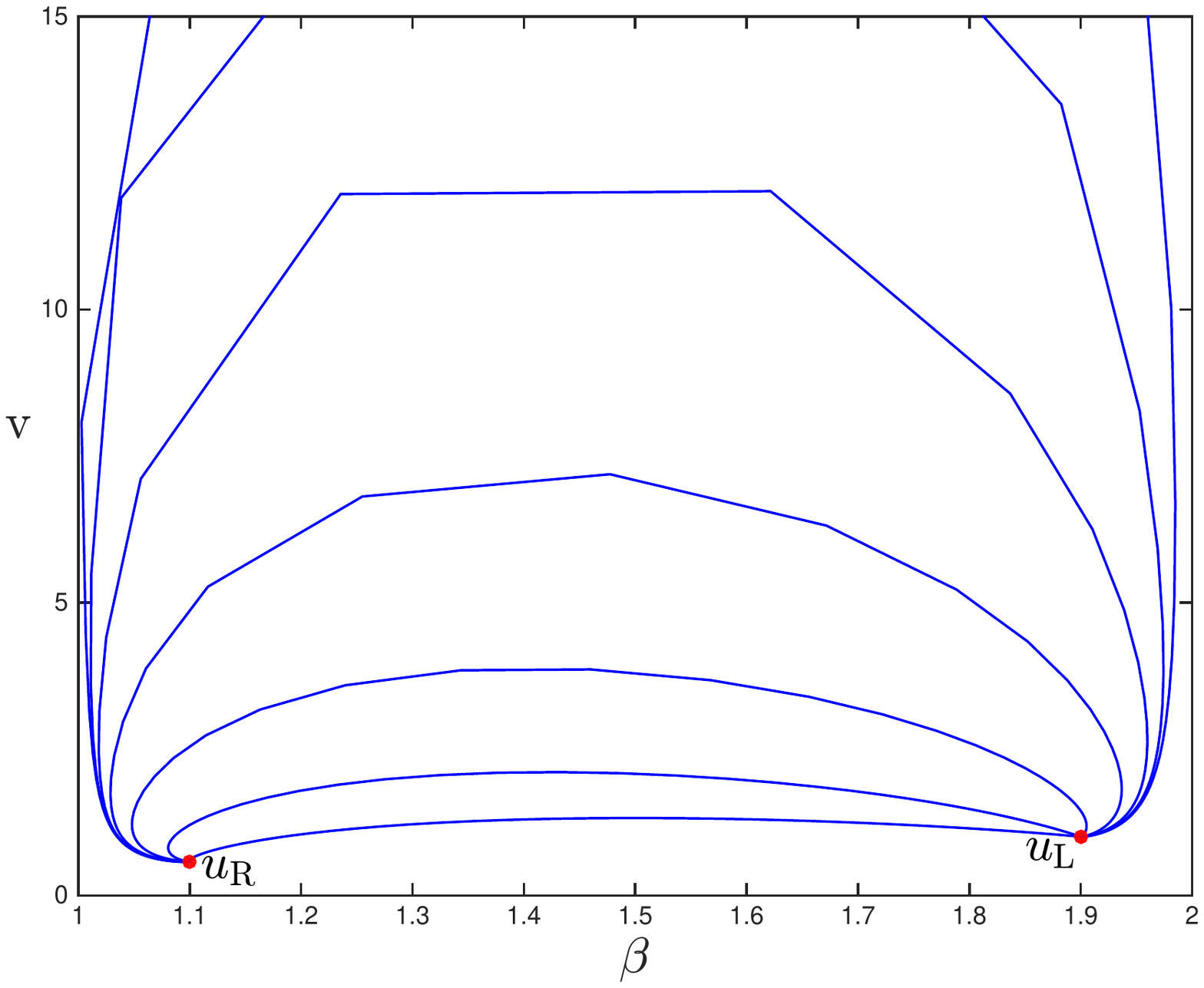}
\includegraphics[trim = 1cm 6.8cm 2cm 7cm, clip, width=.49\textwidth]{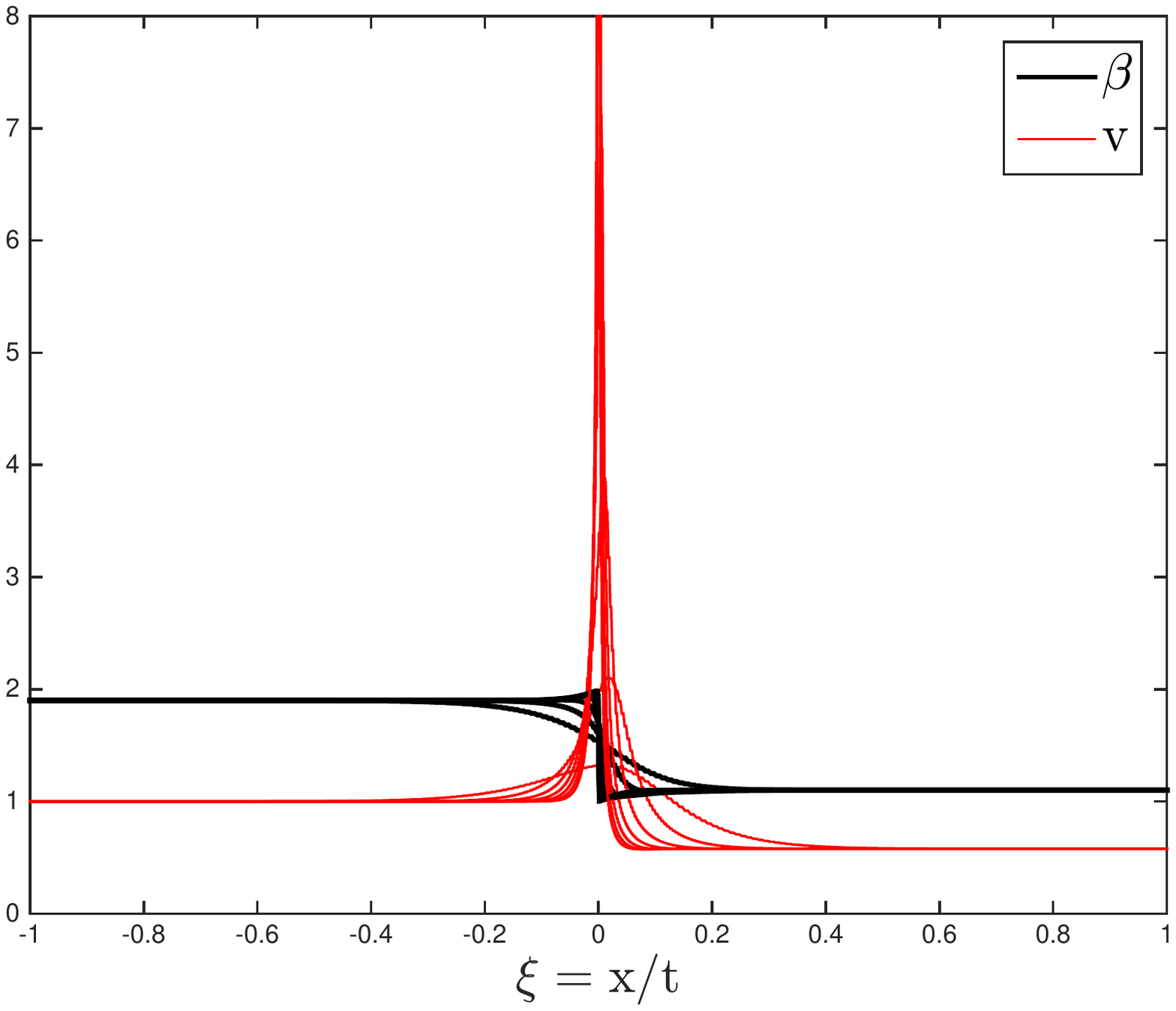}
\caption{Lax-Friedrichs Scheme up to $20,000$ steps with CFL$=0.05$}
\label{fig_LF}
\end{figure}

Also some numerical approximations for (\ref{dafermos_u})
are shown in Fig \ref{fig_shooting}.
The algorithm was a shooting method following the descriptions in \cite{Keyfitz:2003}.
Since $w_1$ and $\xi$ are essentially constant near the shock,
we project the trajectories in the $(\beta,r,w_2)$ space.
Note that $w_2(\xi)$ does not converge as $\xi\to \pm\infty$
while $x_2=w_2+(\xi-s) v$ converges,
we replace $w_2$ by $x_2$ (again, following \cite{Keyfitz:2003}).
Note that $x_2$ is a mild modification of $w_2$ near the shock
since within the $\epsilon$-neighborhood of $\xi=s$
the difference between $x_2$ and $w_2$
is of order $o(1)$.

As $\epsilon$ decreases,
the minimal value of $r$-coordinate on the trajectories
in Fig \ref{fig_shooting} tends to zero.
This means the maximum of $v$ tends to infinity.
Also observe that the change of the value of $x_2$
concentrates in the vicinity of $\beta=\rho_1$ and $\beta=\rho_2$.
This is consistent with our proof for the main theorem.

\begin{figure}[t]
\centering
{
\includegraphics[trim = 1cm 7.4cm 2cm 7.4cm, clip, width=.64\textwidth]{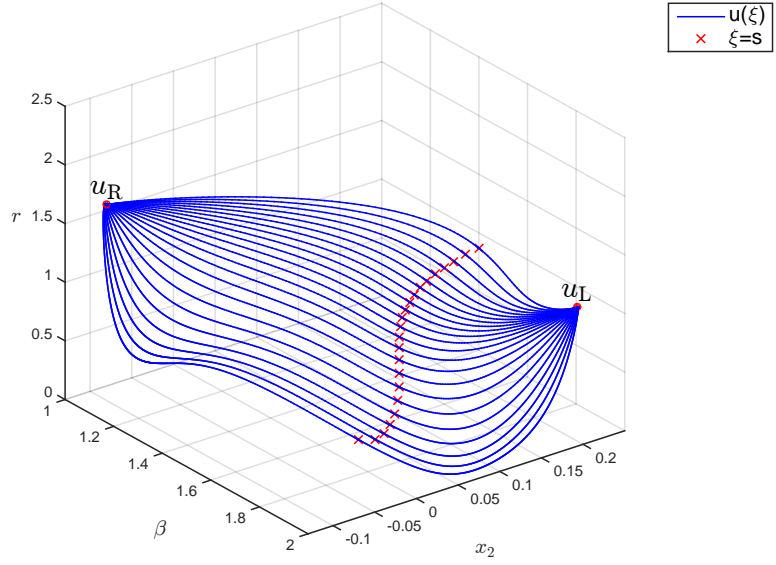}
}
\caption{Trajectories for $\epsilon u''=(Df(u)-s)u'$
as $\epsilon$ decreases from $1$ to $0.01$.
The variable $x_2$ is a modification of $w_2$.}
\label{fig_shooting}
\end{figure}

\section{Proof of Proposition \ref{prop_h3}} \label{sec_h3}

If the section we prove the sufficiency
of the conditions in Proposition \ref{prop_h3} for $\mathrm{(H3)}$,
which says that there is
a trajectory for \eqref{fast_bv}
connecting $(u_L,w_L,s)$ and $\{\beta=\rho_1,v=+\infty\}$,
and a trajectory connecting $(u_R,w_R,s)$ and $\{\beta=\rho_2,v=+\infty\}$.
We will focus on finding the first trajectory
while finding the second one is similar.

We will switch back and forth between $(\beta,v)$- and $(\beta,r)$-coordinates,
where $r=1/v$.
The system \eqref{fast_bv} is converted to \eqref{fast_brk} in $(\beta,r)$-coordinates.
It suffices to find trajectories connecting
$u_L=(\beta_L,r_L)$ and $p_L\equiv (\rho_1,0)$
for \eqref{fast_brk} with $(w,\xi)=(w_L,s)$.
From $\mathrm{(H1)}$ we know that $u_L$ is a source,
and we will also see that $p_L$ is a saddle.
Our strategy is to construct a negatively invariant region
in which every trajectory goes backward to $u_L$,
and one of those trajectories goes forward to $p_L$.
See Fig \ref{fig_h3_trapping}.

To construct a such region,
we first study the flow on the boundary
of the feasible region $\{\rho_2\le \beta\le \rho_1\}$.
The equation of $\dot\beta$ in \eqref{fast_bv} with $(w,\xi)=(w_L,s)$ is \[
  \dot{\beta}= -s\rho_1-w_{1L}\quad
  \text{ on}\;\; \{\beta=\rho_1\}.
\] Hence the proposition below implies that
the region $\{\beta\le \rho_1\}$ is negatively invariant
Similarly, for \eqref{fast_bv} with $(w,\xi)=(w_R,s)$,
the region $\{\beta\ge \rho_2\}$
is positively invariant.

\begin{lem}\label{lem_bdry_sign}
If $\mathrm{(H1)}$ holds,
then \beq{est_bdry}
  s\rho_1+w_{1L}<0
  \quad\text{and}\quad
  s\rho_2+w_{1R}<0,
\] where $s$, $w_{1L}$ and $w_{1R}$
are as defined in \eqref{def_s} and \eqref{def_w}.
\end{lem}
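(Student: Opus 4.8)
The plan is to reduce both inequalities in \eqref{est_bdry} to the convexity of $B_1$. First I would unwind \eqref{def_w}: since $f(u)=\bigl(vB_1(\beta),\,v^2B_2(\beta)\bigr)$, one has $w_{1L}=v_LB_1(\beta_L)-s\beta_L$ and $w_{1R}=v_RB_1(\beta_R)-s\beta_R$, so the two inequalities in \eqref{est_bdry} are equivalent to
\[
  v_LB_1(\beta_L)+s(\rho_1-\beta_L)<0
  \quad\text{and}\quad
  v_RB_1(\beta_R)+s(\rho_2-\beta_R)<0.
\]
The key algebraic observation is the identity $2B_2(\beta)=B_1'(\beta)$, since from \eqref{def_B} both equal $1-\rho_1\rho_2/\beta^2$. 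Consequently $\mathrm{Re}(\lambda_\pm(u))=2vB_2(\beta)=vB_1'(\beta)$ by \eqref{def_lambda}, so $\mathrm{(H1)}$ reads $v_RB_1'(\beta_R)<s<v_LB_1'(\beta_L)$. Finally $B_1''(\beta)=2\rho_1\rho_2/\beta^3>0$, so $B_1$ is strictly convex on $(0,\infty)$, and $B_1(\rho_1)=B_1(\rho_2)=0$.

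For the first inequality I would use the tangent-line bound for the convex function $B_1$ at $\beta_L$: since $\beta_L<\rho_1$, convexity gives $B_1'(\beta_L)(\rho_1-\beta_L)\le B_1(\rho_1)-B_1(\beta_L)=-B_1(\beta_L)$. Multiplying by $v_L>0$ and combining with $s<v_LB_1'(\beta_L)$ (from $\mathrm{(H1)}$) and $\rho_1-\beta_L>0$ yields $s(\rho_1-\beta_L)<v_LB_1'(\beta_L)(\rho_1-\beta_L)\le-v_LB_1(\beta_L)$, which is exactly $v_LB_1(\beta_L)+s(\rho_1-\beta_L)<0$. Symmetrically, for the second inequality I would apply the secant bound for $B_1$ between $\rho_2$ and $\beta_R$: since $\beta_R>\rho_2$, convexity gives $B_1(\beta_R)=B_1(\beta_R)-B_1(\rho_2)\le B_1'(\beta_R)(\beta_R-\rho_2)$; multiplying by $v_R>0$ and using $v_RB_1'(\beta_R)<s$ together with $\beta_R-\rho_2>0$ gives $v_RB_1(\beta_R)\le v_RB_1'(\beta_R)(\beta_R-\rho_2)<s(\beta_R-\rho_2)$, i.e.\ $v_RB_1(\beta_R)+s(\rho_2-\beta_R)<0$.

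I do not expect a genuine obstacle here; the content is a short computation once one spots the identity $2B_2=B_1'$, which recasts $\mathrm{(H1)}$ as a comparison of the shock speed $s$ with slopes of the convex function $B_1$. The one point requiring care is strictness: the argument uses $\rho_2<\beta_L<\rho_1$ and $\rho_2<\beta_R<\rho_1$ in an essential way — at $\beta_L=\rho_1$ or $\beta_R=\rho_2$ the left-hand sides in \eqref{est_bdry} vanish rather than being negative — but these strict inequalities hold for Riemann data in the interior of an over-compressive shock region (as is implicit throughout, and as one checks directly for the sample data \eqref{sample_data}). The non-strict inequalities produced by convexity then combine with the strict inequalities from $\mathrm{(H1)}$ to yield the strict conclusion.
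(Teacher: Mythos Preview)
Your argument is correct, and it is genuinely different from the paper's. The paper substitutes the explicit Rankine--Hugoniot formula \eqref{def_s} for $s$, invokes the boundary-curve inequality for $v_R$ from Proposition~\ref{prop_overcompressive}, and then carries out an explicit algebraic simplification using the concrete formulas for $B_1$ and $B_2$ to arrive at the closed-form bound $s\rho_1+w_{1L}\le -\rho_2(\rho_1-\beta_L)^2v_L/\beta_L^2$. Your route is shorter and more conceptual: once you observe $2B_2=B_1'$, hypothesis $\mathrm{(H1)}$ becomes $v_RB_1'(\beta_R)<s<v_LB_1'(\beta_L)$, and the two inequalities in \eqref{est_bdry} drop out of the tangent-line inequality for the strictly convex function $B_1$ at the points $\beta_L$ and $\beta_R$, together with $B_1(\rho_1)=B_1(\rho_2)=0$. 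You never need the explicit formula for $s$, the ordering $\beta_R<\beta_L$, or Proposition~\ref{prop_overcompressive}. The paper's computation, on the other hand, yields an explicit quantitative bound, which your approach does not directly produce. Your caveat about strictness is apt and applies equally to the paper's proof: both arguments require $\beta_L<\rho_1$ and $\beta_R>\rho_2$ strictly, which holds for data in the interior of the over-compressive region. A minor point of terminology: what you call the ``secant bound'' in the second half is really the same tangent-line inequality applied at $\beta_R$ (equivalently, monotonicity of difference quotients for convex functions); the mathematics is unaffected.
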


\begin{proof}
By definition of $s$ and $w_{1L}$ we have \[
  s\rho_1+w_{1L}
  &=s\rho_1
  +v_LB_1(\beta_L)-s\beta_L\\
  &=\frac{v_LB_1(\beta_L)-v_RB_1(\beta_R)}{\beta_L-\beta_R}(\rho_1-\beta_L)
  +v_LB_1(\beta_L).
\] From Proposition \ref{prop_overcompressive},
we know $\mathrm{(H1)}$ implies $\beta_R<\beta_L$ and \[
  v_R
  \le v_L\left(\frac{B_1(\beta_L)-2B_2(\beta_L)(\beta_L-\beta_R)}{B_1(\beta_R)}\right).
\] Since $B_1(\beta_L)<0$, it follows that \[
  s\rho_1+w_{1L}
  &\le \frac{\rho_1-\beta_L}{\beta_L-\beta_R}
  v_L\big(2B_2(\beta_L)(\beta_L-\beta_R)\big)
  + v_L B_1(\beta_L)\\
  &= \left(\frac{(\rho_1-\beta_L)(\beta_L^2-\rho_1\rho_2)}{\beta_L^2}
    +\frac{(\rho_1-\beta_L)(\rho_2-\beta_L)}{\beta_L}
  \right)v_L\\
  &=\frac{-\rho_2(\rho_1-\beta_L)^2v_L}{\beta_L^2}<0.
\]
Similarly, using $s\rho_2+w_{1R}=s\rho_2+v_RB_1(\beta_R)-s\beta_R$,
one obtains $s\rho_2+w_{1R}<0$.
\end{proof}

\begin{figure}[t]\centering
\fbox{
\includegraphics[trim = 3.4cm 8.2cm 2.4cm 7.6cm, clip, width=.44\textwidth]{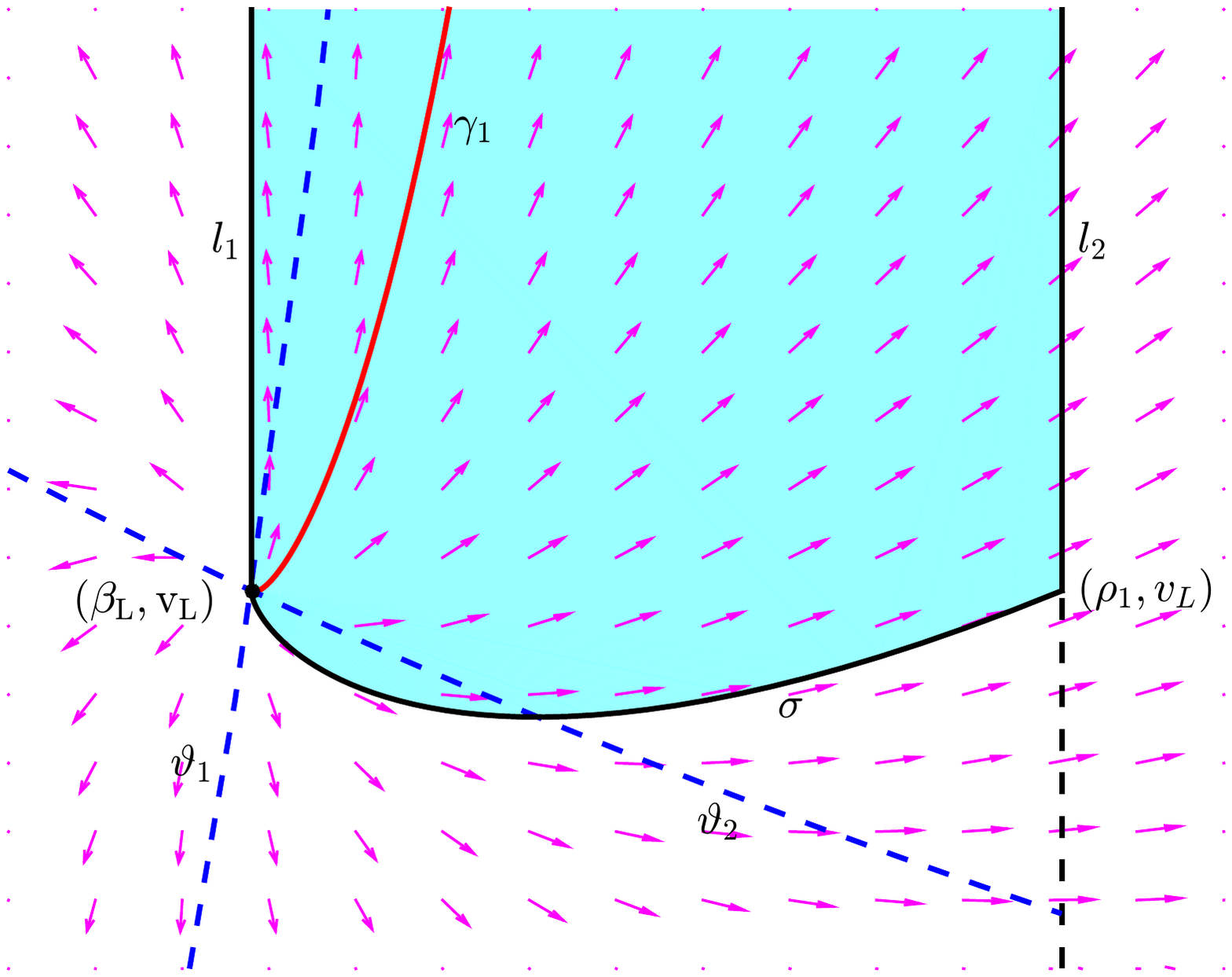}
}
\fbox{
\includegraphics[trim = 3.4cm 8.2cm 2.4cm 7.6cm, clip, width=.44\textwidth]{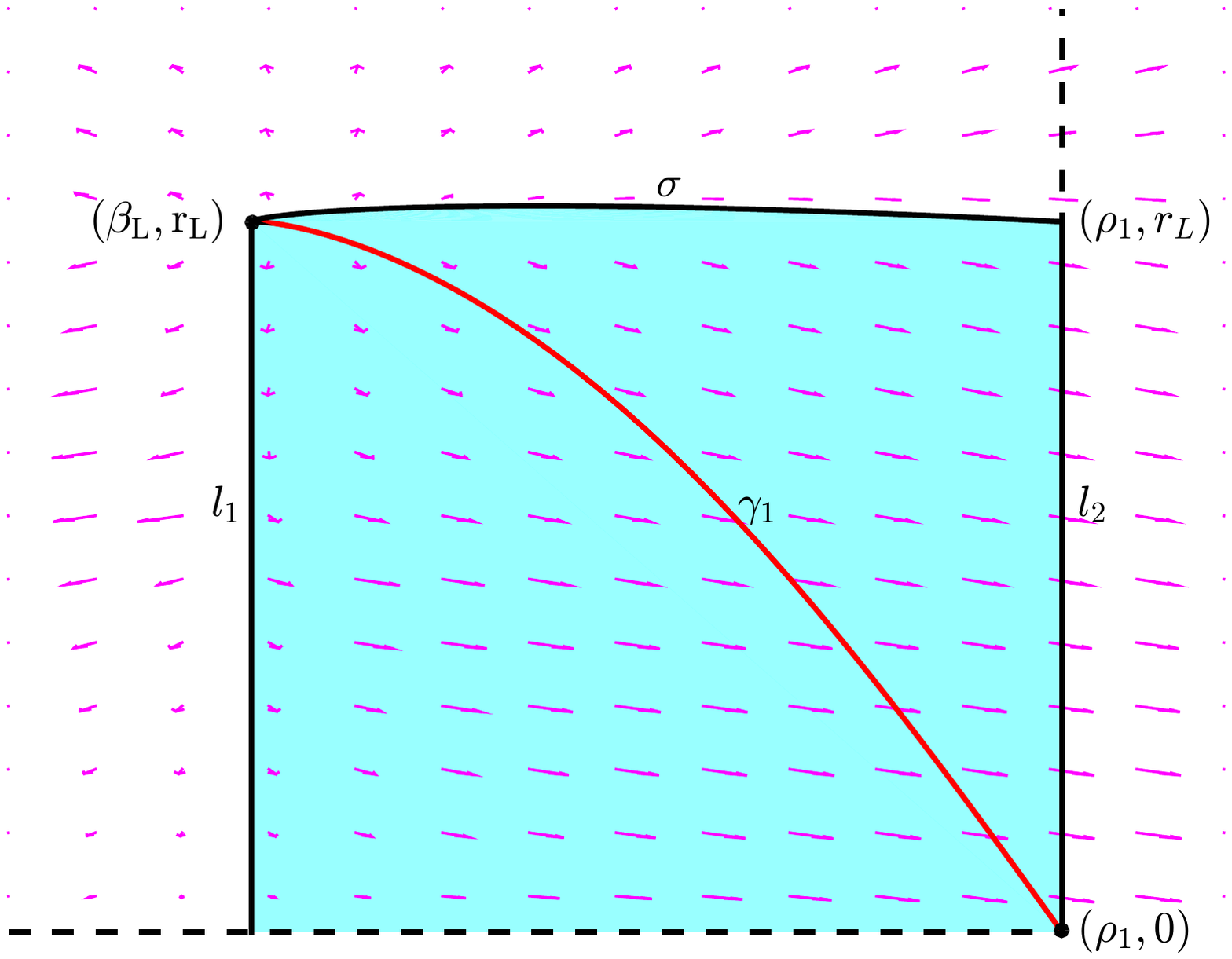}
}
\caption{
Phase portraits for \eqref{deq_fast_L} in $(\beta,v)$ space
and \eqref{deq_fast_L_r} in $(\beta,r)$ space.
The shaded region $V$ is a backward invariant region in which
every backward trajectory tends to $u_L$,
and $\gamma_1$ is the unique trajectory in $V$
which tends to $p_L=(\rho_1,0)$.}
\label{fig_h3_trapping}
\end{figure}

\newtheorem*{prop_h3_repeat}{Proposition \ref{prop_h3}}
\begin{prop_h3_repeat}
Suppose $\mathrm{(H1)}$ holds.
If $\beta_R<\sqrt{\rho_1\rho_2}<\beta_L$,
$w_{10}<0$, $w_{2R}<0<w_{2L}$,
and $|s|$ is sufficiently small,
then $\mathrm{(H3)}$ holds.
\end{prop_h3_repeat}

\begin{proof}
We focus on $u_L$ while the proof for $u_R$ is similar.
As mentioned at the beginning of this section and Fig \ref{fig_h3_trapping},
we first construct a negatively invariant region
in which every trajectory goes backward to $u_L$,
and then show that one of those trajectories goes forward to $p_L$.

Consider \eqref{fast_bv} with $(w,\xi)=(w_L,s)$.
That is, \beq{deq_fast_L}
  &\dot\beta= vB_1(\beta)-s\beta-w_{1L}\\
  &\dot v=v^2B_2(\beta)-sv-w_{2L}.
\]
The null-clines for this system are \begin{align}
  &\dot\beta=0:\quad
  v=\vartheta_1(\beta)
  :=\frac{\beta(s\beta+w_{1L})}{(\beta-\rho_1)(\beta-\rho_2)}\\
  &\dot{v}=0:\quad
  v=\vartheta_2(\beta)
  :=\frac{s\beta+\sqrt{s^2\beta^2+2w_{2L}(\beta^2-\rho_1\rho_2)}}{\beta^2-\rho_1\rho_2}
  \;\beta.
\end{align}
When $|s|$ is small, it can be readily seen
that $\vartheta_1$ is increasing and $\vartheta_2$ is decreasing
on the interval $(\sqrt{\rho_1\rho_2},\rho_1)$.
Also we have $\vartheta_2(\rho_1)>0$ since $w_{2L}>0$.

Let $\sigma(\tau)$ be the solution to \eqref{deq_fast_L}
with initial condition $\sigma(0)=(\rho_1,v_L)$.
By the monotonicity of $\vartheta_1$ and $\vartheta_2$,
we know that $\sigma$ hits the half-line $l_1= \{(\beta_L,v): v\ge v_L\}$
at some time $\tau_-<0$.
Let $l_2=\{(\rho_1,v): v\ge v_L\}$ and
$V$ be the region enclosed by the curves \[
  l_1\cup \{\sigma(\tau): \tau_-\le \tau\le 0\}\cup l_2.
\] Then $V$ forms a backward invariant region.
See Fig \ref{fig_h3_trapping}.

We claim that
$u_L$ attracts every point in $V$ in backward time.
Note that \[
  &\frac{\partial}{\partial\beta}(vB_1(\beta)-s\beta-w_{10})
  +\frac{\partial}{\partial v}(v^2B_2(\beta)-sv-w_{2L})\\
  &=vB_1'(\beta)-s+2vB_2(\beta)-s\\
  &=4vB_2(\beta)-2s,
\] which is positive when $\beta\in (\sqrt{\rho_1\rho_2},\rho_1)$
and $s$ is small.
In the last equality we used $B_1'(\beta)=2B_2(\beta)$.
By Bendixson's negative criterion, the system has no periodic orbit inside $V$.
Since $V$ is backward invariant and $u_L$ is the only equilibrium on the closure of $V$,
it follows from the Poincar\'e-Bendixson Theorem that
every trajectory in $V$ tends to $u_L$ in backward time.

It remains to show that there is a trajectory in $V$
tending to $\{\beta=\rho_1,v=\infty\}$.
Let $r=1/v$. Then \eqref{deq_fast_L} is converted to,
after multiplying by $r$, \beq{deq_fast_L_r}
  &\dot \beta= B_1(\beta)- s\beta r- w_{10}r\\
  &\dot r= -rB_2(\beta)+ sr^2+ w_{2L}r^3.
\] At the equilibrium $p_L=(\rho_1,0)$,
the eigenvalues of the linearized system are
$\lambda_+=1-\frac{\rho_2}{\rho_1}$
and $\lambda_-=\frac{-1}2(1-\frac{\rho_2}{\rho_1})$,
and the corresponding eigenvectors are $y_+=(1,0)$
and $y_-=(\frac{2\rho_1(w_{1L}+s\rho_1)}{3(\rho_1+\rho_2)},1)$,
so $p_L$ is a hyperbolic saddle,
and hence there exists a trajectory, denoted by $\gamma_1$,
which tends to $p_L$.
The trajectory of $\gamma_1$ is tangent to the line $\{p_L+ty_-:t\in\mathbb R\}$ at $p_L$.
Since $s\rho_1+w_{1R}<0$ by Lemma \ref{lem_bdry_sign},
we know $p_L+ty_1$, $t\ge 0$, lies in the region $\{\beta<\rho_1,r\ge 0\}$.
Therefore, converting \eqref{deq_fast_L_r} back to \eqref{deq_fast_L},
the solution converted from $\gamma_1(\tau)$,
also denoted by $\gamma_1(\tau)$, lies in $V$.
Now we conclude that
$\gamma_1(\tau)$ approaches $\{\beta=\rho_1,v=\infty\}$ in forward time
and approaches $u_L$ in backward time.
\end{proof}

\section*{Acknowledgments}
The author wishes to thank Professor Maciej Krupa
for his fruitful comments.
Also the author wishes to express his gratitude to
his advisor, Professor Barbara L.\ Keyfitz,
for her constant patience and encouragement,
and for the inspiration he received
from their many stimulating conversations,
all of which contributed to the completion of this work.


\end{document}